\begin{document}
\newcommand {\emptycomment}[1]{} 

\baselineskip=14pt
\newcommand{\nc}{\newcommand}
\newcommand{\delete}[1]{}
\nc{\mfootnote}[1]{\footnote{#1}} 
\nc{\todo}[1]{\tred{To do:} #1}

\delete{
\nc{\mlabel}[1]{\label{#1}}  
\nc{\mcite}[1]{\cite{#1}}  
\nc{\mref}[1]{\ref{#1}}  
\nc{\meqref}[1]{\ref{#1}} 
\nc{\mbibitem}[1]{\bibitem{#1}} 
}

\nc{\mlabel}[1]{\label{#1}  
{\hfill \hspace{1cm}{\bf{{\ }\hfill(#1)}}}}
\nc{\mcite}[1]{\cite{#1}{{\bf{{\ }(#1)}}}}  
\nc{\mref}[1]{\ref{#1}{{\bf{{\ }(#1)}}}}  
\nc{\meqref}[1]{\eqref{#1}{{\bf{{\ }(#1)}}}} 
\nc{\mbibitem}[1]{\bibitem[\bf #1]{#1}} 

\newtheorem{thm}{Theorem}[section]
\newtheorem{lem}[thm]{Lemma}
\newtheorem{cor}[thm]{Corollary}
\newtheorem{pro}[thm]{Proposition}
\theoremstyle{definition}
\newtheorem{defi}[thm]{Definition}
\newtheorem{ex}[thm]{Example}
\newtheorem{rmk}[thm]{Remark}
\newtheorem{pdef}[thm]{Proposition-Definition}
\newtheorem{condition}[thm]{Condition}

\renewcommand{\labelenumi}{{\rm(\alph{enumi})}}
\renewcommand{\theenumi}{\alph{enumi}}

\nc{\tred}[1]{\textcolor{red}{#1}}
\nc{\tblue}[1]{\textcolor{blue}{#1}}
\nc{\tgreen}[1]{\textcolor{green}{#1}}
\nc{\tpurple}[1]{\textcolor{purple}{#1}}
\nc{\btred}[1]{\textcolor{red}{\bf #1}}
\nc{\btblue}[1]{\textcolor{blue}{\bf #1}}
\nc{\btgreen}[1]{\textcolor{green}{\bf #1}}
\nc{\btpurple}[1]{\textcolor{purple}{\bf #1}}

\nc{\ld}[1]{\textcolor{blue}{Landry:#1}}
\nc{\cm}[1]{\textcolor{red}{Chengming:#1}}
\nc{\nh}[1]{\textcolor{purple}{Norbert:#1}}
\nc{\lir}[1]{\textcolor{blue}{Li:#1}}


\nc{\twovec}[2]{\left(\begin{array}{c} #1 \\ #2\end{array} \right )}
\nc{\threevec}[3]{\left(\begin{array}{c} #1 \\ #2 \\ #3 \end{array}\right )}
\nc{\twomatrix}[4]{\left(\begin{array}{cc} #1 & #2\\ #3 & #4 \end{array} \right)}
\nc{\threematrix}[9]{{\left(\begin{matrix} #1 & #2 & #3\\ #4 & #5 & #6 \\ #7 & #8 & #9 \end{matrix} \right)}}
\nc{\twodet}[4]{\left|\begin{array}{cc} #1 & #2\\ #3 & #4 \end{array} \right|}

\nc{\rk}{\mathrm{r}}
\newcommand{\g}{\mathfrak g}
\newcommand{\h}{\mathfrak h}
\newcommand{\pf}{\noindent{$Proof$.}\ }
\newcommand{\frkg}{\mathfrak g}
\newcommand{\frkh}{\mathfrak h}
\newcommand{\Id}{\rm{Id}}
\newcommand{\gl}{\mathfrak {gl}}
\newcommand{\ad}{\mathrm{ad}}
\newcommand{\add}{\frka\frkd}
\newcommand{\frka}{\mathfrak a}
\newcommand{\frkb}{\mathfrak b}
\newcommand{\frkc}{\mathfrak c}
\newcommand{\frkd}{\mathfrak d}
\newcommand {\comment}[1]{{\marginpar{*}\scriptsize\textbf{Comments:} #1}}

\nc{\tforall}{\text{ for all }}

\nc{\svec}[2]{{\tiny\left(\begin{matrix}#1\\
#2\end{matrix}\right)\,}}  
\nc{\ssvec}[2]{{\tiny\left(\begin{matrix}#1\\
#2\end{matrix}\right)\,}} 

\nc{\typeI}{local cocycle $3$-Lie bialgebra\xspace}
\nc{\typeIs}{local cocycle $3$-Lie bialgebras\xspace}
\nc{\typeII}{double construction $3$-Lie bialgebra\xspace}
\nc{\typeIIs}{double construction $3$-Lie bialgebras\xspace}

\nc{\bia}{{$\mathcal{P}$-bimodule ${\bf k}$-algebra}\xspace}
\nc{\bias}{{$\mathcal{P}$-bimodule ${\bf k}$-algebras}\xspace}

\nc{\rmi}{{\mathrm{I}}}
\nc{\rmii}{{\mathrm{II}}}
\nc{\rmiii}{{\mathrm{III}}}
\nc{\pr}{{\mathrm{pr}}}
\newcommand{\huaA}{\mathcal{A}}

\nc{\OT}{constant $\theta$-}
\nc{\T}{$\theta$-}
\nc{\IT}{inverse $\theta$-}

\nc{\pll}{\beta}
\nc{\plc}{\epsilon}

\nc{\ass}{{\mathit{Ass}}}
\nc{\lie}{{\mathit{Lie}}}
\nc{\comm}{{\mathit{Comm}}}
\nc{\dend}{{\mathit{Dend}}}
\nc{\zinb}{{\mathit{Zinb}}}
\nc{\tdend}{{\mathit{TDend}}}
\nc{\prelie}{{\mathit{preLie}}}
\nc{\postlie}{{\mathit{PostLie}}}
\nc{\quado}{{\mathit{Quad}}}
\nc{\octo}{{\mathit{Octo}}}
\nc{\ldend}{{\mathit{ldend}}}
\nc{\lquad}{{\mathit{LQuad}}}

 \nc{\adec}{\check{;}} \nc{\aop}{\alpha}
\nc{\dftimes}{\widetilde{\otimes}} \nc{\dfl}{\succ} \nc{\dfr}{\prec}
\nc{\dfc}{\circ} \nc{\dfb}{\bullet} \nc{\dft}{\star}
\nc{\dfcf}{{\mathbf k}} \nc{\apr}{\ast} \nc{\spr}{\cdot}
\nc{\twopr}{\circ} \nc{\tspr}{\star} \nc{\sempr}{\ast}
\nc{\disp}[1]{\displaystyle{#1}}
\nc{\bin}[2]{ (_{\stackrel{\scs{#1}}{\scs{#2}}})}  
\nc{\binc}[2]{ \left (\!\! \begin{array}{c} \scs{#1}\\
    \scs{#2} \end{array}\!\! \right )}  
\nc{\bincc}[2]{  \left ( {\scs{#1} \atop
    \vspace{-.5cm}\scs{#2}} \right )}  
\nc{\sarray}[2]{\begin{array}{c}#1 \vspace{.1cm}\\ \hline
    \vspace{-.35cm} \\ #2 \end{array}}
\nc{\bs}{\bar{S}} \nc{\dcup}{\stackrel{\bullet}{\cup}}
\nc{\dbigcup}{\stackrel{\bullet}{\bigcup}} \nc{\etree}{\big |}
\nc{\la}{\longrightarrow} \nc{\fe}{\'{e}} \nc{\rar}{\rightarrow}
\nc{\dar}{\downarrow} \nc{\dap}[1]{\downarrow
\rlap{$\scriptstyle{#1}$}} \nc{\uap}[1]{\uparrow
\rlap{$\scriptstyle{#1}$}} \nc{\defeq}{\stackrel{\rm def}{=}}
\nc{\dis}[1]{\displaystyle{#1}} \nc{\dotcup}{\,
\displaystyle{\bigcup^\bullet}\ } \nc{\sdotcup}{\tiny{
\displaystyle{\bigcup^\bullet}\ }} \nc{\hcm}{\ \hat{,}\ }
\nc{\hcirc}{\hat{\circ}} \nc{\hts}{\hat{\shpr}}
\nc{\lts}{\stackrel{\leftarrow}{\shpr}}
\nc{\rts}{\stackrel{\rightarrow}{\shpr}} \nc{\lleft}{[}
\nc{\lright}{]} \nc{\uni}[1]{\tilde{#1}} \nc{\wor}[1]{\check{#1}}
\nc{\free}[1]{\bar{#1}} \nc{\den}[1]{\check{#1}} \nc{\lrpa}{\wr}
\nc{\curlyl}{\left \{ \begin{array}{c} {} \\ {} \end{array}
    \right .  \!\!\!\!\!\!\!}
\nc{\curlyr}{ \!\!\!\!\!\!\!
    \left . \begin{array}{c} {} \\ {} \end{array}
    \right \} }
\nc{\leaf}{\ell}       
\nc{\longmid}{\left | \begin{array}{c} {} \\ {} \end{array}
    \right . \!\!\!\!\!\!\!}
\nc{\ot}{\otimes} \nc{\sot}{{\scriptstyle{\ot}}}
\nc{\otm}{\overline{\ot}}
\nc{\ora}[1]{\stackrel{#1}{\rar}}
\nc{\ola}[1]{\stackrel{#1}{\la}}
\nc{\pltree}{\calt^\pl}
\nc{\epltree}{\calt^{\pl,\NC}}
\nc{\rbpltree}{\calt^r}
\nc{\scs}[1]{\scriptstyle{#1}} \nc{\mrm}[1]{{\rm #1}}
\nc{\dirlim}{\displaystyle{\lim_{\longrightarrow}}\,}
\nc{\invlim}{\displaystyle{\lim_{\longleftarrow}}\,}
\nc{\mvp}{\vspace{0.5cm}} \nc{\svp}{\vspace{2cm}}
\nc{\vp}{\vspace{8cm}} \nc{\proofbegin}{\noindent{\bf Proof: }}
\nc{\proofend}{$\blacksquare$ \vspace{0.5cm}}
\nc{\freerbpl}{{F^{\mathrm RBPL}}}
\nc{\sha}{{\mbox{\cyr X}}}  
\nc{\ncsha}{{\mbox{\cyr X}^{\mathrm NC}}} \nc{\ncshao}{{\mbox{\cyr
X}^{\mathrm NC,\,0}}}
\nc{\shpr}{\diamond}    
\nc{\shprm}{\overline{\diamond}}    
\nc{\shpro}{\diamond^0}    
\nc{\shprr}{\diamond^r}     
\nc{\shpra}{\overline{\diamond}^r}
\nc{\shpru}{\check{\diamond}} \nc{\catpr}{\diamond_l}
\nc{\rcatpr}{\diamond_r} \nc{\lapr}{\diamond_a}
\nc{\sqcupm}{\ot}
\nc{\lepr}{\diamond_e} \nc{\vep}{\varepsilon} \nc{\labs}{\mid\!}
\nc{\rabs}{\!\mid} \nc{\hsha}{\widehat{\sha}}
\nc{\lsha}{\stackrel{\leftarrow}{\sha}}
\nc{\rsha}{\stackrel{\rightarrow}{\sha}} \nc{\lc}{\lfloor}
\nc{\rc}{\rfloor}
\nc{\tpr}{\sqcup}
\nc{\nctpr}{\vee}
\nc{\plpr}{\star}
\nc{\rbplpr}{\bar{\plpr}}
\nc{\sqmon}[1]{\langle #1\rangle}
\nc{\forest}{\calf}
\nc{\altx}{\Lambda_X} \nc{\vecT}{\vec{T}} \nc{\onetree}{\bullet}
\nc{\Ao}{\check{A}}
\nc{\seta}{\underline{\Ao}}
\nc{\deltaa}{\overline{\delta}}
\nc{\trho}{\tilde{\rho}}

\nc{\rpr}{\circ}
\nc{\dpr}{{\tiny\diamond}}
\nc{\rprpm}{{\rpr}}

\nc{\mmbox}[1]{\mbox{\ #1\ }} \nc{\ann}{\mrm{ann}}
\nc{\Aut}{\mrm{Aut}} \nc{\can}{\mrm{can}}
\nc{\twoalg}{{two-sided algebra}\xspace}
\nc{\colim}{\mrm{colim}}
\nc{\Cont}{\mrm{Cont}} \nc{\rchar}{\mrm{char}}
\nc{\cok}{\mrm{coker}} \nc{\dtf}{{R-{\rm tf}}} \nc{\dtor}{{R-{\rm
tor}}}
\renewcommand{\det}{\mrm{det}}
\nc{\depth}{{\mrm d}}
\nc{\Div}{{\mrm Div}} \nc{\End}{\mrm{End}} \nc{\Ext}{\mrm{Ext}}
\nc{\Fil}{\mrm{Fil}} \nc{\Frob}{\mrm{Frob}} \nc{\Gal}{\mrm{Gal}}
\nc{\GL}{\mrm{GL}} \nc{\Hom}{\mrm{Hom}} \nc{\hsr}{\mrm{H}}
\nc{\hpol}{\mrm{HP}} \nc{\id}{\mrm{id}} \nc{\im}{\mrm{im}}
\nc{\incl}{\mrm{incl}} \nc{\length}{\mrm{length}}
\nc{\LR}{\mrm{LR}} \nc{\mchar}{\rm char} \nc{\NC}{\mrm{NC}}
\nc{\mpart}{\mrm{part}} \nc{\pl}{\mrm{PL}}
\nc{\ql}{{\QQ_\ell}} \nc{\qp}{{\QQ_p}}
\nc{\rank}{\mrm{rank}} \nc{\rba}{\rm{RBA }} \nc{\rbas}{\rm{RBAs }}
\nc{\rbpl}{\mrm{RBPL}}
\nc{\rbw}{\rm{RBW }} \nc{\rbws}{\rm{RBWs }} \nc{\rcot}{\mrm{cot}}
\nc{\rest}{\rm{controlled}\xspace}
\nc{\rdef}{\mrm{def}} \nc{\rdiv}{{\rm div}} \nc{\rtf}{{\rm tf}}
\nc{\rtor}{{\rm tor}} \nc{\res}{\mrm{res}} \nc{\SL}{\mrm{SL}}
\nc{\Spec}{\mrm{Spec}} \nc{\tor}{\mrm{tor}} \nc{\Tr}{\mrm{Tr}}
\nc{\mtr}{\mrm{sk}}

\nc{\ab}{\mathbf{Ab}} \nc{\Alg}{\mathbf{Alg}}
\nc{\Algo}{\mathbf{Alg}^0} \nc{\Bax}{\mathbf{Bax}}
\nc{\Baxo}{\mathbf{Bax}^0} \nc{\RB}{\mathbf{RB}}
\nc{\RBo}{\mathbf{RB}^0} \nc{\BRB}{\mathbf{RB}}
\nc{\Dend}{\mathbf{DD}} \nc{\bfk}{{\bf k}} \nc{\bfone}{{\bf 1}}
\nc{\base}[1]{{a_{#1}}} \nc{\detail}{\marginpar{\bf More detail}
    \noindent{\bf Need more detail!}
    \svp}
\nc{\Diff}{\mathbf{Diff}} \nc{\gap}{\marginpar{\bf
Incomplete}\noindent{\bf Incomplete!!}
    \svp}
\nc{\FMod}{\mathbf{FMod}} \nc{\mset}{\mathbf{MSet}}
\nc{\rb}{\mathrm{RB}} \nc{\Int}{\mathbf{Int}}
\nc{\Mon}{\mathbf{Mon}}
\nc{\remarks}{\noindent{\bf Remarks: }}
\nc{\OS}{\mathbf{OS}} 
\nc{\Rep}{\mathbf{Rep}}
\nc{\Rings}{\mathbf{Rings}} \nc{\Sets}{\mathbf{Sets}}
\nc{\DT}{\mathbf{DT}}

\nc{\BA}{{\mathbb A}} \nc{\CC}{{\mathbb C}} \nc{\DD}{{\mathbb D}}
\nc{\EE}{{\mathbb E}} \nc{\FF}{{\mathbb F}} \nc{\GG}{{\mathbb G}}
\nc{\HH}{{\mathbb H}} \nc{\LL}{{\mathbb L}} \nc{\NN}{{\mathbb N}}
\nc{\QQ}{{\mathbb Q}} \nc{\RR}{{\mathbb R}} \nc{\BS}{{\mathbb{S}}} \nc{\TT}{{\mathbb T}}
\nc{\VV}{{\mathbb V}} \nc{\ZZ}{{\mathbb Z}}


\nc{\calao}{{\mathcal A}} \nc{\cala}{{\mathcal A}}
\nc{\calc}{{\mathcal C}} \nc{\cald}{{\mathcal D}}
\nc{\cale}{{\mathcal E}} \nc{\calf}{{\mathcal F}}
\nc{\calfr}{{{\mathcal F}^{\,r}}} \nc{\calfo}{{\mathcal F}^0}
\nc{\calfro}{{\mathcal F}^{\,r,0}} \nc{\oF}{\overline{F}}
\nc{\calg}{{\mathcal G}} \nc{\calh}{{\mathcal H}}
\nc{\cali}{{\mathcal I}} \nc{\calj}{{\mathcal J}}
\nc{\call}{{\mathcal L}} \nc{\calm}{{\mathcal M}}
\nc{\caln}{{\mathcal N}} \nc{\calo}{{\mathcal O}}
\nc{\calp}{{\mathcal P}} \nc{\calq}{{\mathcal Q}} \nc{\calr}{{\mathcal R}}
\nc{\calt}{{\mathcal T}} \nc{\caltr}{{\mathcal T}^{\,r}}
\nc{\calu}{{\mathcal U}} \nc{\calv}{{\mathcal V}}
\nc{\calw}{{\mathcal W}} \nc{\calx}{{\mathcal X}}
\nc{\CA}{\mathcal{A}}

\nc{\fraka}{{\mathfrak a}} \nc{\frakB}{{\mathfrak B}}
\nc{\frakb}{{\mathfrak b}} \nc{\frakd}{{\mathfrak d}}
\nc{\oD}{\overline{D}}
\nc{\frakF}{{\mathfrak F}} \nc{\frakg}{{\mathfrak g}}
\nc{\frakm}{{\mathfrak m}} \nc{\frakM}{{\mathfrak M}}
\nc{\frakMo}{{\mathfrak M}^0} \nc{\frakp}{{\mathfrak p}}
\nc{\frakS}{{\mathfrak S}} \nc{\frakSo}{{\mathfrak S}^0}
\nc{\fraks}{{\mathfrak s}} \nc{\os}{\overline{\fraks}}
\nc{\frakT}{{\mathfrak T}}
\nc{\oT}{\overline{T}}
\nc{\frakX}{{\mathfrak X}} \nc{\frakXo}{{\mathfrak X}^0}
\nc{\frakx}{{\mathbf x}}
\nc{\frakTx}{\frakT}      
\nc{\frakTa}{\frakT^a}        
\nc{\frakTxo}{\frakTx^0}   
\nc{\caltao}{\calt^{a,0}}   
\nc{\ox}{\overline{\frakx}} \nc{\fraky}{{\mathfrak y}}
\nc{\frakz}{{\mathfrak z}} \nc{\oX}{\overline{X}}

\font\cyr=wncyr10

\nc{\al}{\alpha}
\nc{\lam}{\lambda}
\nc{\lr}{\longrightarrow}
\newcommand{\K}{\mathbb {K}}
\newcommand{\A}{\rm A}


\title[Anti-flexible bialgebras]{Anti-flexible bialgebras}

\author[Mafoya Landry Dassoundo]{Mafoya Landry Dassoundo$^\star$}
\address[$^\star$]{Chern Institute of Mathematics \& LPMC, Nankai University, Tianjin 300071, China }
\email{dassoundo@yahoo.com}

\author[Chengming Bai]{Chengming Bai$^\dag$}
\address[$^\dag$]{Chern Institute of Mathematics \& LPMC, Nankai University, Tianjin 300071, China }
\email{baicm.nankai.edu.cn}

\author[Mahouton Norbert Hounkonnou]{Mahouton Norbert Hounkonnou$^\ddag$}
\address[$^\ddag$]{University of Abomey-Calavi, International Chair in Mathematical Physics and Applications, ICMPA-UNESCO Chair, 072 BP 50, Cotonou, Rep. of Benin}
\email{hounkonnou@yahoo.fr}


\begin{abstract}
We establish a bialgebra theory for anti-flexible algebras in this
paper. We introduce the notion of an anti-flexible bialgebra which
is equivalent to a Manin triple of anti-flexible algebras. The
study of a special case of anti-flexible bialgebras leads to the
introduction of anti-flexible Yang-Baxter equation in an
anti-flexible algebra which is an analogue of the classical
Yang-Baxter equation in a Lie algebra or the associative
Yang-Baxter equation in an associative algebra. It is a unexpected
consequence that both the anti-flexible Yang-Baxter equation and
the associative Yang-Baxter equation have the same form. A
skew-symmetric solution of anti-flexible Yang-Baxter equation
gives an anti-flexible bialgebra. Finally the notions of an
$\mathcal O$-operator of an anti-flexible algebra and a pre-anti-flexible algebra are
introduced to construct skew-symmetric solutions of anti-flexible
Yang-Baxter equation.
\end{abstract}

\subjclass[2010]{17A20,  17D25, 16D20, 16T10, 16T15,  16T25}

\keywords{anti-flexible algebra, anti-flexible bialgebra,
anti-flexible  Yang-Baxter equation, $\mathcal{O}$-operator}

\maketitle

\tableofcontents

\numberwithin{equation}{section}

\tableofcontents
\numberwithin{equation}{section}
\allowdisplaybreaks

\section{Introduction}

At first, we recall the definition of a flexible algebra.
\begin{defi} Let $A$ be a vector space over a field $\mathbb F$ equipped with a bilinear product $(x,y)\rightarrow xy$. Set the associator as
\begin{equation}
(x,y,z)=(xy)z-x(yz),\forall x,y,z\in A.\label{eq:asso}
\end{equation}
$A$ is called a {\bf flexible algebra} if the following identity is satisfied
\begin{equation}
(x,y,x)=0,\;\;{\rm or}\;\;{\rm equivalently},\;\;(xy)x=x(yx),\;\;\forall x,y\in A.\label{eq:fa}
\end{equation}
\end{defi}

As a natural generalization of associative algebras, flexible
algebras were studied widely. For example, using the solvability
and reducibility of the radicals of their underlying Lie algebras,
finite-dimensional flexible Lie-admissible algebras were
characterized  in \cite{Benkart_O};
any simple strictly
power-associative algebra of characteristic prime to $6$ of degree
greater than $2$ is a flexible algebra (\cite{Kosier}).
Note that the
``linearization" of the identity ~\eqref{eq:fa} gives the
following equivalent identity by substituting $x+z$ for $x$ in
Eq.~\eqref{eq:fa}:
\begin{equation}\label{eq:fa1}
(x,y,z)+(z,y,x)=0,\;\;\forall x,y,z\in A.
\end{equation}

It is also natural to consider certain generalization of flexible
algebras which leads to the introduction of several classes of
nonassociative algebras \cite{Rodabaugh_3}. In particular, the
so-called anti-flexible algebras were introduced as follows.
\begin{defi} Let $A$ be a vector space equipped with a bilinear product $(x,y)\rightarrow xy$.
$A$ is called an {\bf anti-flexible algebra} if the following identity is satisfied
\begin{equation}
(x,y,z)=(z,y,x),\;\;{\rm or}\;\;{\rm equivalently},\;\;(xy)z-x(yz)=(zy)x-z(yx),\;\;\forall x,y,z\in A.\label{eq:afa}
\end{equation}
\end{defi}
Note that the identity~\eqref{eq:afa} means that the associator~\eqref{eq:asso} is symmetric in $x,z$ and thus an anti-flexible algebra is also
called a {\bf center-symmetric algebra} in \cite{Hounkonnou_D_CSA} (it is also called a {\bf $G_4$-associative algebra} in \cite{ME}).
 The study of anti-flexible algebras is fruitful, too.
For example, simplicity and semi-simplicity of anti-flexible
algebras were investigated in \cite{Rodabaugh_1}; the simple,
semisimple (totally) anti-flexible algebras over splitting fields
of characteristic different to $2$ and $3$ were studied and
classified in \cite{Bhandari,Rodabaugh_2,Rodabaugh}; the
primitive structures and prime anti-flexible rings were
investigated in \cite{Celik}; furthermore, it were shown that a
simple nearly anti-flexible algebra of characteristic prime to $30$
satisfying the identity $(x, x, x) = 0$ in which its commutator
gives non-nilpotent structure possesses a unity element
(\cite{Davis_R}).

On the other hand, a bialgebra structure on a given algebraic
structure is obtained as a coalgebra structure together which gives the same algebraic structure on the dual space with a set
of compatibility conditions between the multiplications and
comultiplications. One of the most famous examples of bialgebras is
the Lie bialgebra (\cite{Drinfeld}) and more importantly there have been a
lot of bialgebra theories for other algebra structures that essentially follow the approach of
Lie bialgebras  such as antisymmetric infinitesimal
bialgebras (\cite{Aguiar_1, Bai_Double}), left-symmetric
bialgebras (\cite{ Bai_LSA}), alternative D-bialgebras
(\cite{Gon}) and Jordan bialgebras (\cite{Zhelyabin}).

In this paper, we give a bialgebra theory for anti-flexible
algebras. We still take a similar approach as of the study on Lie
bialgebras, that is, the compatibility condition is still decided
by an analogue of Manin triple of Lie algebras, which we call a
Manin triple of anti-flexible algebras. The notion of
anti-flexible bialgebra is thus introduced as an equivalent
structure of a Manin triple of anti-flexible algebras, which is
interpreted in terms of matched pairs of anti-flexible algebras.
Here the dual bimodule of a bimodule of an anti-flexible algebra
plays an important role. We would like to point out that both
anti-flexible and associative algebras have the same forms of dual
bimodules, which is quite different from other generalizations of
associative algebras such as left-symmetric algebras (\cite{
Bai_LSA}) or other $G$-associative algebras in \cite{ME}.

Although to our knowledge, a well-constructed cohomology theory
for anti-flexible algebras is unknown yet, we still consider a
special case of anti-flexible bialgebras 
following the study of coboundary Lie
bialgebras for Lie algebras (\cite{Drinfeld}) or coboundary
antisymmetric infinitesimal bialgebras for associative algebras
(\cite{Bai_Double}). The study of 
such a class of anti-flexible
bialgebras also leads to the introduction of anti-flexible
Yang-Baxter equation in an anti-flexible equation which is an
analogue of the classical Yang-Baxter equation in a Lie algebra or
the associative Yang-Baxter equation in an associative algebra. A
skew-symmetric solution of anti-flexible Yang-Baxter equation
gives an 
anti-flexible bialgebra.

There is an unexpected consequence that both the anti-flexible Yang-Baxter
equation and the associative Yang-Baxter equation have the same form.
It is partly due to the fact that both anti-flexible and
associative algebras have the same forms of dual bimodules.
Therefore some properties of anti-flexible Yang-Baxter
equation can be obtained directly from the corresponding ones of
associative Yang-Baxter equation.

In particular, as for the study on the associative Yang-Baxter equation, in
order to obtain skew-symmetric solutions of anti-flexible
Yang-Baxter equation, we introduce the notions of an $\mathcal
O$-operator of an anti-flexible algebra which is an analogue of an
$\mathcal O$-operator of a Lie algebra introduced by Kupershmidt in \cite{Kupershmidt__} as a
 natural generalization of the classical Yang-Baxter equation in a
 Lie algebra, and a pre-anti-flexible algebra.
The former gives a construction of skew-symmetric solutions of
anti-flexible Yang-Baxter equation  in a semi-direct product
anti-flexible algebra, whereas the latter as a generalization of a
dendriform algebra (\cite{Loday}) gives a bimodule of the
associated anti-flexible algebra such that the identity is a
natural $\mathcal O$-operator associated to it. Therefore a
construction of skew-symmetric solutions of anti-flexible
Yang-Baxter equation and hence anti-flexible bialgebras from
pre-anti-flexible algebras is given. Note that from the point of view of operads,
pre-anti-flexible algebras are the splitting of anti-flexible algebras (\cite{BBGN,PBG}).

The paper is organized as follows. In Section 2, we study
bimodules and matched pairs of anti-flexible algebras. In
particular, we give the dual bimodule of a bimodule of an
anti-flexible algebra. In Section 3, we give the
notion of a Manin triple of anti-flexible algebras and then
interpret it in terms of matched pairs of anti-flexible
algebras. The notion of an anti-flexible bialgebra is thus introduced as
an equivalent structure of a Manin triple of anti-flexible
algebras. In Section 4, we consider the 
special class of anti-flexible
bialgebras which lead to the introduction of anti-flexible
Yang-Baxter equation. A skew-symmetric solution of
anti-flexible Yang-Baxter equation gives such a 
anti-flexible bialgebra. In Section 5, we introduce the notions of
an $\mathcal O$-operator of an anti-flexible algebra and a pre-anti-flexible algebra. The
relationships between them and the anti-flexible Yang-Baxter
equation are given. In particular, we give constructions of
skew-symmetric solutions of anti-flexible Yang-Baxter equation
from $\mathcal O$-operators of anti-flexible algebras and pre-anti-flexible algebras.

Throughout this paper, all vector spaces are
finite-dimensional over a base field $\mathbb F$ whose
characteristic is not $2$, although many results still hold in the
infinite dimension.

\section{Bimodules and matched pairs of anti-flexible algebras}

In this section, we first introduce the notion of a bimodule of an anti-flexible algebra. Then we study the dual bimodule of a bimodule
of an anti-flexible algebra. We also give the notion of a matched pair of anti-flexible algebras.

\begin{defi}\label{bimodule}
Let $(A, \cdot)$ be an anti-flexible algebra and $V$ be a vector space.
Let $\displaystyle l,r : A \rightarrow {\rm End}(V)$ be two linear maps.
If for any  $x, y \in A$,
\begin{eqnarray}\label{eqbimodule1}
l{(x\cdot y)}-l(x)l(y)=r(x)r(y)-r({y\cdot x}),
\end{eqnarray}
\begin{eqnarray}\label{eqbimodule2}
    l(x)r(y)-r(y)l(x)=l(y)r(x)-r(x)l(y),
\end{eqnarray}
then it is called a {\bf bimodule} of  $(A, \cdot)$, denoted by
$\displaystyle(l, r, V)$.
Two bimodules $(l_1,r_1,V_1)$ and $(l_2,r_2,V_2)$ of an anti-flexible algebra $A$ is called {\bf equivalent} if there exists
a linear isomorphism $\varphi:V_1\rightarrow V_2$ satisfying
\begin{equation}
\varphi l_1(x) =l_2(x)\varphi, \varphi r_1(x)=r_2(x)\varphi,\;\;\forall x\in A.
\end{equation}
\end{defi}

\begin{rmk}
Note that if both sides of Eqs.~(\ref{eqbimodule1}) and (\ref{eqbimodule2}) are zero,
then they exactly give the definition of a bimodule of an associative algebra.
\end{rmk}

Let $(A,\cdot)$ be an anti-flexible algebra. For any $x,y\in A$, let $L_x$ and
$R_x$ denote the left and right multiplication operators
respectively, that is, $L_x(y)=xy$ and $R_x(y)=yx$. Let $L,R: A\rightarrow {\rm End}(A)$ be two
linear maps with $x\rightarrow L_x$ and $x\rightarrow R_x$ for any $x\in A$ respectively.

\begin{ex}
Let $(A,\cdot)$ be an anti-flexible algebra. Then $(L,R,A)$ is a bimodule of $(A,\cdot)$, which is called
the {\bf regular bimodule of $(A,\cdot)$}.
\end{ex}

\begin{pro}\label{Propo_bimodule}
Let $(A, \cdot)$ be an anti-flexible algebra and $V$ be a vector space.
Let $\displaystyle l,r : A \rightarrow {\rm End}(V)$ be two linear maps.
Then $(l, r, V)$ is a bimodule of $(A, \cdot)$ if and only if the direct sum
$A\oplus V $ of vector spaces is turned into an anti-flexible algebra  by defining the multiplication in $A\oplus V$ by
\begin{eqnarray}
(x+u)\ast (y+v) = x\cdot y+l({x})v+r({y})u,\;\;\forall x,y\in A, u,v\in V.
\end{eqnarray}
We call it {\bf semi-direct product} and denote it by
$\displaystyle A \ltimes_{l, r} V $ or simply $A \ltimes V.$
\end{pro}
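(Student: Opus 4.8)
The plan is to compute the associator of $\ast$ on $A\oplus V$ explicitly and compare it with its reversal, since the defining anti-flexible identity $(X,Y,Z)=(Z,Y,X)$ (in the sense of Eq.~\eqref{eq:afa}) is exactly what has to be checked. First I would take three general elements $X=x+u$, $Y=y+v$, $Z=z+w$ with $x,y,z\in A$, $u,v,w\in V$, and expand $(X\ast Y)\ast Z$ and $X\ast(Y\ast Z)$ from the defining formula. Since $\ast$ is bilinear and sends $V\times V$ to $0$, the associator $(X,Y,Z)_\ast:=(X\ast Y)\ast Z-X\ast(Y\ast Z)$ splits into an $A$-valued part and a $V$-valued part, the latter being linear in each of $u,v,w$ separately; explicitly I expect
\begin{align*}
(X,Y,Z)_\ast = (x,y,z) &+ \big(l(x\cdot y)-l(x)l(y)\big)w \\
&+ \big(r(z)l(x)-l(x)r(z)\big)v + \big(r(z)r(y)-r(y\cdot z)\big)u,
\end{align*}
where $(x,y,z)$ is the associator in $(A,\cdot)$.

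Next I would obtain $(Z,Y,X)_\ast$ by interchanging $x\leftrightarrow z$ and $u\leftrightarrow w$ above, and subtract. The $A$-component of $(X,Y,Z)_\ast-(Z,Y,X)_\ast$ is $(x,y,z)-(z,y,x)$, which vanishes automatically since $(A,\cdot)$ is anti-flexible, so it contributes no condition. The $V$-component must vanish for all $u,v,w\in V$; as these are arbitrary and independent, I would equate the coefficients of $u$, $v$, and $w$ separately, turning the single identity into three operator identities on $V$.

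The crux is then to recognize these three identities as the bimodule axioms. The coefficient of $w$ gives $l(x\cdot y)-l(x)l(y)=r(x)r(y)-r(y\cdot x)$, which is Eq.~\eqref{eqbimodule1}; the coefficient of $u$ gives the same identity with $z$ in place of $x$, hence is again Eq.~\eqref{eqbimodule1} after relabeling; and the coefficient of $v$ gives $l(x)r(z)-r(z)l(x)=l(z)r(x)-r(x)l(z)$, which is Eq.~\eqref{eqbimodule2} with $z$ in place of $y$. Reading the equivalences backwards proves the converse. I do not anticipate a genuine obstacle: the argument is essentially bookkeeping, and the only points needing care are the clean splitting of the associator into $A$- and $V$-components and the observation that the $u$- and $w$-coefficients collapse to the single axiom~\eqref{eqbimodule1}, so that no spurious extra condition arises.
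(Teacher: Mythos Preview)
Your proof is correct and is exactly the ``straightforward'' computation the paper alludes to; the paper itself does not spell it out, merely noting that the result is either direct or a special case of Theorem~\ref{theoo} (taking $B=V$ with trivial multiplication and $l_B=r_B=0$). Your explicit associator formula and the extraction of the three operator identities from the coefficients of $u,v,w$ are precisely what is needed, and your observation that the $u$- and $w$-coefficients both reduce to Eq.~\eqref{eqbimodule1} is the only subtle point.
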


\begin{proof}
It is straightforward or follows from Theorem~\ref{theoo} as a direct consequence.
\end{proof}

It is known that an anti-flexible algebra is a Lie-admissible algebra (\cite{ME}).

\begin{pro}
Let $(A, \cdot)$ be an anti-flexible algebra. Define the commutator by
\begin{equation}
[x, y]=x\cdot y-y\cdot x,\;\;\forall x,y\in A.
\end{equation}
Then it is a Lie algebra and we denote it by $(\frak g(A), [\;,\;])$ or simply $\frak g(A)$, which is called {\bf the associated Lie algebra of $(A,\cdot)$}.
\end{pro}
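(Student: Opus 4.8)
The plan is to verify the three defining axioms of a Lie algebra for the bracket $[x,y]=x\cdot y-y\cdot x$. Bilinearity is inherited directly from the bilinearity of the product $\cdot$, and antisymmetry is immediate: $[x,x]=x\cdot x-x\cdot x=0$ for all $x$, which (since $\mathrm{char}\,\mathbb F\neq 2$) is equivalent to $[x,y]=-[y,x]$ for all $x,y\in A$. Hence the entire content of the statement is the Jacobi identity
\[
[[x,y],z]+[[y,z],x]+[[z,x],y]=0,\qquad\forall x,y,z\in A.
\]

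First I would expand the left-hand side by substituting the definition of the bracket twice. Writing $J$ for this cyclic sum and using the associator notation of \eqref{eq:asso}, the key step is to verify by direct expansion that
\[
J=\big((x,y,z)-(z,y,x)\big)+\big((y,z,x)-(x,z,y)\big)+\big((z,x,y)-(y,x,z)\big).
\]
Once this regrouping is established, each of the three parenthesized differences vanishes by the anti-flexible identity \eqref{eq:afa}, so $J=0$ and the Jacobi identity follows. This is exactly the mechanism by which the symmetry of the associator in its outer arguments is used.

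The main (and essentially only) obstacle is the bookkeeping behind this regrouping. Expanding
\[
[[x,y],z]=(x\cdot y)\cdot z-(y\cdot x)\cdot z-z\cdot(x\cdot y)+z\cdot(y\cdot x)
\]
together with its two cyclic analogues produces twelve products in total; I would collect these twelve terms and check that they pair up precisely into the six associators $(x,y,z),(z,y,x),(y,z,x),(x,z,y),(z,x,y),(y,x,z)$ with the signs indicated above, each term being used exactly once and with no leftover terms. Alternatively, since the text just before the statement records that an anti-flexible algebra is Lie-admissible, the proposition may be deduced by citation; but I prefer to exhibit the explicit identity for $J$ above, so that the precise role of \eqref{eq:afa} is transparent.
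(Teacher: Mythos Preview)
Your proof is correct. The paper itself does not supply a proof of this proposition: it simply records, immediately before the statement, that ``an anti-flexible algebra is a Lie-admissible algebra'' with a citation to \cite{ME}, and then states the proposition without argument. Your explicit verification via the identity
\[
[[x,y],z]+[[y,z],x]+[[z,x],y]=\big((x,y,z)-(z,y,x)\big)+\big((y,z,x)-(x,z,y)\big)+\big((z,x,y)-(y,x,z)\big)
\]
is the standard computation underlying that citation, and it makes precise how the anti-flexible identity \eqref{eq:afa} forces each of the three differences to vanish. So you have supplied what the paper leaves as a known fact; your own remark that the result could alternatively be obtained by citation matches exactly what the authors do.
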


\begin{cor}\label{propcentlie}
Let $(l,r,V)$ be a bimodule of an anti-flexible algebra $(A,\cdot)$. Then $(l-r,V)$ is a representation
of the associated Lie algebra $(\frak g(A),[\;,\;])$.
\end{cor}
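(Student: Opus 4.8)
The plan is to verify directly the defining relation of a Lie algebra representation for the map $\rho:=l-r\colon A\to {\rm End}(V)$, namely that $\rho([x,y])=\rho(x)\rho(y)-\rho(y)\rho(x)$ for all $x,y\in A$, where $[x,y]=x\cdot y-y\cdot x$. First I would expand the left-hand side by linearity,
\begin{equation*}
\rho([x,y])=l(x\cdot y)-l(y\cdot x)-r(x\cdot y)+r(y\cdot x).
\end{equation*}

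The key move is to eliminate the product terms $l(x\cdot y)$ and $l(y\cdot x)$ using the first bimodule identity \eqref{eqbimodule1}, rewritten as $l(x\cdot y)=l(x)l(y)+r(x)r(y)-r(y\cdot x)$, together with its counterpart obtained by interchanging $x$ and $y$. After this substitution the terms $r(x\cdot y)$ and $r(y\cdot x)$ cancel in pairs, and the left-hand side collapses to
\begin{equation*}
\rho([x,y])=l(x)l(y)-l(y)l(x)+r(x)r(y)-r(y)r(x).
\end{equation*}

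Next I would expand the right-hand side $\rho(x)\rho(y)-\rho(y)\rho(x)=(l(x)-r(x))(l(y)-r(y))-(l(y)-r(y))(l(x)-r(x))$. The pure $l$-terms and the pure $r$-terms reproduce exactly the expression just obtained for $\rho([x,y])$, so the two sides agree precisely when the remaining mixed terms satisfy
\begin{equation*}
-l(x)r(y)+r(y)l(x)-r(x)l(y)+l(y)r(x)=0.
\end{equation*}
But this is exactly the content of the second bimodule identity \eqref{eqbimodule2}, which reads $l(x)r(y)-r(y)l(x)=l(y)r(x)-r(x)l(y)$. Hence the two sides coincide, and $(l-r,V)$ is a representation of the associated Lie algebra $\frak g(A)$.

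This argument is a finite, elementary computation with no essential obstacle; the only point requiring care is the bookkeeping of the four mixed terms involving both $l$ and $r$, and the recognition that identity \eqref{eqbimodule2} is precisely the relation needed to cancel them. I note that \eqref{eqbimodule1} is used in both its original form and its $x\leftrightarrow y$ swapped form, whereas \eqref{eqbimodule2} is applied directly.
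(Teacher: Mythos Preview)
Your proof is correct and follows essentially the same approach as the paper. The only cosmetic difference is that the paper starts from the commutator $[(l-r)(x),(l-r)(y)]$, first uses \eqref{eqbimodule2} to kill the mixed $l$--$r$ terms, and then uses \eqref{eqbimodule1} (and its $x\leftrightarrow y$ swap) to reach $(l-r)([x,y])$; you simply run the same computation in the opposite direction.
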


\begin{proof}
For any $x,y\in A$, we have
\begin{eqnarray*}
[(l-r)(x),(l-r)(y)]&=&[l(x),l(y)]+[r(x),r(y)]-[l(x),r(y)]-[r(x),l(y)]\\
&=&[l(x),l(y)]+[r(x),r(y)]
=l(x\cdot y-y\cdot x)-r(x\cdot y-y\cdot x)\\
&=&(l-r)([x,y]).
\end{eqnarray*}
Hence $(l-r,V)$ is a representation
of $(\frak g(A),[\;,\;])$.
\end{proof}

Let $(A,\cdot)$ be an anti-flexible algebra. Let $V$ be a vector space and $\alpha:A\rightarrow {\rm End}(V)$ be a linear map.
Define a linear map $\alpha^*:A\rightarrow {\rm End}(V^*)$ as
\begin{equation}
\langle \alpha^*(x) u^*,v\rangle=\langle
u^*,\alpha(x)v\rangle,\;\;\forall x\in A, v\in V, u^*\in V^*,
\end{equation}
where $\langle,\rangle$ is the usual pairing between $V$ and the
dual space $V^*$.
\begin{pro}
Let $(l,r,V)$ be a bimodule of an anti-flexible algebra $(A,\cdot)$. Then $(r^*,l^*,V^*)$ is bimodule of $(A,\cdot)$.
\end{pro}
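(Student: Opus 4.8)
The plan is to derive the two defining identities for the candidate bimodule $(r^*,l^*,V^*)$ by dualizing the identities~\eqref{eqbimodule1} and~\eqref{eqbimodule2} that $(l,r,V)$ already satisfies. The single algebraic fact driving everything is that transposition of operators on $V$ is linear but reverses the order of composition: for $\phi,\psi\in{\rm End}(V)$ one has $(\phi\psi)^*=\psi^*\phi^*$. In the notation fixed before the statement this means $\alpha^*(x)$ is exactly the transpose of the operator $\alpha(x)$, and hence $(\alpha(x)\beta(y))^*=\beta^*(y)\alpha^*(x)$ for any two linear maps $\alpha,\beta:A\to{\rm End}(V)$. Note that in $(r^*,l^*,V^*)$ the map $r^*$ plays the role of the left action and $l^*$ the role of the right action, so the two conditions I must check are $r^*(x\cdot y)-r^*(x)r^*(y)=l^*(x)l^*(y)-l^*(y\cdot x)$ and $r^*(x)l^*(y)-l^*(y)r^*(x)=r^*(y)l^*(x)-l^*(x)r^*(y)$ for all $x,y\in A$.

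First I would verify the analogue of~\eqref{eqbimodule1}. Transposing~\eqref{eqbimodule1} term by term and applying the order-reversal rule turns the left-hand side into $l^*(x\cdot y)-l^*(y)l^*(x)$ and the right-hand side into $r^*(y)r^*(x)-r^*(y\cdot x)$, giving $l^*(x\cdot y)-l^*(y)l^*(x)=r^*(y)r^*(x)-r^*(y\cdot x)$. Exchanging the roles of $x$ and $y$ in this identity and then moving terms across the equality yields precisely $r^*(x\cdot y)-r^*(x)r^*(y)=l^*(x)l^*(y)-l^*(y\cdot x)$, which is the first required condition.

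Next I would verify the analogue of~\eqref{eqbimodule2}. Transposing~\eqref{eqbimodule2} and again using $(\phi\psi)^*=\psi^*\phi^*$ converts the four products $l(x)r(y),\,r(y)l(x),\,l(y)r(x),\,r(x)l(y)$ into $r^*(y)l^*(x),\,l^*(x)r^*(y),\,r^*(x)l^*(y),\,l^*(y)r^*(x)$ respectively, so the transposed identity reads $r^*(y)l^*(x)-l^*(x)r^*(y)=r^*(x)l^*(y)-l^*(y)r^*(x)$. This is exactly the second required condition, read with its two sides interchanged, so no further manipulation is needed.

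The computation is essentially routine once the transposition rule is in place, so I do not expect a genuine obstacle; the only points that require care are the reversal of composition order under $(\,\cdot\,)^*$ and the fact that the left and right actions swap roles when passing to $(r^*,l^*,V^*)$. The auxiliary index swap $x\leftrightarrow y$ used to match the transpose of~\eqref{eqbimodule1} against the first defining condition is the one place where the particular symmetric shape of the anti-flexible bimodule axioms is really being exploited, and I would flag it as the main (if mild) subtlety of the argument.
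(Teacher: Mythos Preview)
Your proposal is correct and follows essentially the same approach as the paper: dualize the two bimodule identities using the order-reversal rule for transposes, then swap $x\leftrightarrow y$ in the transpose of~\eqref{eqbimodule1} to match the required form. The paper carries out exactly these steps but writes them via explicit pairings $\langle\,\cdot\,,\,\cdot\,\rangle$ with elements of $V^*$ and $V$ rather than at the operator level; the underlying argument is identical.
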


\begin{proof}
For all $x,y\in A, u^*\in V^*, v\in V$, we have
\begin{eqnarray*}
\left<(r^*{(x\cdot y)}-r^*(x)r^*(y))u^*, v\right>
        &=&
        \left<u^*, (r{(x\cdot y)}-r(y)r(x))(v)\right>
        = \left<u^*,(l(y)l(x)-l(y\cdot x))(v)\right> \\
&=&\left<(l^*(x)l^*(y)-l^*(y\cdot x))u^*, v\right>;\\
\langle (l^*(x)r^*(y)-r^*(y)l^*(x))u^*,v\rangle
&=&\langle u^*, (r(y)l(x)-l(x)r(y))(v)\rangle
=\langle u^*, (r(x)l(y)-l(y)r(x))(v)\rangle\\
&=&\langle (l^*(y)r^*(x)-r^*(x)l^*(y))u^*, v\rangle.
\end{eqnarray*}
Hence $(r^*,l^*,V^*)$ is bimodule of $(A,\cdot)$.
\end{proof}

\begin{rmk}\label{rmk:same}
Note that for a bimodule $(l,r,V)$ of an associative algebra, $(r^*,l^*,V^*)$ is also
a bimodule.  Therefore, for both associative and anti-flexible algebras, the ``dual bimodules" in the above
sense have the same form.
\end{rmk}

\begin{thm}\label{theoo}{\rm (\cite{Hounkonnou_D_CSA})}
    Let $(A, \cdot)$ and $(B, \circ)$ be two anti-flexible algebras.
    Suppose that there are four linear maps $l_A,r_A:A\rightarrow {\rm End}(B)$ and
$l_B,r_B:B\rightarrow {\rm End}(A)$ such that $(l_{A}, r_{A}, B)$ and $(l_{B}, r_{B}, A)$
    are bimodules of $(A,\cdot)$ and $(B,\circ)$ respectively, obeying the following relations:
\begin{eqnarray}\label{eqq1}
     l_{B}(a)(x\cdot y) +r_{B}(a)(y\cdot x)-r_{B}(l_{A}(x)a)y-  y\cdot(r_{B}(a)x) -l_{B}(r_{A}(x)a)y -  (l_{B}(a)x)\cdot y  = 0,
\end{eqnarray}
\begin{eqnarray}\label{eqq2}
 l_{A}(x)(a\circ b) +r_{A}(x)(b\circ a)-r_{A}(l_{B}(a)x)b- b\circ (r_{A}(x)a)+l_{A}(r_{B}(a)x)b - (l_{A}(x)a)\circ b=0,
\end{eqnarray}
\begin{eqnarray}\label{eqq3}
\begin{array}{lll}
y\cdot (l_{B}(a)x)+(r_{B}(a)x)\cdot y - (r_{B}(a)y)\cdot x-l_{B}(l_{A}(y)a)x \cr+
r_{B}(r_{A}(x)a)y+l_{B}(l_{A}(x)a)y  -x\cdot (l_{B}(a)y)-r_{B}(r_{A}(y)a)x=0,
\end{array}
\end{eqnarray}
\begin{eqnarray}\label{eqq4}
\begin{array}{lll}
b \circ (l_{A}(x)a)+(r_{A}(x)a)\circ b -(r_{A}(x)b)\circ a-l_{A}(l_{B}(b)x)a\cr+  r_{A}(r_{B}(a)x)b+l_{A}(l_{B}(a)x)b  -a\circ (l_{A}(x)b)  -r_{A}(r_{B}(b)x)a=0, \end{array}
\end{eqnarray}
for any $x,y\in A, a,b\in B$. Then there is an anti-flexible algebra structure on $A \oplus B$
    given by:
\begin{equation}\label{product}
 (x+a)\ast (y+b)= (x \cdot y + l_{B}(a)y+r_{B}(b)x)+ (a \circ b + l_{A}(x)b+r_{A}(y)a),\;\;\forall x,y\in A, a,b\in B.
\end{equation}
Conversely,  every
anti-flexible algebra which is a direct sum of the underlying
vector spaces of two subalgebras can be obtained from the above way.
\end{thm}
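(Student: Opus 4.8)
The plan is to verify directly that the product $\ast$ of \eqref{product} satisfies the anti-flexible identity \eqref{eq:afa}, i.e.\ that the associator $(X\ast Y)\ast Z-X\ast(Y\ast Z)$ is symmetric under interchanging $X$ and $Z$ for all $X,Y,Z\in A\oplus B$. Since $\ast$ is bilinear and the associator is trilinear, it suffices to check this on homogeneous arguments, each lying in $A$ or in $B$; this produces $2^{3}=8$ cases indexed by the ``type pattern'' of $(X,Y,Z)$. The two pure patterns $(A,A,A)$ and $(B,B,B)$ reduce, via \eqref{product}, to the anti-flexibility of $(A,\cdot)$ and $(B,\circ)$, hence hold by hypothesis. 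Because the anti-flexible identity is invariant under the swap $X\leftrightarrow Z$, the six mixed patterns collapse to four classes: the pairs $(A,A,B)\leftrightarrow(B,A,A)$ and $(A,B,B)\leftrightarrow(B,B,A)$, together with the self-symmetric $(A,B,A)$ and $(B,A,B)$. Thus only four genuinely distinct mixed verifications remain.

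For each mixed class I would expand both associators using \eqref{product} and then project onto $A$ and onto $B$. The key claim, to be confirmed by direct computation, is that the component landing in the ``majority'' summand reproduces exactly one of the compatibility relations, while the component landing in the ``minority'' summand reduces to one of the bimodule axioms \eqref{eqbimodule1}--\eqref{eqbimodule2}, which hold by assumption. Concretely, for $x,y\in A$ and $a\in B$: the $A$-component of the identity for the triple $(x,y,a)$ is \eqref{eqq1} and its $B$-component is \eqref{eqbimodule1} for $(l_A,r_A,B)$; the $A$-component for $(x,a,y)$ is \eqref{eqq3} and its $B$-component is \eqref{eqbimodule2} for $(l_A,r_A,B)$. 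The two patterns with a single $A$-entry are handled identically after interchanging the roles of $A$ and $B$, under which \eqref{product} is symmetric, yielding \eqref{eqq2}, \eqref{eqq4} and the two bimodule axioms of $(l_B,r_B,A)$. Since the eight projected component equations are exactly the four relations \eqref{eqq1}--\eqref{eqq4} together with the four bimodule axioms, each case holds, so $\ast$ is anti-flexible.

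For the converse, suppose $(D,\ast)$ is anti-flexible with $D=A\oplus B$ as a vector space, where $A,B$ are subalgebras, so that $\cdot:=\ast|_{A\times A}$ and $\circ:=\ast|_{B\times B}$ are anti-flexible. I would define the four structure maps by projecting the mixed products: for $x\in A$ and $a\in B$ set
\begin{equation*}
l_B(a)x:=\pr_A(a\ast x),\quad r_A(x)a:=\pr_B(a\ast x),\quad r_B(a)x:=\pr_A(x\ast a),\quad l_A(x)a:=\pr_B(x\ast a),
\end{equation*}
where $\pr_A,\pr_B:D\to A,B$ are the projections. These are linear in each slot, so $l_A,r_A:A\to\End(B)$ and $l_B,r_B:B\to\End(A)$, and bilinearity of $\ast$ shows that $\ast$ is recovered by \eqref{product}. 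Running the forward computation in reverse, the restriction of the anti-flexible identity of $(D,\ast)$ to homogeneous triples of each mixed type yields, in its two projected components, precisely the bimodule axioms for $(l_A,r_A,B)$ and $(l_B,r_B,A)$ together with \eqref{eqq1}--\eqref{eqq4}. Hence $(A,\cdot),(B,\circ)$ and the four maps form a matched pair of the stated form.

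I expect the only real difficulty to be organizational rather than conceptual: the argument is a bookkeeping exercise across eight cases, each split into two projected components, and the chief risk is sign and index errors when expanding \eqref{product} inside nested products. The $X\leftrightarrow Z$ symmetry of the associator and the $A\leftrightarrow B$ symmetry of \eqref{product} each halve the labor, so in practice only two expansions carried out in full, say the $A$-components of $(x,y,a)$ and $(x,a,y)$, suffice, the remaining cases following by these symmetries.
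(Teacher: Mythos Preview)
Your approach is correct and is the standard direct verification: expand the associator of \eqref{product} on homogeneous triples, exploit the $X\leftrightarrow Z$ and $A\leftrightarrow B$ symmetries to reduce to a handful of cases, and observe that the two projected components of each mixed case reproduce exactly one bimodule axiom and one of the compatibility relations \eqref{eqq1}--\eqref{eqq4}. A quick check of the $(x,y,a)$ case confirms your bookkeeping: the $B$-component gives \eqref{eqbimodule1} for $(l_A,r_A,B)$, and the $A$-component gives \eqref{eqq1} (after the harmless relabeling $x\leftrightarrow y$). The converse via projections is also set up correctly.

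There is nothing to compare against in the present paper, however: Theorem~\ref{theoo} is quoted from \cite{Hounkonnou_D_CSA} and no proof is reproduced here. Your argument is exactly the expected one and is presumably what appears in the cited reference, so there is no genuine methodological difference to discuss.
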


\begin{defi} Let $(A, \cdot)$ and $(B, \circ)$ be two anti-flexible algebras. Suppose that
there are four linear maps $l_A,r_A:A\rightarrow {\rm End}(B)$ and
$l_B,r_B:B\rightarrow {\rm End}(A)$ such that $(l_{A}, r_{A}, B)$ and $(l_{B}, r_{B}, A)$
    are bimodules of $(A,\cdot)$ and $(B,\circ)$ and Eqs.~\eqref{eqq1}-\eqref{eqq4} hold. Then
we call the six-tuple $(A, B, l_{A}, r_{A}, l_{B}, r_{B})$ a {\bf matched pair of anti-flexible algebras}.
We also denote the anti-flexible algebra defined by Eq.~(\ref{product}) by  $\displaystyle A \bowtie_{l_{B}, r_{B}}^{l_{A}, r_{A}} B$ or
simply by $\displaystyle A \bowtie B$.
\end{defi}

\section{Manin triples of anti-flexible algebras and anti-flexible bialgebras}

In this section, we introduce the notions of a Manin triple of anti-flexible algebras and
an anti-flexible bialgebra. The equivalence between them is interpreted in terms of matched pairs of anti-flexible
algebras.

\begin{defi}
 A bilinear form $\frak B$ on an anti-flexible algebra $(A,\cdot)$ is called {\bf invariant} if
\begin{equation}
\frak B(x\cdot y,z)=\frak B(x, y\cdot z),\;\;\forall x,y,z\in A.
\end{equation}
\end{defi}

\begin{pro}\label{pro:dual}
Let $(A,\cdot)$ be an anti-flexible algebra. If there is a
nondegenerate symmetric invariant bilinear form $\frak B$ on $A$,
then as bimodules of the anti-flexible algebra $(A,\cdot)$, $(L,R,
A)$ and $(R^*,L^*, A^*)$ are equivalent. Conversely, if as
bimodules of an anti-flexible algebra $(A,\cdot)$, $(L,R, A)$ and
$(R^*,L^*, A^*)$ are equivalent, then there exists a nondegenerate
invariant bilinear form $\frak B$ on $A$.
\end{pro}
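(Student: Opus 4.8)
The plan is to transport the bilinear form into a linear isomorphism between $A$ and $A^*$ and then translate the equivalence of bimodules, in the sense of Definition~\ref{bimodule}, directly into the defining properties of $\frak B$. Given a nondegenerate symmetric invariant bilinear form $\frak B$, I would define $\varphi : A \to A^*$ by $\langle \varphi(x), y\rangle = \frak B(x,y)$ for all $x,y \in A$; nondegeneracy of $\frak B$ is exactly the statement that $\varphi$ is a linear isomorphism. It then remains to verify the two intertwining conditions $\varphi L(x) = R^*(x)\varphi$ and $\varphi R(x) = L^*(x)\varphi$ for all $x \in A$, since $(L,R,A)$ and $(R^*,L^*,A^*)$ are the two bimodules in question.

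For the second identity I would pair against an arbitrary $z \in A$: on one hand $\langle \varphi R(x) y, z\rangle = \frak B(y\cdot x, z)$, while on the other $\langle L^*(x)\varphi(y), z\rangle = \langle \varphi(y), x\cdot z\rangle = \frak B(y, x\cdot z)$, so this identity is precisely invariance. For the first identity, $\langle \varphi L(x)y, z\rangle = \frak B(x\cdot y, z)$ and $\langle R^*(x)\varphi(y), z\rangle = \langle \varphi(y), z\cdot x\rangle = \frak B(y, z\cdot x)$; the required equality $\frak B(x\cdot y, z) = \frak B(y, z\cdot x)$ follows by combining symmetry and invariance (symmetry turns $\frak B(y,z\cdot x)$ into $\frak B(z\cdot x, y)$, invariance into $\frak B(z, x\cdot y)$, and symmetry back into $\frak B(x\cdot y, z)$). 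This establishes the equivalence of the two bimodules.

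For the converse I would reverse the construction: given a linear isomorphism $\varphi : A \to A^*$ with $\varphi L(x) = R^*(x)\varphi$ and $\varphi R(x) = L^*(x)\varphi$, I set $\frak B(x,y) = \langle \varphi(x), y\rangle$. Since $\varphi$ is bijective, $\frak B$ is nondegenerate. Invariance then follows from a single one of the intertwining relations: writing $x\cdot y = R(y)x$, I compute $\frak B(x\cdot y, z) = \langle \varphi R(y) x, z\rangle = \langle L^*(y)\varphi(x), z\rangle = \langle \varphi(x), y\cdot z\rangle = \frak B(x, y\cdot z)$, which is exactly invariance. Note that only nondegeneracy and invariance (not symmetry) are claimed in the converse, which is consistent with the observation that a single intertwining relation already suffices and that symmetry of $\frak B$ is not forced.

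I do not expect a genuine obstacle here: the argument is a routine dualization once $\varphi$ is set up. The one point requiring care is the bookkeeping, namely matching each of the two equivalence equations to the correct property of $\frak B$, and in particular noticing that one relation ($\varphi R = L^*\varphi$) yields invariance outright while the other ($\varphi L = R^*\varphi$) additionally requires symmetry. The anti-flexibility of $(A,\cdot)$ is not invoked directly in these verifications; it enters only through the prior fact that $(R^*,L^*,A^*)$ is indeed a bimodule, so that the equivalence in Definition~\ref{bimodule} is meaningful.
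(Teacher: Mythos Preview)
Your proposal is correct and follows essentially the same approach as the paper: define $\varphi$ by $\langle \varphi(x),y\rangle=\frak B(x,y)$ and verify the two intertwining identities by pairing against an arbitrary element, using invariance for $\varphi R=L^*\varphi$ and symmetry plus invariance for $\varphi L=R^*\varphi$. Your treatment of the converse is in fact more detailed than the paper's, which dispatches it with ``by a similar way''; your observation that only invariance (not symmetry) is asserted in the converse, and that it follows from a single intertwining relation, is a worthwhile clarification.
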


\begin{proof}
Since $\frak B$ is nondegenerate, there exists a linear isomorphism $\varphi: A\rightarrow A^*$ defined by
$$\langle \varphi (x), y\rangle =\frak B(x,y),\;\;\forall x, y\in A.$$
Hence for any $x,y,z\in A$, we have
\begin{eqnarray*}
\langle \varphi L(x) y, z\rangle&=&\frak B(x\cdot y, z)=\frak B(z, x\cdot y)=\frak B(z\cdot x, y)=\langle \varphi (y), z\cdot x\rangle
=\langle R^*(x)\varphi(y), z\rangle;\\
\langle \varphi R(x) y, z\rangle&=&\frak B(y\cdot x, z)=\frak B(y, x\cdot z)=\langle \varphi (y), x\cdot z\rangle
=\langle L^*(x)\varphi(y), z\rangle.
\end{eqnarray*}
Hence $(L,R, A)$ and $(R^*,L^*, A^*)$ are equivalent. Conversely,
by a similar way, we can get the conclusion.
\end{proof}

\begin{defi} A {\bf Manin triple of
anti-flexible algebras} is a
triple of anti-flexible algebras $(A,A^{+},A^{-})$ together with a
nondegenerate symmetric invariant bilinear form $\mathfrak{B}$ on $A$ such that the following conditions are satisfied.
\begin{enumerate}
 \item $A^{+}$ and $A^{-}$
are anti-flexible subalgebras of $A$;
 \item
  $A=A^{+}\oplus A^{-}$ as
vector spaces; \item  $A^{+}$ and $A^{-}$ are isotropic with
respect to $\mathfrak{B}$, that is,
$\mathfrak{B}(x_+,y_+)=\mathfrak{B}(x_-,y_-)=0$ for any
$x_+,y_+\in A^+,x_-,y_-\in A^-$.
\end{enumerate}
A {\bf isomorphism} between two Manin triples $(A,A^{+},A^{-})$
and $(B,B^{+},B^{-})$ of anti-flexible algebras
 is an isomorphism $\varphi:A\rightarrow B$ of anti-flexible algebras  such that
\begin{equation}\varphi(A^{+})= B^{+},\;
\varphi(A^{-})=B^{-},\;
\mathfrak{B}_A(x,y)=\mathfrak{B}_B(\varphi(x), \varphi(y)),\;
\forall x,y\in A.\end{equation}
\end{defi}
\begin{defi} Let  $(A,\cdot)$ be an anti-flexible algebra. Suppose that ``$\circ$"  is an
anti-flexible algebra structure on the dual space $A^*$ of $A$ and
there is an anti-flexible algebra structure on the direct sum $A\oplus A^*$ of
the underlying vector spaces of $A$ and $A^*$ such that
$(A,\cdot)$ and $(A^*,\circ)$ are subalgebras and the natural
symmetric bilinear form on $A\oplus A^*$ given by
\begin{equation}\label{eq:sbl}\mathfrak{B}_d(x+a^*,y+b^*):=\langle  a^*,y\rangle +\langle
x,b^*\rangle ,\; \forall x,y\in A; a^*,b^*\in A^*,\end{equation} is
invariant, then $(A\oplus A^*,A,A^*)$ is called a
{\bf standard Manin triple of anti-flexible algebras associated to
$\mathfrak{B}_d$.}
\end{defi}

 Obviously, a standard Manin triple of anti-flexible algebras is a Manin triple of anti-flexible algebras. Conversely,
we have

\begin{pro}
Every Manin triple of anti-flexible algebras is isomorphic to a
standard one.
\end{pro}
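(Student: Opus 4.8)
The plan is to use the bilinear form $\mathfrak{B}$ to identify $A^{-}$ with the dual space $(A^{+})^{*}$ and thereby realize the given Manin triple as a standard one built on $A^{+}$. Write $P:=A^{+}$; the isotropy of $A^{+}$ and $A^{-}$ together with the nondegeneracy of $\mathfrak{B}$ will force the restriction of $\mathfrak{B}$ to a perfect pairing between $A^{+}$ and $A^{-}$, and this pairing is exactly what is needed to turn $A^{-}$ into $(A^{+})^{*}$.

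First I would define a linear map $\varphi:A^{-}\rightarrow (A^{+})^{*}$ by $\langle \varphi(y),x\rangle:=\mathfrak{B}(x,y)$ for all $x\in A^{+}$, $y\in A^{-}$. To see that $\varphi$ is injective, suppose $\varphi(y)=0$; then $\mathfrak{B}(x,y)=0$ for all $x\in A^{+}$, while the isotropy of $A^{-}$ gives $\mathfrak{B}(y',y)=0$ for all $y'\in A^{-}$. Since $A=A^{+}\oplus A^{-}$, this means $\mathfrak{B}(z,y)=0$ for every $z\in A$, so $y=0$ by nondegeneracy. The symmetric argument on the other factor shows the pairing is nondegenerate on both sides, whence $\dim A^{+}=\dim A^{-}$ and $\varphi$ is a linear isomorphism.

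Next I would define $\Phi:A\rightarrow P\oplus P^{*}$ by $\Phi(x+y):=x+\varphi(y)$ for $x\in A^{+}$, $y\in A^{-}$, which is a linear isomorphism restricting to the identity on $A^{+}$ and to $\varphi$ on $A^{-}$. Transporting the anti-flexible product of $A$ across $\Phi$ endows $P\oplus P^{*}$ with an anti-flexible algebra structure for which $\Phi$ is by construction an isomorphism of anti-flexible algebras; since $\Phi$ maps the subalgebras $A^{+}$ and $A^{-}$ onto $P$ and $P^{*}$ respectively, both $P=A^{+}$ (with its original product) and $P^{*}$ become anti-flexible subalgebras, and in particular $P^{*}$ inherits an anti-flexible product ``$\circ$''. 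It remains to check that the transported form is the canonical one. For $u=x_1+y_1$ and $v=x_2+y_2$ with $x_i\in A^{+}$, $y_i\in A^{-}$, the isotropy of $A^{\pm}$ and the symmetry of $\mathfrak{B}$ give $\mathfrak{B}(u,v)=\mathfrak{B}(x_1,y_2)+\mathfrak{B}(x_2,y_1)$, while the definition of $\mathfrak{B}_d$ in Eq.~\eqref{eq:sbl} together with $\langle\varphi(y),x\rangle=\mathfrak{B}(x,y)$ gives $\mathfrak{B}_d(\Phi(u),\Phi(v))=\langle\varphi(y_1),x_2\rangle+\langle x_1,\varphi(y_2)\rangle=\mathfrak{B}(x_2,y_1)+\mathfrak{B}(x_1,y_2)$. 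Hence $\mathfrak{B}(u,v)=\mathfrak{B}_d(\Phi(u),\Phi(v))$ for all $u,v\in A$.

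Putting this together: because $\mathfrak{B}$ is invariant and $\Phi$ is an algebra isomorphism, $\mathfrak{B}_d$ is invariant for the transported structure, so $(P\oplus P^{*},P,P^{*})$ is a standard Manin triple associated to $\mathfrak{B}_d$, and $\Phi$ is an isomorphism of Manin triples. The argument is essentially bookkeeping once the pairing is identified; the only point requiring genuine care is verifying that the $\mathfrak{B}$-pairing between $A^{+}$ and $A^{-}$ is perfect, since this is where both isotropy hypotheses and the nondegeneracy of $\mathfrak{B}$ must be combined simultaneously, and I expect this to be the step most prone to a gap if asserted too quickly.
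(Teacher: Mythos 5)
Your proposal is correct and follows essentially the same route as the paper's own (much terser) proof: identify $A^{-}$ with $(A^{+})^{*}$ via the nondegenerate invariant form and transfer the anti-flexible structure on $A^{+}\oplus A^{-}$ to $A^{+}\oplus(A^{+})^{*}$. Your version simply fills in the details the paper leaves implicit, namely the perfectness of the pairing (using both isotropy conditions) and the check that the transported form is exactly $\mathfrak{B}_d$.
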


\begin{proof} Since in this case $A^{-}$ and $(A^+)^*$ are identified by
the nondegenerate invariant bilinear form, the anti-flexible algebra structure on
$A^{-}$ is transferred to  $(A^+)^*$. Hence the anti-flexible algebra
structure on $A^{+}\oplus A^{-}$ is transferred to
$A^{+}\oplus(A^{+})^*$. Then the conclusion holds.\end{proof}

\begin{pro} \label{Propo} Let $(A,\cdot)$ be an anti-flexible algebra.
Suppose that there is an anti-flexible algebra structure
``$\circ$" on the dual space $A^*$. Then there exists an
anti-flexible algebra structure on the vector space $A\oplus A^*$
such that  $(A\oplus A^*, A,A^*)$ is a standard  Manin triple of
anti-flexible algebras associated to $\mathfrak{B}_d$ defined by
Eq.~\eqref{eq:sbl} if and only if $(A, A^*, R_{\cdot}^*,
L_{\cdot}^*, R_{\circ}^*, L_{\circ}^* )$ is a matched pair of
anti-flexible algebras.
\end{pro}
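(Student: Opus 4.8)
The plan is to reduce the whole statement to Theorem~\ref{theoo} together with an explicit determination of when $\mathfrak{B}_d$ is invariant. The key observation is that a standard Manin triple structure on $A\oplus A^*$ is, by definition, an anti-flexible algebra structure in which $A$ and $A^*$ are subalgebras; hence by the converse part of Theorem~\ref{theoo} it must arise from a matched pair $(A,A^*,l_A,r_A,l_{A^*},r_{A^*})$ via the product~\eqref{product}, and indeed reading off the mixed products $x\ast b^*$ and $a^*\ast y$ (for $x,y\in A$, $a^*,b^*\in A^*$) recovers these four bimodule maps uniquely from $\ast$. The isotropy of $A$ and $A^*$ and the symmetry of $\mathfrak{B}_d$ are automatic from~\eqref{eq:sbl}, so the only content beyond ``matched pair'' is the invariance of $\mathfrak{B}_d$. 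Thus the proposition is equivalent to the claim: for a matched pair of anti-flexible algebras, $\mathfrak{B}_d$ is invariant for~\eqref{product} if and only if $l_A=R_{\cdot}^*$, $r_A=L_{\cdot}^*$, $l_{A^*}=R_{\circ}^*$ and $r_{A^*}=L_{\circ}^*$.

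To establish this equivalence I would expand the invariance identity $\mathfrak{B}_d((x+a^*)\ast(y+b^*),z+c^*)=\mathfrak{B}_d(x+a^*,(y+b^*)\ast(z+c^*))$ using~\eqref{product} and~\eqref{eq:sbl}, and specialize the three arguments to lie in $A$ or in $A^*$. Since $\mathfrak{B}_d$ vanishes on $A\times A$ and on $A^*\times A^*$, the cases with all three arguments in $A$, or all three in $A^*$, are vacuous, and the substantive constraints come from the mixed placements. Placing the first two arguments in $A$ and the third in $A^*$ collapses the identity to $\langle l_A(y)c^*,x\rangle=\langle c^*,x\cdot y\rangle$ for all $x$, which by definition of the transpose is exactly $l_A=R_{\cdot}^*$; placing $c^*\in A^*$ first and $x,y\in A$ in the remaining slots gives $\langle r_A(x)c^*,y\rangle=\langle c^*,x\cdot y\rangle$, that is, $r_A=L_{\cdot}^*$; and the two placements with a single $A$-argument and two $A^*$-arguments give, under the symmetry exchanging $A\leftrightarrow A^*$ and $\cdot\leftrightarrow\circ$, the identifications $l_{A^*}=R_{\circ}^*$ and $r_{A^*}=L_{\circ}^*$. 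Conversely, substituting these four dual forms back into the expanded identity and using the defining relation of the transpose maps shows the two sides agree termwise, so full invariance holds. Note that $R_{\cdot}^*,L_{\cdot}^*$ and $R_{\circ}^*,L_{\circ}^*$ are genuine bimodule structures, being the duals of the regular bimodules, so these identifications are legitimate.

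Assembling the two directions is then immediate: if $(A\oplus A^*,A,A^*)$ is a standard Manin triple, Theorem~\ref{theoo} yields a matched pair and the invariance computation forces its bimodule maps to be $R_{\cdot}^*,L_{\cdot}^*,R_{\circ}^*,L_{\circ}^*$, so $(A,A^*,R_{\cdot}^*,L_{\cdot}^*,R_{\circ}^*,L_{\circ}^*)$ is a matched pair; conversely, if this six-tuple is a matched pair, Theorem~\ref{theoo} endows $A\oplus A^*$ with an anti-flexible algebra structure having $A,A^*$ as subalgebras, and the same computation in reverse shows $\mathfrak{B}_d$ is invariant, giving a standard Manin triple. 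I expect the main obstacle to be organizational rather than conceptual: one must track every mixed placement of the three arguments and confirm that the conditions extracted from the specializations are jointly \emph{sufficient} for the full invariance identity, i.e.\ that no further compatibility beyond the four dual identifications is hidden in the cross terms. This bookkeeping is where the care is needed, but it is routine once the transpose relations are used systematically.
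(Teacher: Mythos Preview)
Your proposal is correct and is precisely the standard argument that the paper has in mind: the paper's own proof consists of the single sentence ``It follows from the same proof of \cite[Theorem 2.2.1]{Bai_Double}'', i.e.\ it defers to the identical computation carried out in the associative setting, which is exactly the Theorem~\ref{theoo}-plus-invariance analysis you describe. Your anticipated obstacle is harmless: of the six mixed placements, the two ``middle'' ones (one $A^*$-argument in the second slot) follow automatically from the other four once $l_A=R_\cdot^*$, $r_A=L_\cdot^*$, $l_{A^*}=R_\circ^*$, $r_{A^*}=L_\circ^*$ are imposed, so no hidden compatibility arises.
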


\begin{proof}
It follows from the same proof of \cite[Theorem
2.2.1]{Bai_Double}.
\end{proof}

\begin{pro}\label{Theo}
Let $(A, \cdot )$ be an anti-flexible algebra. Suppose that there exists an anti-flexible algebra structure $``\circ "$ on the dual space $A^{*}$.  Then $(A, A^{*}, R_{\cdot}^*, L_{\cdot }^*, R_{\circ}^*, L_{\circ}^*)$ is a matched pair of anti-flexible algebras
if and only if for any $x,y\in A, a\in A^*$,
\begin{equation}\label{eq_matched1}
-R^*_{\circ}(a)(x\cdot y)-L^*_{\circ}(a)(y\cdot x)+L_{\circ}^*(R_{\cdot}^*(x)a)y+
y\cdot (L_{\circ}^*(a)x)+R_{\circ}^*(L_{\cdot}^*(x)a)y+(R_{\circ}^*(a)x)\cdot y=0,
\end{equation}
\begin{eqnarray}\label{eq_matched2}
\begin{array}{lll}
y\cdot (R_{\circ}^*(a)x)-x\cdot (R_{\circ}^*(a)y)+(L_{\circ}^*(a)x) \cdot y-(L_{\circ}^*(a)y)\cdot x\cr
+L_{\circ}^*(L_{\cdot}^*(x)a)y-R_{\circ}^*(R_{\cdot}^*(y)a)x +
R_{\circ}^*(R_{\cdot}^*(x)a)y-L_{\circ}^*(L_{\cdot}^*(y)a)x=0.
\end{array}
\end{eqnarray}
\end{pro}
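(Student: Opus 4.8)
The plan is to specialize the general matched pair criterion to the case at hand. By Proposition~\ref{Propo}, the six-tuple $(A, A^*, R_{\cdot}^*, L_{\cdot }^*, R_{\circ}^*, L_{\circ}^*)$ being a matched pair means precisely that the four structural equations \eqref{eqq1}--\eqref{eqq4} from Theorem~\ref{theoo} hold, with the dictionary $l_A = R_{\cdot}^*$, $r_A = L_{\cdot}^*$, $l_B = R_{\circ}^*$, $r_B = L_{\circ}^*$, and with the product on $B = A^*$ being ``$\circ$''. Before substituting, I would first confirm the hypotheses of Theorem~\ref{theoo} that are automatic here: namely that $(R_{\cdot}^*, L_{\cdot}^*, A^*)$ is a bimodule of $(A,\cdot)$ and $(R_{\circ}^*, L_{\circ}^*, A)$ is a bimodule of $(A^*,\circ)$. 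These follow from the dual-bimodule proposition proved above applied to the regular bimodules $(L,R,A)$ of $(A,\cdot)$ and $(L_\circ, R_\circ, A^*)$ of $(A^*,\circ)$, so the only content left is to verify \eqref{eqq1}--\eqref{eqq4} translate into \eqref{eq_matched1}--\eqref{eq_matched2}.

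First I would write out \eqref{eqq1} with the substitutions $l_B = R_{\circ}^*$, $r_B = L_{\circ}^*$, $l_A = R_{\cdot}^*$, $r_A = L_{\cdot}^*$, and with the roles of $x,y \in A$ and $a \in A^* = B$ in place of the generic elements. The key observation is that after substitution, the terms of \eqref{eqq1} should reproduce (up to an overall sign) exactly the terms of \eqref{eq_matched1}: the product $l_B(a)(x\cdot y)$ becomes $R_{\circ}^*(a)(x\cdot y)$, the term $r_B(a)(y\cdot x)$ becomes $L_{\circ}^*(a)(y\cdot x)$, and the mixed terms $r_B(l_A(x)a)y$, $y\cdot(r_B(a)x)$, $l_B(r_A(x)a)y$, $(l_B(a)x)\cdot y$ become $L_{\circ}^*(R_{\cdot}^*(x)a)y$, $y\cdot(L_{\circ}^*(a)x)$, $R_{\circ}^*(L_{\cdot}^*(x)a)y$, $(R_{\circ}^*(a)x)\cdot y$ respectively. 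Matching signs term-by-term should show \eqref{eqq1} $\Leftrightarrow$ \eqref{eq_matched1}.

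Next I would treat \eqref{eqq3} the same way and check that it reproduces \eqref{eq_matched2}; this is the equation carrying the genuinely anti-flexible content (the symmetrization in $x,y$), so it is where I expect the bookkeeping to be heaviest. The remaining two equations \eqref{eqq2} and \eqref{eqq4} are the ``dual'' versions obtained by swapping the roles of $A$ and $A^* = B$. The crucial point is that, because the dual bimodules of anti-flexible algebras have the same form as in the associative case (Remark~\ref{rmk:same}), the equations \eqref{eqq2} and \eqref{eqq4} are not independent: applying \eqref{eq_matched1} and \eqref{eq_matched2} to the dual anti-flexible algebra $(A^*, \circ)$ and pairing against an arbitrary element of $A$ recovers \eqref{eqq2} and \eqref{eqq4} respectively. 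Concretely, I would pair each of \eqref{eqq2}, \eqref{eqq4} against a test element and use the definition of the dualized operators $R_{\cdot}^*, L_{\cdot}^*$ etc.\ to transfer everything onto $A^*$, showing that \eqref{eqq2}, \eqref{eqq4} are the transposes of \eqref{eqq1}, \eqref{eqq3}.

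The main obstacle I anticipate is purely the sign-and-index bookkeeping in transposing the mixed terms: each term such as $(l_B(a)x)\cdot y$ must be re-expressed via the defining pairings $\langle R_{\circ}^*(a)\xi, \eta\rangle = \langle \xi, R_\circ(\eta)a\rangle$ and $\langle L_{\cdot}^*(x)a, z\rangle = \langle a, L_\cdot(z) x\rangle$, and it is easy to misplace a dual on one of the four operators or drop a sign when moving between \eqref{eqq1}--\eqref{eqq4} and \eqref{eq_matched1}--\eqref{eq_matched2}. Since Proposition~\ref{Propo} already cites \cite[Theorem 2.2.1]{Bai_Double} for the associative analogue and the forms coincide, I would lean on that correspondence to organize the verification: establish that \eqref{eqq1}$\Leftrightarrow$\eqref{eq_matched1} and \eqref{eqq3}$\Leftrightarrow$\eqref{eq_matched2} directly, then argue \eqref{eqq2} and \eqref{eqq4} are the $A^*$-side duals and hence redundant once the first two hold, completing the equivalence.
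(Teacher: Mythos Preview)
Your approach is correct and essentially the same as the paper's: specialize the matched-pair axioms with $l_A=R_\cdot^*,\,r_A=L_\cdot^*,\,l_B=R_\circ^*,\,r_B=L_\circ^*$, identify \eqref{eq_matched1}, \eqref{eq_matched2} as the specializations of \eqref{eqq1}, \eqref{eqq3} (the paper writes \eqref{eqq2}, \eqref{eqq4} here, but those are equations in $A^*$, so this looks like a slip), and then use the natural pairing to show \eqref{eqq1}$\Leftrightarrow$\eqref{eqq2} and \eqref{eqq3}$\Leftrightarrow$\eqref{eqq4}, so the other two axioms are automatic. The paper carries out exactly this term-by-term pairing computation you describe.
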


\begin{proof}
Obviously, Eq.~(\ref{eq_matched1}) is exactly Eq.~(\ref{eqq2})
and Eq.~(\ref{eq_matched2}) is exactly Eq.~(\ref{eqq4})
in the case
$l_A=R^*_\cdot, r_A=L^*_\cdot$,
$l_B=l_{A^*}=R^*_{\circ},r_B=r_{A^*}=L^*_\circ$.
 For any $x,y\in A, a, b\in A^*$, we have:
\begin{eqnarray*}
&&\left<R^*_{\circ}(a)(x \cdot y), b    \right>=\left< x\cdot y, R_{\circ}(a)b   \right>
=\left< x\cdot y , b\circ a   \right>=\left< L_{\cdot}(x)y , b\circ a  \right>=
\left<y, L_{\cdot}^*(x)(b\circ a)    \right>;\\
&&\left<L^*_{\circ}(a)(y\cdot x), b  \right>=\left< y\cdot x, L_{\circ}(a)b   \right>
=\left<y\cdot x, a\circ b    \right>=\left< R_{\cdot}(x) y, a\circ b \right>=
\left<y, R^*_{\cdot}(x)(a\circ b)    \right>;\\
&&\left< L_{\circ}^*(R_{\cdot}^*(x)a)y, b   \right>=\left< y,L_{\circ}(R_{\cdot}^*(x)a)b   \right>
=\left<y, (R_{\cdot}^*(x)a)\circ b    \right>;\\
&&\left<y\cdot (L_{\circ}^*(a)x), b    \right>=
\left< R_{\cdot}(L_{\circ}^*(a)x)y, b   \right>=
\left< y,  R_{\cdot}^*(L_{\circ}^*(a)x)b \right>;\\
&&\left< R_{\circ}^*(L_{\cdot}^*(x)a)y, b   \right>=
\left< y, R_{\circ}(L_{\cdot}^*(x)a)b   \right>=
\left< y, b\circ(L_{\cdot}^*(x)a)    \right>;\\
&&\left<(R_{\circ}^*(a)x)\cdot y, b    \right>=
\left<  L_{\cdot}(R_{\circ}^*(a)x)y, b  \right>=
\left<y, L_{\cdot}^*(R_{\circ}^*(a)x)b    \right>.
\end{eqnarray*}
Then Eq.~(\ref{eqq1}) holds if and only if Eq.~(\ref{eqq2}) holds.
Similarly, Eq.~(\ref{eqq3}) holds if and only if Eq.~(\ref{eqq4}) holds.
Therefore the conclusion holds.
\end{proof}

 Let $V$ be a vector space. Let $\sigma: V\otimes V \rightarrow
V\otimes V$ be the {\it flip} defined as
\begin{equation}\sigma(x \otimes y) = y\otimes x,\quad
\forall x, y\in V.
\end{equation}

\begin{thm}\label{thm:bialgebra}
Let $(A,\cdot)$ be an anti-flexible algebra. Suppose there is an
anti-flexible algebra structure $``\circ"$ on its dual space $A^*$
given by a linear map $\Delta^*: A^*\otimes A^*\rightarrow A^*$.
Then $(A,A^*, R_\cdot^*,L_\cdot^*,R_\circ^*,L_\circ^*)$ is a matched
pair of anti-flexible algebras if and only if $\Delta:A\rightarrow
A\otimes A $ satisfies the following two conditions:
\begin{equation}\label{eq_bialg1}
\Delta( x\cdot y)+\sigma\Delta( y\cdot x)=(\sigma(\id \otimes  L_{\cdot}(y))+R_{\cdot}(y)\otimes \id)\Delta (x)+
(\sigma( R_{\cdot}(x)\otimes \id)+\id\otimes L_{\cdot}(x))\Delta (y),
\end{equation}
\begin{equation}\label{eq_bialg2}
\begin{array}{lll}
(\sigma(\id \otimes R_{\cdot}(y))-\id\otimes R_{\cdot}(y)-
\sigma(L_{\cdot}(y)\otimes\id)
+L_{\cdot}(y)\otimes\id)\Delta(x)=\cr
(\sigma(\id\otimes R_{\cdot}(x))
-\id\otimes R_{\cdot}(x)
-\sigma(L_{\cdot}(x)\otimes\id)+
L_{\cdot}(x)\otimes\id
)\Delta(y),
\end{array}
\end{equation}
for any $x,y\in A$.
\end{thm}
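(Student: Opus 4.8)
The plan is to build directly on Proposition~\ref{Theo}, which already reduces the matched pair condition for $(A,A^*,R_\cdot^*,L_\cdot^*,R_\circ^*,L_\circ^*)$ to the pair of operator identities \eqref{eq_matched1} and \eqref{eq_matched2}. It then suffices to prove that \eqref{eq_matched1} is equivalent to \eqref{eq_bialg1} and that \eqref{eq_matched2} is equivalent to \eqref{eq_bialg2}, where $\Delta:A\rightarrow A\otimes A$ is the linear dual of the product $\Delta^*$ defining $\circ$.

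The heart of the argument is a dictionary relating the dual operators on $A$ to $\Delta$. Since $\circ$ is dual to $\Delta$, for all $a\in A^*$ and $x\in A$ one has
\[
L_\circ^*(a)x=(a\otimes\id)\Delta(x),\qquad R_\circ^*(a)x=(\id\otimes a)\Delta(x),
\]
so that $L_\circ^*(a)$ contracts $a$ against the first tensor factor of $\Delta(x)$ and $R_\circ^*(a)$ against the second. For the operators coming from the product $\cdot$ I would use $\langle y,R_\cdot^*(x)a\rangle=\langle y\cdot x,a\rangle$ and $\langle y,L_\cdot^*(x)a\rangle=\langle x\cdot y,a\rangle$; a nested term such as $L_\circ^*(R_\cdot^*(x)a)y$ then becomes a contraction of $a$ against $R_{\cdot}(x)$ applied to a tensor slot of $\Delta(y)$.

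With this dictionary, I would view each term of \eqref{eq_matched1} as a linear map $A^*\rightarrow A$, $a\mapsto(\cdots)$, hence as an element of $A\otimes A$ under the finite-dimensional identification in which $a$ is contracted against the second tensor factor. Reading off the six terms reproduces exactly \eqref{eq_bialg1}: the terms $-R_\circ^*(a)(x\cdot y)$ and $-L_\circ^*(a)(y\cdot x)$ become $-\Delta(x\cdot y)$ and $-\sigma\Delta(y\cdot x)$, which after transposition form the left-hand side $\Delta(x\cdot y)+\sigma\Delta(y\cdot x)$ of \eqref{eq_bialg1}, while $y\cdot(L_\circ^*(a)x)$, $(R_\circ^*(a)x)\cdot y$, $L_\circ^*(R_\cdot^*(x)a)y$ and $R_\circ^*(L_\cdot^*(x)a)y$ give respectively $\sigma(\id\otimes L_{\cdot}(y))\Delta(x)$, $(R_{\cdot}(y)\otimes\id)\Delta(x)$, $\sigma(R_{\cdot}(x)\otimes\id)\Delta(y)$ and $(\id\otimes L_{\cdot}(x))\Delta(y)$. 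Since the operator identity holds for every $a\in A^*$ if and only if the corresponding tensor identity holds, this establishes \eqref{eq_matched1}$\Leftrightarrow$\eqref{eq_bialg1}; running the same computation on \eqref{eq_matched2} yields \eqref{eq_bialg2}.

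The flip $\sigma$ is the genuine bookkeeping hazard: it appears precisely when the slot contracted against $a$ is the first tensor factor of $\Delta$ rather than the second, so each term must be normalized to a common convention before the tensors can be compared. The main obstacle is therefore not conceptual but the disciplined tracking of tensor-factor placement, of the left and right multiplication operators, and of the $\sigma$'s across all eight terms of the two equations; getting every flip into the correct slot is exactly what identifies the result with \eqref{eq_bialg1} and \eqref{eq_bialg2}.
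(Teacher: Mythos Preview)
Your proposal is correct and follows essentially the same approach as the paper: invoke Proposition~\ref{Theo} to reduce the matched-pair condition to \eqref{eq_matched1}--\eqref{eq_matched2}, then translate each term via duality into tensor form and match with \eqref{eq_bialg1}--\eqref{eq_bialg2}. The only cosmetic difference is that you identify each term of \eqref{eq_matched1} as a map $A^*\to A$ and hence an element of $A\otimes A$ (contracting $a$ against the second slot), whereas the paper pairs the tensor equation with $a\otimes b\in A^*\otimes A^*$ and lands on \eqref{eq_matched1} with the roles of $x$ and $y$ interchanged; both bookkeeping conventions are equivalent and your term-by-term identifications check out.
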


\begin{proof}
For any $x,y\in A$ and any $a,b\in A^*$, we have
\begin{eqnarray*}
&&\langle \Delta(x\cdot y), a\otimes b\rangle = \langle x\cdot y,
a\cdot b\rangle, =\langle L_{\circ}^*(a)(x\cdot y), b\rangle, \cr
&&\langle \sigma\Delta(y\cdot x), a\otimes b\rangle = \langle
y\cdot x, b\circ a\rangle = \langle R_{\circ}^*(a)(y\cdot x),
b\rangle, \cr &&\langle \sigma(\id\otimes L_{\cdot}(y))\Delta(x),
a\otimes b\rangle = \langle x, b\circ(L_{\cdot}^*(y)a)\rangle =
\langle R_{\circ}^*(L_{\cdot}^*(y)a)x, b\rangle, \cr &&\langle
(R_{\cdot}(y)\otimes\id)\Delta(x), a\otimes b\rangle = \langle x,
(R_{\cdot}^*(y)a)\circ b\rangle = \langle
L_{\circ}^*(R_{\cdot}^*(y)a)x,b\rangle, \cr &&\langle
\sigma(R_{\cdot}(x)\otimes \id)\Delta(y), a\otimes b\rangle =
\langle y, (R_{\cdot}^*(x)b)\circ a\rangle = \langle
(R_{\circ}^*(a)y)\cdot x, b\rangle, \cr &&\langle (\id\otimes
L_{\cdot}(x))\Delta(y), a\otimes b\rangle = \langle y, a\circ
(L_{\cdot}^*(x)b)\rangle = \langle x\cdot(L_{\circ}^*(a)y),
b\rangle.
\end{eqnarray*}
Then Eq.~\eqref{eq_matched1} is equivalent to
Eq.~\eqref{eq_bialg1}. Moreover, we have
\begin{eqnarray*}
&&\langle \sigma(\id\otimes R_{\cdot}(y))\Delta(x), a\otimes
b\rangle = \langle x, b\circ (R_{\cdot}^*(y)a)\rangle = \langle
R_{\circ}^*(R_{\cdot}^*(y)a)x, b\rangle, \cr &&\langle (\id\otimes
R_{\cdot}(y))\Delta(x), a\otimes b\rangle = \langle x, a\circ
(R_{\cdot}^*(y)b)\rangle = \langle (L_{\circ}^*(a)x)\cdot y,
b\rangle, \cr &&\langle \sigma(L_{\cdot}(y)\otimes \id)\Delta(x),
a\otimes b\rangle = \langle x, (L_{\cdot}^*(y)b)\circ a\rangle =
\langle y\cdot(R_{\circ}^*(a)x), b\rangle, \cr &&\langle
(L_{\cdot}(y)\otimes \id)\Delta(x), a\otimes b\rangle = \langle x,
(L_{\cdot}^*(y)a)\circ b\rangle = \langle
L_{\circ}^*(L_{\cdot}^*(y)a)x,  b\rangle.
\end{eqnarray*}
Then Eq.~\eqref{eq_matched2} is equivalent to
Eq.~\eqref{eq_bialg2}. Hence the conclusion holds.
\end{proof}

\begin{rmk}\label{rmk:dual}
From the symmetry of the anti-flexible algebras $(A, \cdot)$ and $(A^*, \circ)$ in the
standard Manin triple of anti-flexible algebras associated to
$\mathfrak{B}_d$, we also can consider a linear map
$\gamma: A^*\rightarrow A^*\otimes A^*$ such that $\gamma^*:A\otimes A\rightarrow A$ gives the anti-flexible algebra structure ``$\cdot$" on $A$.
It is straightforward to show that $\Delta$ satisfies Eqs.~\eqref{eq_bialg1} and \eqref{eq_bialg2}
if and only if $\gamma$ satisfies
\begin{equation}\label{eq_dual_bialg1}
\gamma( a\circ b)+\sigma\gamma(b\circ a)=(\sigma(\id \otimes  L_{\circ}(b))+R_{\circ}(b)\otimes \id)\gamma (a)+
(\sigma( R_{\circ}(a)\otimes \id)+\id\otimes L_{\circ}(a))\gamma (b),
\end{equation}
\begin{equation}\label{eq_dual_bialg2}
\begin{array}{lll}
(\sigma(\id \otimes R_{\circ}(b))-\id\otimes R_{\circ}(b)-
\sigma(L_{\circ}(b)\otimes\id)
+(L_{\circ}(b)\otimes\id))\gamma(a)=\cr ((L_{\circ}(a)\otimes\id)-\sigma(L_{\circ}(a)\otimes\id)
+\sigma(\id\otimes R_{\circ}(a))
-(\id\otimes R_{\circ}(a)))\gamma(b),
\end{array}
\end{equation}
for any $a,b\in A^*$.
\end{rmk}

\begin{defi}
Let $(A,\cdot)$ be an anti-flexible algebra. An {\bf  anti-flexible bialgebra} structure on $A$ is a linear map
$\Delta:A\rightarrow A\otimes A$ such that
\begin{enumerate}
\item $\Delta^*:A^*\otimes A^*\rightarrow A^*$ defines an anti-flexible
algebra structure on $A^*$;
\item $\Delta$ satisfies Eqs.~\eqref{eq_bialg1} and \eqref{eq_bialg2}.
\end{enumerate}
We denote it by $(A,\Delta)$ or $(A,A^*)$.
\end{defi}

\begin{ex} \label{ex:dual} Let $(A,\Delta)$ be an anti-flexible bialgebra on an anti-flexible algebra $A$. Then $(A^*,\gamma)$ is an
anti-flexible bialgebra on the anti-flexible algebra $A^*$, where
$\gamma$ is given in Remark~\ref{rmk:dual}.
\end{ex}

Combining Proposition~\ref{Propo} and Theorem~\ref{thm:bialgebra}
together, we have the following conclusion.

\begin{thm}
Let $(A, \cdot)$ be an anti-flexible algebra. Suppose that there is an anti-flexible algebra structure on its dual space
$A^*$ denoted ``$\circ$" which is defined by a linear map $\Delta:A\rightarrow A\otimes A$.
Then
the following conditions are equivalent.
\begin{enumerate}
\item $(A\oplus A^*, A, A^*)$ is a standard Manin triple of
anti-flexible algebras associated to $\mathfrak{B}_d$ defined by
Eq.~\eqref{eq:sbl}. \item $(A, A^*, R_{\cdot}^*, L_{\cdot}^*,
R_{\circ}^*, L_{\circ}^*)$ is a matched pair of anti-flexible
algebras. \item $(A, \Delta)$ is an anti-flexible bialgebra.
\end{enumerate}
\end{thm}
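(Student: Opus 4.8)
The plan is to assemble the three equivalences directly from the results already in hand, so that essentially no fresh computation is required: the theorem is a bookkeeping statement tying together Proposition~\ref{Propo}, Theorem~\ref{thm:bialgebra}, and the definition of an anti-flexible bialgebra, all applied under the standing hypothesis that the product $``\circ"$ on $A^*$ is the one determined by $\Delta$ (that is, $\circ=\Delta^*$). I would prove the chain $(1)\Leftrightarrow(2)\Leftrightarrow(3)$ in two steps.

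First I would establish $(1)\Leftrightarrow(2)$. Under the standing hypothesis, $A^*$ already carries an anti-flexible algebra structure $``\circ"$, so Proposition~\ref{Propo} applies verbatim: there is an anti-flexible algebra structure on $A\oplus A^*$ making $(A\oplus A^*, A, A^*)$ a standard Manin triple associated to $\mathfrak{B}_d$ of Eq.~\eqref{eq:sbl} if and only if $(A, A^*, R_\cdot^*, L_\cdot^*, R_\circ^*, L_\circ^*)$ is a matched pair of anti-flexible algebras. This is exactly the equivalence of $(1)$ and $(2)$.

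Next I would establish $(2)\Leftrightarrow(3)$. By Theorem~\ref{thm:bialgebra}, the six-tuple $(A, A^*, R_\cdot^*, L_\cdot^*, R_\circ^*, L_\circ^*)$ is a matched pair if and only if $\Delta$ satisfies Eqs.~\eqref{eq_bialg1} and \eqref{eq_bialg2}. On the other hand, by the definition of an anti-flexible bialgebra, $(A,\Delta)$ is an anti-flexible bialgebra precisely when $\Delta^*$ defines an anti-flexible algebra structure on $A^*$ and $\Delta$ satisfies Eqs.~\eqref{eq_bialg1} and \eqref{eq_bialg2}. Since the standing hypothesis is exactly that $\Delta^*=\circ$ is such a structure, the first clause of the bialgebra definition is automatically satisfied, so $(3)$ reduces to $\Delta$ satisfying Eqs.~\eqref{eq_bialg1} and \eqref{eq_bialg2}, hence is equivalent to $(2)$. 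Chaining the two equivalences gives $(1)\Leftrightarrow(2)\Leftrightarrow(3)$.

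The point to watch—rather than a genuine obstacle—is to use the standing hypothesis consistently in both halves: it is what permits the invocation of Proposition~\ref{Propo} in the first step and what renders the first clause of the bialgebra definition automatic in the second. All the substantive content, namely the translation between the matched-pair relations \eqref{eqq1}--\eqref{eqq4} specialized to $l_A=R_\cdot^*$, $r_A=L_\cdot^*$, $l_B=R_\circ^*$, $r_B=L_\circ^*$ and the coalgebraic compatibility conditions \eqref{eq_bialg1}--\eqref{eq_bialg2}, has already been carried out inside Theorem~\ref{thm:bialgebra}, and the Manin-triple/matched-pair correspondence inside Proposition~\ref{Propo}, so the proof is a short combination requiring no further calculation.
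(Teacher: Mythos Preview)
Your proposal is correct and is essentially identical to the paper's own proof, which simply reads ``Combining Proposition~\ref{Propo} and Theorem~\ref{thm:bialgebra} together, we have the following conclusion.'' You have merely spelled out in detail the two equivalences $(1)\Leftrightarrow(2)$ and $(2)\Leftrightarrow(3)$ that the paper leaves implicit, including the observation that the standing hypothesis on $\Delta^*$ makes the first clause of the bialgebra definition automatic.
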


Recall a Lie bialgebra structure on a Lie algebra $\frak g$ is a linear map $\delta:\frak g\rightarrow \frak g\otimes
\frak g$ such that $\delta^*:\frak g^*\otimes \frak g^*\rightarrow \frak g^*$ defines a Lie algebra structure on $\frak g^*$ and
$\delta$ satisfies
\begin{equation}
\delta[x,y]=({\rm ad}(x)\otimes {\rm id}+{\rm id}\otimes {\rm ad}(x))\delta(y)-({\rm ad}(y)\otimes {\rm id}+{\rm id}\otimes {\rm ad}(y))\delta(x),\;\;
\forall x,y\in \frak g,
\end{equation}
where ${\rm ad}(x)(y)=[x,y]$ for any $x,y\in\frak g$. We denoted it by $(\frak g,\delta)$.

\begin{pro}
Let $(A, \Delta)$ be an anti-flexible bialgebra.  Then $(\frak g(A),\delta)$ is a Lie bialgebra, where $\delta=\Delta-\sigma\Delta$.
\end{pro}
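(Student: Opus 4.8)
The plan is to verify the two defining conditions of a Lie bialgebra for the pair $(\frak g(A),\delta)$ with $\delta=\Delta-\sigma\Delta$: first, that $\delta^*$ endows $\frak g(A)^*$ with a Lie algebra structure; second, that $\delta$ satisfies the Lie cobracket ($1$-cocycle) identity recalled just above the statement. The first condition is essentially free, while the second is where the anti-flexible bialgebra axioms are used.

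For the first condition I would note that $\sigma^*=\sigma$ (the flip is self-dual), so dualizing $\delta=\Delta-\sigma\Delta$ gives $\delta^*=\Delta^*-\Delta^*\sigma$. Writing the anti-flexible product on $A^*$ as $a\circ b=\Delta^*(a\otimes b)$, this reads $\delta^*(a\otimes b)=a\circ b-b\circ a$ for all $a,b\in A^*$; that is, $\delta^*$ is exactly the commutator of $(A^*,\circ)$. Since an anti-flexible algebra is Lie-admissible, this commutator is a Lie bracket, namely that of the associated Lie algebra $\frak g(A^*)$. Under the canonical identification $\frak g(A)^*=A^*=\frak g(A^*)$, the first condition holds with no further work.

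For the second condition I would compute $\delta[x,y]=(\Delta-\sigma\Delta)(x\cdot y-y\cdot x)$ by regrouping it as $\bigl(\Delta(x\cdot y)+\sigma\Delta(y\cdot x)\bigr)-\bigl(\Delta(y\cdot x)+\sigma\Delta(x\cdot y)\bigr)$ and applying Eq.~\eqref{eq_bialg1} to the first bracket and its $x\leftrightarrow y$ swap to the second. This expresses $\delta[x,y]$ as an explicit combination of $\Delta(x),\sigma\Delta(x),\Delta(y),\sigma\Delta(y)$ acted on by left and right multiplication operators. Independently, I would expand the cocycle right-hand side $(\ad(x)\otimes\id+\id\otimes\ad(x))\delta(y)-(\ad(y)\otimes\id+\id\otimes\ad(y))\delta(x)$ using $\ad(x)=L_{\cdot}(x)-R_{\cdot}(x)$ and $\delta=\Delta-\sigma\Delta$, together with the flip rules $\sigma(M\otimes\id)=(\id\otimes M)\sigma$ and $\sigma(\id\otimes M)=(M\otimes\id)\sigma$. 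Subtracting the two expressions, the coefficients of $\Delta(x),\sigma\Delta(x),\Delta(y),\sigma\Delta(y)$ collapse, and the resulting identity is precisely Eq.~\eqref{eq_bialg2} rewritten via the same flip rules; hence the difference vanishes by hypothesis.

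The main obstacle is the bookkeeping in the second condition: one must track four operator coefficients simultaneously and recognize that the leftover after subtraction is exactly the second bialgebra axiom rather than a genuinely new relation. As a conceptual cross-check, the same result also follows by passing to associated Lie algebras in the standard Manin triple $(A\oplus A^*,A,A^*)$: invariance of $\mathfrak{B}_d$ for $\ast$ forces its $\ad$-invariance for the commutator (a short symmetry argument), so $(\frak g(A\oplus A^*),\frak g(A),\frak g(A^*))$ is a Manin triple of Lie algebras, and the classical correspondence between Lie Manin triples and Lie bialgebras returns exactly $(\frak g(A),\delta)$ with $\delta$ dual to the bracket $\delta^*$ of $\frak g(A^*)$.
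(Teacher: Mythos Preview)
Your proposal is correct and is precisely the direct verification the paper has in mind: the paper's entire proof reads ``It is straightforward.'' Your regrouping of $\delta[x,y]$ via Eq.~\eqref{eq_bialg1} and its $x\leftrightarrow y$ swap is exactly right, and the leftover after subtracting the cocycle right-hand side is indeed $(\id-\sigma)\bigl[(L_\cdot(y)\otimes\id-\id\otimes R_\cdot(y))\Delta(x)-(L_\cdot(x)\otimes\id-\id\otimes R_\cdot(x))\Delta(y)\bigr]$, which is Eq.~\eqref{eq_bialg2} on the nose; your Manin-triple cross-check is a pleasant bonus the paper does not mention.
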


\begin{proof}
It is straightforward.
\end{proof}

\section{A special class of anti-flexible bialgebras}

In this section, we consider a special class of anti-flexible bialgebras, 
that is, the anti-flexible bialgebra $(A,\Delta)$ on an anti-flexible algebra $(A,\cdot)$, with the linear map $\Delta$ defined by
\begin{equation}
\label{eq_coboundary}
\Delta(x)=(\id\otimes L_{\cdot}(x))\mathrm{r}+ 
(R_{\cdot}(x)\otimes\id)\sigma \mathrm{r},\;\;\forall x\in A,
\end{equation}
where $\mathrm{r}\in A\otimes A$.

\begin{lem}\label{lem:sigma}
Let $(A, \cdot )$ be an anti-flexible algebra and $\mathrm{r}\in A\otimes A$. Let $\Delta:A\rightarrow A\otimes A$ be a linear map defined by Eq.~\eqref{eq_coboundary}.
Then
\begin{equation}\label{eq:sigma_coboundary}
\sigma \Delta(x)=(L_{\cdot}(x)\otimes\id)\sigma \mathrm{r}+ (\id\otimes R_{\cdot}(x))\mathrm{r},\;\;\forall x\in A.
\end{equation}
\end{lem}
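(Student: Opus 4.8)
The plan is to compute $\sigma\Delta(x)$ directly by applying the flip $\sigma$ to the defining formula Eq.~\eqref{eq_coboundary}, using the basic functoriality of $\sigma$ with respect to the tensor product. The key observation is that for any two linear maps $f,g$ on $A$ and any tensor $\mathrm{r}\in A\otimes A$, one has the intertwining relation
\begin{equation*}
\sigma(f\otimes g)=(g\otimes f)\sigma.
\end{equation*}
This is immediate from the definition $\sigma(u\otimes v)=v\otimes u$, since $\sigma(f\otimes g)(u\otimes v)=\sigma(fu\otimes gv)=gv\otimes fu=(g\otimes f)(v\otimes u)=(g\otimes f)\sigma(u\otimes v)$. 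Also $\sigma^2=\id$.

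First I would write $\sigma\Delta(x)=\sigma(\id\otimes L_{\cdot}(x))\mathrm{r}+\sigma(R_{\cdot}(x)\otimes\id)\sigma\mathrm{r}$. For the first summand, applying the intertwining relation gives $\sigma(\id\otimes L_{\cdot}(x))=(L_{\cdot}(x)\otimes\id)\sigma$, so the first term becomes $(L_{\cdot}(x)\otimes\id)\sigma\mathrm{r}$, which matches the first term on the right-hand side of Eq.~\eqref{eq:sigma_coboundary}. For the second summand, the intertwining relation gives $\sigma(R_{\cdot}(x)\otimes\id)=(\id\otimes R_{\cdot}(x))\sigma$, and then $\sigma\mathrm{r}$ is acted on by this, yielding $(\id\otimes R_{\cdot}(x))\sigma\sigma\mathrm{r}=(\id\otimes R_{\cdot}(x))\mathrm{r}$ since $\sigma^2=\id$. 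This matches the second term of Eq.~\eqref{eq:sigma_coboundary}. Adding the two contributions gives precisely the claimed formula for all $x\in A$.

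This is a purely formal manipulation and there is no real obstacle; the only point requiring minor care is tracking the order in which $\sigma$ is pushed past the tensor-factor operators and remembering to cancel the doubled flip $\sigma^2=\id$ in the second term. No anti-flexibility of $(A,\cdot)$ is actually used here—the lemma holds for any bilinear product—so the proof is valid with $L_{\cdot}$ and $R_{\cdot}$ treated as arbitrary linear operators on $A$.
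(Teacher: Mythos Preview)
Your proof is correct and is exactly the straightforward computation the paper has in mind; the paper itself simply writes ``It is straightforward'' without spelling out the details. Your remark that anti-flexibility plays no role here is also accurate.
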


\begin{proof}
It is straightforward.
\end{proof}

\begin{pro}\label{pro:sum}
Let $(A, \cdot )$ be an anti-flexible algebra and $\mathrm{r}\in A\otimes A$. Let
$\Delta:A\rightarrow A\otimes A$ be a linear map defined by
Eq.~\eqref{eq_coboundary}.
\begin{enumerate}
\item  Eq.~\eqref{eq_bialg1} holds if and only if
\begin{equation}\label{eq_coboundary1}
(L_{\cdot}(x)\otimes R_{\cdot}(y)+R_{\cdot}(x)\otimes L_{\cdot}(y))(\mathrm{r}+\sigma \mathrm{r})=0,\;\;\forall x,y\in A.
\end{equation}
\item  Eq.~\eqref{eq_bialg2} holds if and only if
\begin{equation}\label{eq_coboundary2}
(R_{\cdot}(x)\otimes R_{\cdot}(y)- R_{\cdot}(y)\otimes R_{\cdot}(x)+
L_{\cdot}(x)\otimes L_{\cdot}(y)-L_{\cdot}(y)\otimes L_{\cdot}(x))(\mathrm{r}+\sigma \mathrm{r})=0,\;\;\forall x,y\in A.
\end{equation}
\end{enumerate}

\end{pro}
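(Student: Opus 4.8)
The plan is to prove each equivalence by direct substitution, exploiting the explicit form of $\Delta$ in Eq.~\eqref{eq_coboundary} and the companion formula for $\sigma\Delta$ from Lemma~\ref{lem:sigma}. First I would address part (a). The right-hand side of Eq.~\eqref{eq_bialg1} applies the four operators $\sigma(\id\otimes L_\cdot(y))$, $R_\cdot(y)\otimes\id$, $\sigma(R_\cdot(x)\otimes\id)$, and $\id\otimes L_\cdot(x)$ to $\Delta(x)$ and $\Delta(y)$, while the left-hand side is $\Delta(x\cdot y)+\sigma\Delta(y\cdot x)$. Substituting the expressions for $\Delta$ and $\sigma\Delta$ into both sides and expanding everything in terms of $\mathrm{r}$ and $\sigma\mathrm{r}$, I expect a large collection of terms of the shape $(P\otimes Q)\mathrm{r}$ and $(P\otimes Q)\sigma\mathrm{r}$ where $P,Q$ are products of left/right multiplications. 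The strategy is to collect these terms and show, using the defining anti-flexible identity \eqref{eq:afa} together with the operator relations it implies (for instance the associativity-type and center-symmetry relations among $L_\cdot, R_\cdot$), that all terms cancel except precisely those assembling into $(L_\cdot(x)\otimes R_\cdot(y)+R_\cdot(x)\otimes L_\cdot(y))(\mathrm{r}+\sigma\mathrm{r})$. Thus Eq.~\eqref{eq_bialg1} reduces to the vanishing of that single expression, which is Eq.~\eqref{eq_coboundary1}.

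For part (b) I would proceed analogously with Eq.~\eqref{eq_bialg2}. Here each side is built from the four operators $\sigma(\id\otimes R_\cdot(\,\cdot\,))$, $\id\otimes R_\cdot(\,\cdot\,)$, $\sigma(L_\cdot(\,\cdot\,)\otimes\id)$, and $L_\cdot(\,\cdot\,)\otimes\id$ applied to $\Delta(x)$ or $\Delta(y)$. After substituting Eq.~\eqref{eq_coboundary} and Eq.~\eqref{eq:sigma_coboundary}, the flips $\sigma$ interchange the two tensor slots, converting occurrences of $\mathrm{r}$ into $\sigma\mathrm{r}$ and vice versa; the plan is to track these carefully so that, again, terms regroup into the combination $\mathrm{r}+\sigma\mathrm{r}$. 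After cancellation via the anti-flexible identity, what should survive is exactly $(R_\cdot(x)\otimes R_\cdot(y)-R_\cdot(y)\otimes R_\cdot(x)+L_\cdot(x)\otimes L_\cdot(y)-L_\cdot(y)\otimes L_\cdot(x))(\mathrm{r}+\sigma\mathrm{r})$, giving Eq.~\eqref{eq_coboundary2}.

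The main obstacle is purely bookkeeping: the expansions produce many terms, and the reduction hinges on applying the anti-flexible relation~\eqref{eq:afa} in its operator form to pair up and cancel the mixed $\mathrm{r}$ and $\sigma\mathrm{r}$ contributions correctly. The key organizing principle that makes this tractable is that every genuine relation forced by \eqref{eq_bialg1} and \eqref{eq_bialg2} factors through the symmetric combination $\mathrm{r}+\sigma\mathrm{r}$; the terms involving $\mathrm{r}$ alone (antisymmetric part) cancel identically by the defining identity of the anti-flexible algebra, independently of any constraint on $\mathrm{r}$. Recognizing and exploiting this separation between the antisymmetric part (which vanishes automatically) and the symmetric part (which carries the actual condition) is what reduces each bialgebra axiom to the single compact equation stated, and I expect this to be the crux of the argument rather than any deep structural input. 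Since the statement asserts an ``if and only if,'' I would also note that the manipulations are reversible, so the vanishing of \eqref{eq_coboundary1} (respectively \eqref{eq_coboundary2}) is not merely sufficient but also necessary for \eqref{eq_bialg1} (respectively \eqref{eq_bialg2}).
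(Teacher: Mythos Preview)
Your proposal is correct and follows essentially the same route as the paper: direct substitution of Eq.~\eqref{eq_coboundary} and Lemma~\ref{lem:sigma} into both sides of Eqs.~\eqref{eq_bialg1} and \eqref{eq_bialg2}, followed by cancellation via the anti-flexible identity (in part (b) this appears as the vanishing of $[L_\cdot(y),R_\cdot(x)]-[L_\cdot(x),R_\cdot(y)]$), leaving precisely the stated operators applied to $\mathrm{r}+\sigma\mathrm{r}$. Your observation that the residual condition lives entirely on the symmetric part $\mathrm{r}+\sigma\mathrm{r}$ is exactly the point, and the equivalence is indeed just the reversibility of these algebraic manipulations.
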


\begin{proof}
(a) Let $x, y\in A$. By Lemma~\ref{lem:sigma}, we have
$$
\Delta(x\cdot y)+\sigma\Delta(y\cdot x)=(\id\otimes (L_{\cdot}(x\cdot y)+
R_{\cdot}(y\cdot x)))\mathrm{r} +((R_{\cdot}(x\cdot y)+L_{\cdot}(y\cdot x))\otimes\id)\sigma \mathrm{r}.
$$
By the definition of an anti-flexible algebra, we have
\begin{equation*}\label{eq_qqq1}
\Delta(x\cdot y)+\sigma\Delta(y\cdot x)
= (\id\otimes (L_{\cdot}(x)L_{\cdot}(y)+R_{\cdot}(x)R_{\cdot}(y)))\mathrm{r}
 +((R_{\cdot}(y)R_{\cdot}(x)+L_{\cdot}(y)L_{\cdot}(x))\otimes\id)\sigma \mathrm{r}.
\end{equation*}
Moreover, we have
\begin{eqnarray*}
\sigma (\id\otimes L_{\cdot}(y))\Delta(x)&=&\sigma(\id\otimes L_{\cdot}(y))(\id\otimes L_{\cdot}(x))\mathrm{r}
+\sigma(\id\otimes L_{\cdot}(y))(R_{\cdot}(x)\otimes \id )\sigma \mathrm{r}\\
&=&(L_{\cdot}(y)L_{\cdot}(x)\otimes\id)\sigma \mathrm{r} +(L_{\cdot}(y)\otimes\id)(\id\otimes R_{\cdot}(x))\mathrm{r},\\
(R_{\cdot}(y)\otimes\id)\Delta(x)&=&   (R_{\cdot}(y)\otimes\id)(\id\otimes L_{\cdot}(x))\mathrm{r}
+(R_{\cdot}(y)\otimes\id)(R_{\cdot}(x)\otimes \id)\sigma \mathrm{r}\\
&=&(R_{\cdot}(y)\otimes\id)(\id\otimes L_{\cdot}(x))\mathrm{r}+(R_{\cdot}(y)R_{\cdot}(x)\otimes\id)\sigma \mathrm{r},\\
\sigma(R_{\cdot}(x)\otimes\id)\Delta(y)&=& \sigma(R_{\cdot}(x)\otimes\id)(\id\otimes L_{\cdot} (y))\mathrm{r}
 +\sigma(R_{\cdot}(x)\otimes\id)(R_{\cdot}(y)\otimes \id)\sigma \mathrm{r}\\
&=&(\id\otimes R_{\cdot}(x))(L_{\cdot}(y)\otimes\id)\sigma \mathrm{r}+(\id\otimes R_{\cdot}(x)R_{\cdot}(y))\mathrm{r},\\
(\id\otimes L_{\cdot} (x))\Delta(y)&=& (\id\otimes L_{\cdot} (x))(\id\otimes L_{\cdot} (y))\mathrm{r}
+(\id\otimes L_{\cdot} (x))(R_{\cdot}(y)\otimes \id)\sigma \mathrm{r}\\
&=&(\id \otimes L_{\cdot}(x)L_{\cdot}(y))\mathrm{r}+(\id\otimes L_{\cdot}(x))(R_{\cdot}(y)\otimes\id)\sigma \mathrm{r}.
\end{eqnarray*}
Hence we have
\begin{eqnarray*}\label{eq_qq1}
(R_{\cdot}(y)\otimes\id+\sigma (\id\otimes L_{\cdot}(y))
)\Delta(x)+(\id\otimes L_{\cdot}
(x)+\sigma(R_{\cdot}(x)\otimes\id))\Delta(y)\cr -\Delta(x\cdot
y)-\sigma\Delta(y\cdot x)=( R_{\cdot}(y)\otimes L_{\cdot}(x)
+L_{\cdot}(y)\otimes R_{\cdot}(x) )(\mathrm{r}+\sigma \mathrm{r}).
\end{eqnarray*}
Therefore Eq.~\eqref{eq_bialg1} hold if and only if Eq.
\eqref{eq_coboundary1} holds.

(b) Let $x,y\in A$. Then we have
\begin{eqnarray*}
\sigma(\id\otimes R_{\cdot}(y))\Delta(x)&=& \sigma(\id\otimes R_{\cdot}(y))(\id \otimes L_{\cdot}(x))\mathrm{r}
+\sigma(\id\otimes R_{\cdot}(y))(R_{\cdot}(x)\otimes \id)\sigma \mathrm{r}\\
&=&(R_{\cdot}(y)L_{\cdot}(x)\otimes\id)\sigma \mathrm{r}+(R_{\cdot}(y)\otimes\id)(\id\otimes R_{\cdot}(x))\mathrm{r},\\
(\id\otimes R_{\cdot}(y))\Delta(x) &=&(\id\otimes R_{\cdot}(y))(\id \otimes L_{\cdot}(x))\mathrm{r} +
(\id\otimes R_{\cdot}(y))(R_{\cdot}(x)\otimes \id)\sigma \mathrm{r} \\
&=&(\id\otimes R_{\cdot}(y)L_{\cdot}(x))\mathrm{r}+(\id\otimes R_{\cdot}(y))(R_{\cdot}(x)\otimes \id)\sigma \mathrm{r},\\
\sigma(L_{\cdot}(y)\otimes \id )\Delta(x)&=&  \sigma(L_{\cdot}(y)\otimes \id )(\id \otimes L_{\cdot}(x))\mathrm{r}+
\sigma(L_{\cdot}(y)\otimes \id )(R_{\cdot}(x)\otimes \id)\sigma \mathrm{r}\\
&=&(\id\otimes L_{\cdot}(y))(L_{\cdot}(x)\otimes\id)\sigma \mathrm{r}+(\id\otimes L_{\cdot}(y)R_{\cdot}(x))\mathrm{r},\\
(L_{\cdot}(y)\otimes \id )\Delta(x)&=& (L_{\cdot}(y)\otimes \id )(\id \otimes L_{\cdot}(x))\mathrm{r}+
 (L_{\cdot}(y)\otimes \id )(R_{\cdot}(x)\otimes \id)\sigma \mathrm{r}\\
&=&(L_{\cdot}(y)\otimes\id)(\id\otimes L_{\cdot}(x))\mathrm{r}+  (L_{\cdot}(y)R_{\cdot}(x)\otimes\id )\sigma \mathrm{r}.
\end{eqnarray*}
Therefore we have
\begin{eqnarray*}\label{eq_qq2}
\begin{array}{lll}
&&(\sigma(\id\otimes R_{\cdot}(y))+(L_{\cdot}(y)\otimes \id)
-(\id\otimes R_{\cdot}(y))-\sigma(L_{\cdot}(y)\otimes \id )
)\Delta(x)\cr &&-(\sigma(\id\otimes
R_{\cdot}(x))+(L_{\cdot}(x)\otimes \id) -(\id\otimes
R_{\cdot}(x))-\sigma(L_{\cdot}(x)\otimes \id ) )\Delta(y)\cr
&&=(R_{\cdot}(y)\otimes R_{\cdot}(x)+L_{\cdot}(y)\otimes
L_{\cdot}(x)-R_{\cdot}(x)\otimes R_{\cdot}(y)-L_{\cdot}(x)\otimes
L_{\cdot}(y))(\mathrm{r}+\sigma \mathrm{r})\cr &&+(([L_{\cdot}(y),
R_{\cdot}(x)]-[L_{\cdot}(x), R_{\cdot}(y)])\otimes \id)\sigma \mathrm{r}+
(\id\otimes ([L_{\cdot}(x), R_{\cdot}(y)]-[L_{\cdot}(y),
R_{\cdot}(x)]   ))\mathrm{r}\cr &&=(R_{\cdot}(y)\otimes
R_{\cdot}(x)+L_{\cdot}(y)\otimes L_{\cdot}(x)-R_{\cdot}(x)\otimes
R_{\cdot}(y)-L_{\cdot}(x)\otimes L_{\cdot}(y))(\mathrm{r}+\sigma \mathrm{r}).
\end{array}
\end{eqnarray*}
Note that the last equal sign is due to the definition of an
anti-flexible algebra. Hence
 Eq.~\eqref{eq_bialg2} hold if and only if Eq.~\eqref{eq_coboundary2} holds.
\end{proof}

\begin{lem}\label{Lem_coprod}
Let $A$ be a vector space and $\Delta: A  \rightarrow A\otimes A$
be a linear map. Then the dual map $\Delta^* : A^*\otimes A^*
\rightarrow A^*$ defines an anti-flexible  algebra structure on
$A^*$ if and only if $E_{\Delta} = 0$, where
\begin{equation}
E_{\Delta}=
(\Delta\otimes \id)\Delta-(\id\otimes\Delta)\Delta
+((\sigma\Delta)\otimes\id) (\sigma\Delta)-(\id\otimes(\sigma\Delta)) (\sigma\Delta).
\end{equation}
\end{lem}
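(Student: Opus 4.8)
The plan is to dualize the defining identity of an anti-flexible algebra on $(A^*,\circ)$, where the product is $a\circ b := \Delta^*(a\otimes b)$, and read off the resulting constraint on $\Delta$. Writing the associator of $\circ$ as $(a,b,c)_\circ := (a\circ b)\circ c - a\circ(b\circ c)$, by Eq.~\eqref{eq:afa} the structure $(A^*,\circ)$ is anti-flexible precisely when $(a,b,c)_\circ = (c,b,a)_\circ$ for all $a,b,c\in A^*$. Since the pairing between $A^{*\otimes 3}$ and $A^{\otimes 3}$ is nondegenerate, it suffices to test this identity against all $x\in A$, i.e.\ to compare $\langle (a,b,c)_\circ, x\rangle$ with $\langle (c,b,a)_\circ, x\rangle$.

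First I would translate the left-hand associator. Using $\langle a\circ b, x\rangle = \langle a\otimes b, \Delta(x)\rangle$ twice, a routine computation (conveniently done in Sweedler-type notation $\Delta(x)=\sum x_{(1)}\otimes x_{(2)}$) gives
\[
\langle (a\circ b)\circ c, x\rangle = \langle a\otimes b\otimes c, (\Delta\otimes\id)\Delta(x)\rangle,\qquad \langle a\circ(b\circ c), x\rangle = \langle a\otimes b\otimes c, (\id\otimes\Delta)\Delta(x)\rangle,
\]
so that $\langle (a,b,c)_\circ, x\rangle = \langle a\otimes b\otimes c,\, [(\Delta\otimes\id)\Delta - (\id\otimes\Delta)\Delta](x)\rangle$.

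The key step is the right-hand associator $(c,b,a)_\circ$, where the outer arguments are interchanged. Let $\tau:A^{\otimes 3}\to A^{\otimes 3}$ denote the order-reversing flip $\tau(u\otimes v\otimes w)=w\otimes v\otimes u$, so that $\langle c\otimes b\otimes a, w\rangle = \langle a\otimes b\otimes c, \tau(w)\rangle$. The same computation then yields $\langle (c,b,a)_\circ, x\rangle = \langle a\otimes b\otimes c,\, \tau[(\Delta\otimes\id)\Delta - (\id\otimes\Delta)\Delta](x)\rangle$. The crux of the argument is the pair of bookkeeping identities
\[
\tau(\Delta\otimes\id)\Delta = (\id\otimes(\sigma\Delta))(\sigma\Delta),\qquad \tau(\id\otimes\Delta)\Delta = ((\sigma\Delta)\otimes\id)(\sigma\Delta),
\]
which I would verify by tracking tensor factors: both sides of the first equal $\sum x_{(2)}\otimes x_{(1)(2)}\otimes x_{(1)(1)}$, and both sides of the second equal $\sum x_{(2)(2)}\otimes x_{(2)(1)}\otimes x_{(1)}$. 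Substituting these gives $\langle (c,b,a)_\circ, x\rangle = \langle a\otimes b\otimes c,\, [(\id\otimes(\sigma\Delta))(\sigma\Delta) - ((\sigma\Delta)\otimes\id)(\sigma\Delta)](x)\rangle$.

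Finally, equating the two paired expressions for all $a,b,c\in A^*$ and $x\in A$ and invoking nondegeneracy of the pairing, the anti-flexibility of $(A^*,\circ)$ is equivalent to
\[
(\Delta\otimes\id)\Delta - (\id\otimes\Delta)\Delta = (\id\otimes(\sigma\Delta))(\sigma\Delta) - ((\sigma\Delta)\otimes\id)(\sigma\Delta),
\]
which, upon moving all terms to one side, is precisely $E_\Delta=0$. The only genuine obstacle is getting the two flip identities right; once the permutation bookkeeping is pinned down, the equivalence is immediate, and I would not belabor the routine Sweedler manipulations in the write-up.
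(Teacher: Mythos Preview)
Your proposal is correct and follows essentially the same approach as the paper: both dualize the anti-flexible identity $(a,b,c)_\circ=(c,b,a)_\circ$ against $x\in A$, identify the $(a,b,c)$ side with $(\Delta\otimes\id)\Delta-(\id\otimes\Delta)\Delta$, and then rewrite the $(c,b,a)$ side in terms of $\sigma\Delta$. The only cosmetic difference is that you make the order-reversal explicit via the permutation $\tau$ on $A^{\otimes 3}$ and verify the two flip identities in Sweedler notation, whereas the paper inserts $\sigma^*$ on the dual side to pass from $c\otimes b\otimes a$ to $a\otimes b\otimes c$; the underlying bookkeeping is identical.
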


\begin{proof} Denote by ``$\circ$" the product on $A^*$ defined by $\Delta^*$, that is,
$$\langle a\circ b, x\rangle =\langle \Delta^*(a\otimes b), x\rangle=\langle a\otimes b, \Delta(x)\rangle \;\;\forall x\in A, a,b\in A^*.$$
Therefore, for all $a, b, c\in A^*$ and $x\in A$, we have
\begin{eqnarray*}
\langle (a,b,c), x\rangle &=&\langle (a\circ b)\circ c-a\circ (b\circ c), x\rangle=
\langle \left(\Delta^*(\Delta^*\otimes\id)-\Delta^*(\id\otimes \Delta^*)  \right)(a\otimes b\otimes c),x\rangle\\
&=&\langle \left((\Delta\otimes\id) \Delta-(\id\otimes \Delta)\Delta\right)(x), a\otimes b\otimes c\rangle;\\
\langle (c,b,a),x\rangle &=&\langle (c\circ b)\circ a-c\circ (b\circ a),x\rangle =
\langle \left(\Delta^*(\Delta^*\otimes\id)-\Delta^*(\id\otimes \Delta^*)  \right)(c\otimes b\otimes a),x\rangle \cr
&=&\langle \left(  (\Delta^* \sigma^*) ((\Delta^*\sigma^*)\otimes\id)-  
(\Delta^* \sigma^*)(\id \otimes(\Delta^* \sigma^*))\right)(a\otimes b\otimes c),x\rangle\cr
&=&\langle \left(((\sigma\Delta)\otimes\id)(\sigma\Delta)-
(\id\otimes (\sigma\Delta))(\sigma\Delta)  \right)(x),a\otimes b\otimes c\rangle.
\end{eqnarray*}
Therefore, $(A^*,\circ)$ is an anti-flexible algebra if and only if $E_{\Delta}=0$.
\end{proof}

Let $(A,\cdot)$ be an anti-flexible algebra and 
$\displaystyle \mathrm{r}=\sum_i{a_i\otimes b_i}\in A\otimes A$. Set
\begin{equation}
\mathrm{r}_{12}=\sum_ia_i\otimes b_i\otimes 1,\quad
\mathrm{r}_{13}=\sum_{i}a_i\otimes 1\otimes b_i,\quad r_{23}=\sum_i1\otimes
a_i\otimes b_i,
\end{equation}
\begin{equation}
\mathrm{r}_{21}=\sum_ib_i\otimes a_i\otimes 1,\quad
\mathrm{r}_{31}=\sum_{i}b_i\otimes 1\otimes a_i,\quad r_{32}=\sum_i1\otimes
b_i\otimes a_i,
\end{equation}
where $1$ is the unit if $(A,\cdot)$ has a unit, otherwise is a
symbol playing a similar role of the unit. Then the operation
between two $\mathrm{r}$s is in an obvious way. For example,
\begin{equation}
\mathrm{r}_{12}\mathrm{r}_{13}=\sum_{i,j}a_i\cdot a_j\otimes b_i\otimes b_j,\;
\mathrm{r}_{13}\mathrm{r}_{23}=\sum_{i,j}a_i\otimes a_j\otimes b_i\cdot b_j,\;
\mathrm{r}_{23}\mathrm{r}_{12}=\sum_{i,j}a_j\otimes a_i\cdot b_j\otimes b_i,
\end{equation}
and so on.

\begin{thm}\label{thm:dual}
Let $(A, \cdot )$ be an anti-flexible algebra and $\mathrm{r}\in A\otimes A$. Let $\Delta:A\rightarrow A\otimes A$ 
be a linear map defined by Eq.~\eqref{eq_coboundary}.
Then $\Delta^*$ defines an anti-flexible algebra structure on $A^*$ if and only if
for any $x\in A$,
\begin{eqnarray}\label{eq_coboundarycoal}
&&(\id\otimes\id\otimes L_{\cdot}(x))(M(\mathrm{r}))+(\id\otimes\id\otimes R_{\cdot}(x))(P(\mathrm{r}))\nonumber\\
&&+( L_{\cdot}(x)\otimes\id\otimes \id)(N(\mathrm{r}))+( R_{\cdot}(x)\otimes\id\otimes\id)(Q(\mathrm{r}))=0,
\end{eqnarray}
where
\begin{eqnarray*}
&&M(\mathrm{r})=\mathrm{r}_{{23}}\mathrm{r}_{{12}}+\mathrm{r}_{{21}}r_{{13}}-
\mathrm{r}_{{13}}\mathrm{r}_{{23}},\;\;N(\mathrm{r})=\mathrm{r}_{{31}}\mathrm{r}_{{21}}-
\mathrm{r}_{{21}}\mathrm{r}_{{32}}-\mathrm{r}_{{23}}\mathrm{r}_{{31}},\\
&&P(\mathrm{r})=\mathrm{r}_{{13}}\mathrm{r}_{{21}}+\mathrm{r}_{{12}}r_{{23}}-
\mathrm{r}_{{23}}\mathrm{r}_{{13}},\;\; Q(\mathrm{r})=\mathrm{r}_{{21}}\mathrm{r}_{{31}}-
\mathrm{r}_{{31}}\mathrm{r}_{{23}}-\mathrm{r}_{{32}}\mathrm{r}_{{21}}.
\end{eqnarray*}
\end{thm}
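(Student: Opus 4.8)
The plan is to invoke Lemma~\ref{Lem_coprod}, which already reduces the assertion to a single identity: $\Delta^*$ defines an anti-flexible algebra structure on $A^*$ if and only if $E_\Delta=0$. So it suffices to show that, when $\Delta$ is the coboundary map of Eq.~\eqref{eq_coboundary}, the element $E_\Delta(x)\in A\otimes A\otimes A$ equals the left-hand side of Eq.~\eqref{eq_coboundarycoal} for every $x$; then $E_\Delta=0$ is literally equivalent to Eq.~\eqref{eq_coboundarycoal}. I would first record the two building blocks in components. Writing $\mathrm{r}=\sum_i a_i\otimes b_i$, Eq.~\eqref{eq_coboundary} reads $\Delta(y)=\sum_i a_i\otimes(y\cdot b_i)+\sum_i(b_i\cdot y)\otimes a_i$, and Lemma~\ref{lem:sigma} gives $\sigma\Delta(y)=\sum_i(y\cdot b_i)\otimes a_i+\sum_i a_i\otimes(b_i\cdot y)$. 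Then I expand each of the four summands of $E_\Delta$ by substituting these formulas, noting that whenever $\Delta$ or $\sigma\Delta$ is applied in the tensor slot carrying $x$ (for instance, the factor $\Delta(x\cdot b_i)$ appearing inside $(\id\otimes\Delta)\Delta(x)$), the variable $x$ becomes buried inside a double product such as $(x\cdot b_i)\cdot b_j$ or $b_j\cdot(b_i\cdot x)$.

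The resulting terms split into two families. The \emph{outer} terms, in which $x$ already appears as $L_{\cdot}(x)$ or $R_{\cdot}(x)$ acting on the first or third tensor factor, assemble — after relabelling the summation indices $i\leftrightarrow j$ — directly into two of the three summands of each of $(\id\otimes\id\otimes L_{\cdot}(x))M(\mathrm{r})$, $(\id\otimes\id\otimes R_{\cdot}(x))P(\mathrm{r})$, $(L_{\cdot}(x)\otimes\id\otimes\id)N(\mathrm{r})$, and $(R_{\cdot}(x)\otimes\id\otimes\id)Q(\mathrm{r})$. Indeed, the two single-letter slots of each $M,N,P,Q$ produced by the slot-wise products $\mathrm{r}_{ij}\mathrm{r}_{kl}$ correspond exactly to these outer contributions.

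The \emph{inner} terms, in which $x$ is buried, are the crux, and here the anti-flexible identity Eq.~\eqref{eq:afa} enters. The key rearrangement is to rewrite $(x,v,w)=(w,v,x)$ as
\[
(x\cdot v)\cdot w + w\cdot(v\cdot x) = x\cdot(v\cdot w) + (w\cdot v)\cdot x .
\]
Applied to the inner terms whose buried factor sits in the first slot (these come from $(\id\otimes\Delta)\Delta$ and $(\id\otimes\sigma\Delta)(\sigma\Delta)$) and to those whose buried factor sits in the third slot (from $(\Delta\otimes\id)\Delta$ and $(\sigma\Delta\otimes\id)(\sigma\Delta)$), this converts each paired combination into a clean $L_{\cdot}(x)$-term plus an $R_{\cdot}(x)$-term carrying the double product $b_i\cdot b_j$; these supply precisely the remaining (double-letter) summand of $N,Q$ and of $M,P$, respectively.

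Finally, the inner terms whose buried factor sits in the \emph{middle} slot must vanish, since no $L_{\cdot}(x)$ or $R_{\cdot}(x)$ acts on the second factor in Eq.~\eqref{eq_coboundarycoal}. After the same index relabelling, they group in pairs into middle-slot associators $(b_i,x,b_j)$ and $(b_j,x,b_i)$, and these cancel exactly because of the symmetry $(b_i,x,b_j)=(b_j,x,b_i)$, which is again Eq.~\eqref{eq:afa}. Collecting the four surviving families gives $E_\Delta(x)$ equal to the left-hand side of Eq.~\eqref{eq_coboundarycoal}, and the theorem follows from Lemma~\ref{Lem_coprod}. The main obstacle is purely organizational: tracking the roughly sixteen terms through the index relabellings so that each use of the anti-flexible identity lands in the intended slot and the middle-slot terms cancel; beyond this bookkeeping there is no conceptual difficulty.
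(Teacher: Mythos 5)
Your proposal is correct and takes essentially the same route as the paper's proof: reduce to $E_\Delta=0$ via Lemma~\ref{Lem_coprod}, expand the four compositions in components, let the outer terms assemble into two of the three summands of each of $M(\mathrm{r}),N(\mathrm{r}),P(\mathrm{r}),Q(\mathrm{r})$, convert the first-/third-slot buried terms by the rearrangement $(x\cdot v)\cdot w + w\cdot(v\cdot x)=x\cdot(v\cdot w)+(w\cdot v)\cdot x$ of the anti-flexible identity to supply the remaining double-product summands, and cancel the middle-slot terms after relabelling by $(b_i,x,b_j)=(b_j,x,b_i)$ (your justification here is in fact more explicit than the paper's terse ``exchange $i$ and $j$''). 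The only slip is a harmless misattribution: the first-slot buried terms come from $(\Delta\otimes\id)\Delta$ and $((\sigma\Delta)\otimes\id)(\sigma\Delta)$, while the third-slot ones come from $(\id\otimes\Delta)\Delta$ and $(\id\otimes(\sigma\Delta))(\sigma\Delta)$ --- the reverse of what you wrote --- but this does not affect the structure or validity of the argument.
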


\begin{proof}
Set
$\displaystyle \mathrm{r}=\sum_i a_i\otimes b_i$. Let $x\in A$. Then we
have
\begin{eqnarray*}
(\Delta\otimes\id) \Delta(x)&=&
\sum_{i, j} \{  a_j\otimes (a_i\cdot b_j)\otimes (x\cdot b_{i}) +
(b_j\cdot a_i)\otimes a_j\otimes (x\cdot b_i)\cr &+& a_j\otimes ((b_i\cdot x)b_j)\otimes a_i  + 
(b_j\cdot(b_i\cdot x))\otimes a_j\otimes a_i    \}\\
&=& \sum_{i, j} \{ a_j\otimes ((b_i\cdot x)\cdot b_j)\otimes a_i  + (b_j\cdot (b_i\cdot x))\otimes a_j\otimes a_i   \}
\cr&+&(\id\otimes\id\otimes L_{\cdot}(x))(\mathrm{r}_{{23}}\mathrm{r}_{{12}}+\mathrm{r}_{{21}}\mathrm{r}_{{13}}),\\
 ((\sigma\Delta)\otimes\id)(\sigma\Delta)(x)
&=&\sum_{i, j} \{ ((x\cdot b_i)\cdot b_j)\otimes a_j\otimes a_i+a_j\otimes (b_j\cdot (x\cdot b_i))\otimes a_i\cr
    &+&(a_i\cdot b_j)\otimes a_j\otimes (b_i\cdot x) +a_j\otimes(b_j\cdot a_i)\otimes(b_i\cdot x)        \}  \cr
    &=&\sum_{i, j} \{ ((x\cdot b_i)\cdot b_j)\otimes a_j\otimes a_i+a_j\otimes (b_j\cdot (x\cdot b_i))\otimes a_i\}
    \cr
    &+&(\id\otimes\id\otimes R_{\cdot}(x))(\mathrm{r}_{{13}}\mathrm{r}_{{21}}+\mathrm{r}_{{12}}\mathrm{r}_{{23}}),\\
(\id\otimes\Delta)\Delta(x)&=&\sum_{i, j} \{ a_i\otimes a_j\otimes((x\cdot b_i)\cdot b_j) +a_i\otimes(b_j\cdot (x\cdot b_i))\otimes a_j\cr
    &+&(b_i\cdot x)\otimes a_j\otimes (a_i\cdot b_j)
    +(b_i\cdot x)\otimes (b_j\cdot a_i)\otimes a_j  \}  \cr
    &=&\sum_{i, j} \{ a_i\otimes a_j\otimes((x\cdot b_i)\cdot b_j) +a_i\otimes(b_j\cdot (x\cdot b_i))\otimes a_j\}
    \cr &+& (R_{\cdot}(x)\otimes\id \otimes\id)(\mathrm{r}_{{31}}\mathrm{r}_{{23}}+\mathrm{r}_{{32}}\mathrm{r}_{{21}}),\\
(\id\otimes(\sigma\Delta))(\sigma\Delta)(x)&=&\sum_{i, j}   
\{ (x\cdot b_i)\otimes(a_i\cdot b_j)\otimes a_j+(x\cdot b_i)\otimes a_j \otimes (b_j\cdot a_i)\cr
    &+&
    a_i\otimes ((b_i\cdot x)\cdot b_j)\otimes a_j + a_i\otimes a_j\otimes (b_j\cdot (b_i\cdot x)) \}
    \cr
    &=& \sum_{i, j}   \{ a_i\otimes ((b_i\cdot x)\cdot b_j)\otimes a_j + a_i\otimes a_j\otimes (b_j\cdot (b_i\cdot x))\}
    \cr &+& (L_{\cdot}(x)\otimes\id \otimes\id)(\mathrm{r}_{{21}}\mathrm{r}_{{32}}+\mathrm{r}_{{23}}\mathrm{r}_{{31}}).
\end{eqnarray*}
Thus $E_{\Delta}(x)=(A1)+(A2)+(A3)$, where
\begin{eqnarray*}
(A1)&=&
(\id\otimes\id\otimes L_{\cdot}(x))(\mathrm{r}_{{23}}\mathrm{r}_{{12}}+\mathrm{r}_{{21}}\mathrm{r}_{{13}})+
(\id\otimes\id\otimes R_{\cdot}(x))(\mathrm{r}_{{13}}\mathrm{r}_{{21}}+\mathrm{r}_{{12}}\mathrm{r}_{{23}})\cr
&-&(R_{\cdot}(x)\otimes\id \otimes\id)(\mathrm{r}_{{31}}\mathrm{r}_{{23}}+\mathrm{r}_{{32}}\mathrm{r}_{{21}})
-(L_{\cdot}(x)\otimes\id \otimes\id)(\mathrm{r}_{{21}}\mathrm{r}_{{32}}+\mathrm{r}_{{23}}\mathrm{r}_{{31}}), \cr
(A2) &=&\sum_{i, j} \{ a_j\otimes ((b_i\cdot x)\cdot b_j+b_j\cdot (x\cdot b_i))\otimes a_i- 
a_i\otimes(b_j\cdot (x\cdot b_i)+(b_i\cdot x)\cdot b_j)\otimes a_j   \},\cr
(A3)&=&\sum_{i, j} \{ ((x\cdot b_i)\cdot b_j+b_j\cdot (b_i\cdot x))\otimes a_j\otimes a_i -
a_i\otimes a_j\otimes((x\cdot b_i)\cdot b_j+b_j\cdot (b_i\cdot x)) \}.
\end{eqnarray*}
By exchanging the indices $i$ and $j$, we have $(A2)=0$.

\noindent By the definition of an anti-flexible algebra,  we have
\begin{eqnarray*}
(A3)&=&\sum_{i, j} \{ (x\cdot (b_i\cdot b_j)+(b_j\cdot b_i)\cdot x)\otimes a_j\otimes a_i -
a_i\otimes a_j\otimes(x\cdot (b_i\cdot b_j)+(b_j\cdot b_i)\cdot x) \}\\
&=&(L_{\cdot}(x)\otimes\id\otimes \id)(\mathrm{r}_{31}\mathrm{r}_{21})+
(R_{\cdot}(x)\otimes\id \otimes\id)(\mathrm{r}_{21}\mathrm{r}_{31})
\cr&-&
(\id\otimes\id\otimes R_{\cdot}(x))(\mathrm{r}_{23}\mathrm{r}_{13})-
(\id\otimes\id \otimes L_{\cdot}(x))(\mathrm{r}_{13}\mathrm{r}_{23}).
\end{eqnarray*}
Then we  have 
\begin{eqnarray*}
E_{\Delta}(x)&=&(\id\otimes\id\otimes
L_{\cdot}(x))(\mathrm{r}_{{23}}\mathrm{r}_{{12}}+
\mathrm{r}_{{21}}\mathrm{r}_{{13}}-\mathrm{r}_{{13}}\mathrm{r}_{{23}})+
(\id\otimes\id\otimes
R_{\cdot}(x))(\mathrm{r}_{{13}}\mathrm{r}_{{21}}+
\mathrm{r}_{{12}}\mathrm{r}_{{23}}-\mathrm{r}_{{23}}\mathrm{r}_{{13}})\\
&-&(R_{\cdot}(x)\otimes\id
\otimes\id)(\mathrm{r}_{{31}}\mathrm{r}_{{23}}+\mathrm{r}_{{32}}\mathrm{r}_{{21}}
-\mathrm{r}_{{21}}\mathrm{r}_{{31}})-(L_{\cdot}(x)\otimes\id
\otimes\id)(\mathrm{r}_{{21}}\mathrm{r}_{{32}}+\mathrm{r}_{{23}}\mathrm{r}_{{31}}-
\mathrm{r}_{{31}}\mathrm{r}_{{21}}).
\end{eqnarray*}
Hence the conclusion follows.
\end{proof}

\begin{rmk} \label{rmk:M}
In fact, for any $\mathrm{r}\in A\otimes A$, we have
$$N(\mathrm{r})=-\sigma_{13}M(\mathrm{r}),\;\;\; P(\mathrm{r})=\sigma_{12}M(\mathrm{r}), \;\;\;
Q(\mathrm{r})=-\sigma_{12}\sigma_{13}M(\mathrm{r}),$$ where $\sigma_{12}(x\otimes
y\otimes z)=y\otimes x\otimes z$, $\sigma_{13}(x\otimes y\otimes
z)=z\otimes y\otimes x$, for any $x,y,z\in A$.
\end{rmk}
Combing Proposition~\ref{pro:sum}, Theorem~\ref{thm:dual} and
Remark~\ref{rmk:M} together, we have the following conclusion.

\begin{thm}\label{thm:main}
Let $(A, \cdot )$ be an anti-flexible algebra and $\mathrm{r}\in A\otimes A$. Let $\Delta:A\rightarrow A\otimes A$
be a linear map defined by Eq.~\eqref{eq_coboundary}.
Then $(A, \Delta)$ is an anti-flexible bialgebra if and only if $r$ satisfies Eqs.~\eqref{eq_coboundary1},
\eqref{eq_coboundary2} and
\begin{equation}\label{eq_coboundarycoal1}
\begin{array}{lll}
\left((\id\otimes\id\otimes L_{\cdot}(x))- (
R_{\cdot}(x)\otimes\id\otimes\id)\sigma_{12}\sigma_{13}
\right.\cr +\left.((\id\otimes\id\otimes R_{\cdot}(x))\sigma_{12}-
( L_{\cdot}(x)\otimes\id\otimes \id ) \sigma_{13}\right   )M(\mathrm{r})=0,
\end{array}
\end{equation}
where $M(\mathrm{r})=\mathrm{r}_{{23}}\mathrm{r}_{{12}}+
\mathrm{r}_{{21}}\mathrm{r}_{{13}}-\mathrm{r}_{{13}}\mathrm{r}_{{23}}$.
\end{thm}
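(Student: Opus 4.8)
The plan is to assemble the statement directly from the three preparatory results that immediately precede it, since each of the two defining conditions of an anti-flexible bialgebra has already been translated into a condition on $\mathrm{r}$ under the coboundary ansatz \eqref{eq_coboundary}.

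First I would unwind the definition: $(A,\Delta)$ is an anti-flexible bialgebra precisely when (i) $\Delta$ satisfies both Eqs.~\eqref{eq_bialg1} and \eqref{eq_bialg2}, and (ii) the dual map $\Delta^*$ endows $A^*$ with an anti-flexible algebra structure. Because $\Delta$ is of the coboundary form \eqref{eq_coboundary}, Proposition~\ref{pro:sum} applies verbatim and shows that condition (i) is equivalent to the conjunction of Eqs.~\eqref{eq_coboundary1} and \eqref{eq_coboundary2}; this disposes of the two compatibility identities with no further computation.

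Next I would treat condition (ii) by applying Theorem~\ref{thm:dual} to the same $\Delta$: the requirement that $\Delta^*$ be an anti-flexible product on $A^*$ is equivalent to Eq.~\eqref{eq_coboundarycoal}, written in terms of the four tensors $M(\mathrm{r})$, $N(\mathrm{r})$, $P(\mathrm{r})$ and $Q(\mathrm{r})$. The last step is purely formal: using the identities $N(\mathrm{r})=-\sigma_{13}M(\mathrm{r})$, $P(\mathrm{r})=\sigma_{12}M(\mathrm{r})$ and $Q(\mathrm{r})=-\sigma_{12}\sigma_{13}M(\mathrm{r})$ from Remark~\ref{rmk:M}, I would substitute for $N,P,Q$ in \eqref{eq_coboundarycoal} and factor $M(\mathrm{r})$ out on the right. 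Collecting the four operator coefficients reproduces exactly the left-hand side of \eqref{eq_coboundarycoal1}, so (i) and (ii) together are equivalent to Eqs.~\eqref{eq_coboundary1}, \eqref{eq_coboundary2} and \eqref{eq_coboundarycoal1}, which is the assertion.

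There is essentially no remaining obstacle, as all the genuine work — the expansion of $E_{\Delta}$ in Theorem~\ref{thm:dual} and the verification of the coboundary identities in Proposition~\ref{pro:sum} — has already been carried out. The only point demanding care is the bookkeeping of the permutation operators $\sigma_{12}$ and $\sigma_{13}$ when rewriting \eqref{eq_coboundarycoal} through Remark~\ref{rmk:M}: one must check that each $\sigma$ acts on the intended tensor legs, so that the four coefficients align precisely with those displayed in \eqref{eq_coboundarycoal1}.
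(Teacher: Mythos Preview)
Your proposal is correct and matches the paper's approach exactly: the paper states Theorem~\ref{thm:main} as an immediate consequence of combining Proposition~\ref{pro:sum}, Theorem~\ref{thm:dual} and Remark~\ref{rmk:M}, with no further argument given. Your write-up is in fact more detailed than the paper's one-line justification, and the only caution you flag (tracking the $\sigma_{12},\sigma_{13}$ bookkeeping) is the right one.
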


As a direct consequence of Theorem~\ref{thm:main}, we have the
following result.

\begin{cor}
Let $(A, \cdot )$ be an anti-flexible algebra and $\mathrm{r}\in A\otimes A$. Let $\Delta:A\rightarrow A\otimes A$
be a linear map defined by Eq.~\eqref{eq_coboundary}.
If in addition, $\mathrm{r}$ is  skew-symmetric and $\mathrm{r}$ satisfies
\begin{equation}\label{eq_YBE}
\mathrm{r}_{{12}}\mathrm{r}_{{13}}-\mathrm{r}_{{23}}\mathrm{r}_{{12}}+
\mathrm{r}_{{13}}\mathrm{r}_{{23}}=0,
\end{equation}
then $(A, \Delta)$ is an anti-flexible bialgebra.
\end{cor}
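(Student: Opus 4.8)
The plan is to invoke Theorem~\ref{thm:main}, which reduces the claim to verifying the three conditions \eqref{eq_coboundary1}, \eqref{eq_coboundary2} and \eqref{eq_coboundarycoal1} for the given $\mathrm{r}$. Since $\mathrm{r}$ is assumed skew-symmetric, the first thing I would record is that $\sigma\mathrm{r}=-\mathrm{r}$, so $\mathrm{r}+\sigma\mathrm{r}=0$. Both \eqref{eq_coboundary1} and \eqref{eq_coboundary2} have the factor $(\mathrm{r}+\sigma\mathrm{r})$ standing to the right of a tensor of multiplication operators, so they hold automatically; this disposes of the two conditions coming from Proposition~\ref{pro:sum}, i.e. the equivalents of \eqref{eq_bialg1} and \eqref{eq_bialg2}.

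The remaining condition \eqref{eq_coboundarycoal1} is an operator applied to $M(\mathrm{r})=\mathrm{r}_{23}\mathrm{r}_{12}+\mathrm{r}_{21}\mathrm{r}_{13}-\mathrm{r}_{13}\mathrm{r}_{23}$, so it suffices to show $M(\mathrm{r})=0$. Here I would use skew-symmetry in the sharper form $\mathrm{r}_{21}=-\mathrm{r}_{12}$ (as elements of $A\otimes A\otimes A$ with the unit symbol occupying the third slot). Substituting this into the middle term gives $\mathrm{r}_{21}\mathrm{r}_{13}=-\mathrm{r}_{12}\mathrm{r}_{13}$, whence $M(\mathrm{r})=-\big(\mathrm{r}_{12}\mathrm{r}_{13}-\mathrm{r}_{23}\mathrm{r}_{12}+\mathrm{r}_{13}\mathrm{r}_{23}\big)$, which is precisely the negative of the left-hand side of \eqref{eq_YBE}. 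By hypothesis this vanishes, so $M(\mathrm{r})=0$ and \eqref{eq_coboundarycoal1} holds trivially.

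With all three conditions verified, Theorem~\ref{thm:main} yields that $(A,\Delta)$ is an anti-flexible bialgebra, completing the argument. The only place that requires care is the index and sign bookkeeping establishing $\mathrm{r}_{21}\mathrm{r}_{13}=-\mathrm{r}_{12}\mathrm{r}_{13}$ under skew-symmetry; once that identity is pinned down (by expanding $\mathrm{r}=\sum_i a_i\otimes b_i$ and comparing $\sum_{i,j}(b_j\cdot a_i)\otimes a_j\otimes b_i$ with $-\sum_{i,j}(a_j\cdot a_i)\otimes b_j\otimes b_i$ via the relation $\sum_j b_j\otimes a_j=-\sum_j a_j\otimes b_j$), everything else is a direct reading-off of the hypotheses. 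I expect no genuine obstacle: the corollary is designed to be the immediate skew-symmetric specialization of Theorem~\ref{thm:main}, in which the two ``$\mathrm{r}+\sigma\mathrm{r}$'' conditions collapse and the coassociativity-type condition reduces exactly to \eqref{eq_YBE}.
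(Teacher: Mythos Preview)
Your proof is correct and follows exactly the route the paper intends: the corollary is stated there as a direct consequence of Theorem~\ref{thm:main}, and you have supplied precisely the missing two lines, namely that skew-symmetry forces $\mathrm{r}+\sigma\mathrm{r}=0$ (killing \eqref{eq_coboundary1} and \eqref{eq_coboundary2}) and $\mathrm{r}_{21}=-\mathrm{r}_{12}$ (turning $M(\mathrm{r})$ into the negative of \eqref{eq_YBE}). There is nothing to add.
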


\begin{rmk}
In fact, there is certain "freedom degree" for the construction of $\Delta:A\rightarrow A\otimes A$
defined by Eq.~\eqref{eq_coboundary}. Explicitly, assume
\begin{equation}\label{eq:Delta'}
\Delta'(x)=\Delta(x)+\Pi(x)(\mathrm{r}+\sigma (\mathrm{r}))= 
({\rm id}\otimes L_\cdot(x)+\Pi(x))\mathrm{r}+(R_\cdot(x)\otimes {\rm id}+
\Pi(x))\sigma(\mathrm{r}),\;\;\forall x\in A,
\end{equation}
where $\Pi(x)$ is an operator depending on $x$ acting on $A\otimes A$.
 Then by a direct and similar proof as of Theorem~\ref{thm:main} or by Theorem~\ref{thm:main} 
 through the relationship between $\Delta'$ and $\Delta$, one can show
that $(A,\Delta')$ is an anti-flexible bialgebra if and only if the following equations hold:
\begin{eqnarray*}
&&{\rm LHS}\;\;{\rm of} \;\;{\rm Eq.}\;\; (\ref{eq_coboundary1})+A(x)(\mathrm{r}+\sigma(\mathrm{r}))=0,\\
&&{\rm LHS}\;\;{\rm of} \;\;{\rm Eq.}\;\; (\ref{eq_coboundary2})+B(x)(\mathrm{r}+\sigma(\mathrm{r}))=0,\\
&&{\rm LHS}\;\;{\rm of} \;\;{\rm Eq.}\;\; (\ref{eq_coboundarycoal1})+C_{12}(x)(\mathrm{r}_{12}+\mathrm{r}_{21})
+C_{23}(x)(\mathrm{r}_{23}+\mathrm{r}_{32})+C_{13}(x)(\mathrm{r}_{13}+\mathrm{r}_{31})=0,
\end{eqnarray*}
where $A(x), B(x)$ are operators depending on $x$ acting on $A\otimes A$, $C_{12}(x),C_{23}(x),C_{13}(x)$ are operators
depending on $x$ acting on $A\otimes A\otimes A$ (it is the component itself when the component acts on $1$), and all of them are related to $\Pi(x)$.
Hence by this conclusion  (which is independent of Theorem~\ref{thm:main}) directly,  we still show that $(A,\Delta')$ 
is an anti-flexible bialgebra when $\mathrm{r}$ is skew-symmetric and $\mathrm{r}$ satisfies Eq.~(\ref{eq_YBE}).
That is, this $\Delta'$ defined by Eq.~(\ref{eq:Delta'}), also leads to the introduction of Eq.~(\ref{eq_YBE}). 
Note that when $\mathrm{r}$ is skew-symmetric, $\Delta'=\Delta$.
\end{rmk}

\begin{defi}
Let $(A, \cdot )$ be an anti-flexible algebra and $\mathrm{r}\in A\otimes A$. Eq.~(\ref{eq_YBE}) is called the
{\bf anti-flexible Yang-Baxter equation} (AFYBE) in $(A,\cdot)$.
\end{defi}

\begin{rmk}
The notion of anti-flexible Yang-Baxter equation in an anti-flexible algebra is due
to the fact that it is an analogue of the classical Yang-Baxter equation
in a Lie algebra (\cite{Drinfeld}) or the associative Yang-Baxter
equation in an associative algebra (\cite{Bai_Double}).
\end{rmk}

It is a remarkable observation and an unexpected consequence that both the anti-flexible
Yang-Baxter equation in an anti-flexible algebra and the associative Yang-Baxter equation (\cite{Bai_Double}) in an associative algebra
 have the same form Eq.~(\ref{eq_YBE}). Hence both these two equations have some common properties. At the end of this section, we give two properties of anti-flexible Yang-Baxter equation whose proofs are omitted since
the proofs are the same as in the case of associative Yang-Baxter equation.

Let $A$ be a vector space. For any $\mathrm{r}\in
A\otimes A$, $\mathrm{r}$ can be regarded as a linear map from $A^*$ to $A$ in the
following way:
\begin{equation}\langle \mathrm{r}, u^*\otimes v^*\rangle   =\langle  \mathrm{r}(u^*),v^*\rangle  
 ,\;\;\forall\; u^*,v^*\in A^*.
 \end{equation}

\begin{pro}\label{pro:of}
    Let $(A, \cdot)$ be an anti-flexible algebra and $\mathrm{r}\in A\otimes A$ be skew-symmetric.
Then $\mathrm{r}$ is solution of anti-flexible Yang-Baxter equation if and only
    if $\mathrm{r}$ satisfies
    \begin{equation}\label{eq:of}
    \mathrm{r}(a)\cdot \mathrm{r}(b)=\mathrm{r}(R_{\cdot}^*(\mathrm{r}(a))b+
    L_{\cdot}^*(\mathrm{r}(b))a),\;\;\forall a,b\in A^*.
    \end{equation}
\end{pro}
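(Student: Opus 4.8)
The plan is to reduce the tensor equation \eqref{eq_YBE} to the operator equation \eqref{eq:of} by pairing the left-hand side of \eqref{eq_YBE}, which I denote $W(\mathrm{r})=\mathrm{r}_{12}\mathrm{r}_{13}-\mathrm{r}_{23}\mathrm{r}_{12}+\mathrm{r}_{13}\mathrm{r}_{23}\in A\otimes A\otimes A$, against an arbitrary decomposable tensor $a\otimes b\otimes c\in A^*\otimes A^*\otimes A^*$, and recognizing the resulting scalar as the pairing of \eqref{eq:of} with $c$. Writing $\mathrm{r}=\sum_i a_i\otimes b_i$, the induced map $\mathrm{r}:A^*\to A$ is $\mathrm{r}(a)=\sum_i\langle a_i,a\rangle b_i$, and skew-symmetry reads $\sum_i a_i\otimes b_i=-\sum_i b_i\otimes a_i$. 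I adopt throughout the convention that the element of $A$ sits on the left of each pairing and the element of $A^*$ on the right.

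First I would treat the two easy terms, for which skew-symmetry is not needed. Using $\mathrm{r}_{13}\mathrm{r}_{23}=\sum_{i,j}a_i\otimes a_j\otimes b_i\cdot b_j$, one finds $\langle \mathrm{r}_{13}\mathrm{r}_{23},a\otimes b\otimes c\rangle=\sum_{i,j}\langle a_i,a\rangle\langle a_j,b\rangle\langle b_i\cdot b_j,c\rangle=\langle \mathrm{r}(a)\cdot\mathrm{r}(b),c\rangle$. Next, using $\mathrm{r}_{23}\mathrm{r}_{12}=\sum_{i,j}a_j\otimes a_i\cdot b_j\otimes b_i$ together with the definition $\langle R_{\cdot}^*(x)u^*,v\rangle=\langle u^*,v\cdot x\rangle$ and substituting $\mathrm{r}(a)=\sum_k\langle a_k,a\rangle b_k$, a short unwinding identifies $\langle \mathrm{r}_{23}\mathrm{r}_{12},a\otimes b\otimes c\rangle$ with $\langle \mathrm{r}(R_{\cdot}^*(\mathrm{r}(a))b),c\rangle$ after relabeling the summation indices.

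The only delicate term is $\mathrm{r}_{12}\mathrm{r}_{13}=\sum_{i,j}a_i\cdot a_j\otimes b_i\otimes b_j$, whose naive pairing $\sum_{i,j}\langle a_i\cdot a_j,a\rangle\langle b_i,b\rangle\langle b_j,c\rangle$ matches nothing directly. Here I would apply skew-symmetry to the copy of $\mathrm{r}$ carrying the index $i$, i.e. replace $\sum_i a_i\otimes b_i$ by $-\sum_i b_i\otimes a_i$ in the $(a_i,b_i)$ slots, obtaining $-\sum_{i,j}\langle b_i\cdot a_j,a\rangle\langle a_i,b\rangle\langle b_j,c\rangle$. Unwinding $\mathrm{r}(L_{\cdot}^*(\mathrm{r}(b))a)$ via $\langle L_{\cdot}^*(x)u^*,v\rangle=\langle u^*,x\cdot v\rangle$ and $\mathrm{r}(b)=\sum_i\langle a_i,b\rangle b_i$ shows this equals $-\langle \mathrm{r}(L_{\cdot}^*(\mathrm{r}(b))a),c\rangle$. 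Assembling the three contributions yields the master identity $\langle W(\mathrm{r}),a\otimes b\otimes c\rangle=\langle \mathrm{r}(a)\cdot\mathrm{r}(b)-\mathrm{r}(R_{\cdot}^*(\mathrm{r}(a))b+L_{\cdot}^*(\mathrm{r}(b))a),c\rangle$, valid for all $a,b,c\in A^*$.

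Finally, since the pairing between $A$ and $A^*$ is nondegenerate, letting $c$ range over $A^*$ shows that $\langle W(\mathrm{r}),a\otimes b\otimes c\rangle=0$ for all $c$ is equivalent to $\mathrm{r}(a)\cdot\mathrm{r}(b)-\mathrm{r}(R_{\cdot}^*(\mathrm{r}(a))b+L_{\cdot}^*(\mathrm{r}(b))a)=0$; letting $a,b$ vary then gives that $W(\mathrm{r})=0$, i.e. $\mathrm{r}$ solves the anti-flexible Yang--Baxter equation, holds if and only if \eqref{eq:of} holds. I expect the main obstacle to be purely the index bookkeeping in the first term: applying skew-symmetry to exactly the right copy of $\mathrm{r}$ and keeping the sign correct. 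It is worth noting that the anti-flexible identity itself is never invoked in this equivalence, which is consistent with the earlier observation that the anti-flexible and associative Yang--Baxter equations share the same form, so that the proof runs identically to the associative case in \cite{Bai_Double}.
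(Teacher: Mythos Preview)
Your proof is correct: the pairing computation for each of the three summands of $W(\mathrm{r})$ is right, the use of skew-symmetry on the $i$-indexed copy of $\mathrm{r}$ in $\mathrm{r}_{12}\mathrm{r}_{13}$ is exactly what is needed, and the final nondegeneracy argument closes the equivalence. The paper does not give its own proof of this proposition---it explicitly omits it on the grounds that the argument is identical to the associative Yang--Baxter case in \cite{Bai_Double}---and your computation is precisely that standard argument, so your approach is the intended one.
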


\begin{rmk}
Since the dual bimodules of both anti-flexible and associative
algebras have the same form (see Remark~\ref{rmk:same}), the
interpretation of anti-flexible Yang-Baxter equation in terms of
operator form ~(\ref{eq:of}) in the above Proposition~\ref{pro:of}
explains partly why the anti-flexible Yang-Baxter equation has the
same form as  of the associative Yang-Baxter equation.
\end{rmk}

\begin{thm}  Let $(A, \cdot)$ be an anti-flexible algebra and $\mathrm{r}\in A\otimes A$.
 Suppose that $\mathrm{r}$ is antisymmetric and
nondegenerate. Then $\mathrm{r}$ is a solution of anti-flexible Yang-Baxter
equation in $(A,\cdot)$ if and only if the inverse of the isomorphism
$A^*\rightarrow A$ induced by $\mathrm{r}$, regarded as a bilinear form
$\omega$ on $A$ (that is, $\omega(x,y)=\langle  \mathrm{r}^{-1}x,y\rangle$
for any $x,y\in A$), satisfies
\begin{equation}\label{eq:Conn}
    \omega(x\cdot y, z)+\omega(y\cdot z, x)+\omega(z\cdot x, y)=0,\;\;\forall x,y,z\in A.
    \end{equation}
\end{thm}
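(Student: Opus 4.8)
The plan is to reduce the statement to the operator form of the anti-flexible Yang-Baxter equation already established in Proposition~\ref{pro:of}, so that the whole proof becomes a translation between the operator identity \eqref{eq:of} and the cyclic identity \eqref{eq:Conn}. Since $\mathrm{r}$ is nondegenerate, it is a linear isomorphism $\mathrm{r}:A^*\to A$; I write $\mathrm{r}^{-1}:A\to A^*$ for its inverse, so that $\omega(x,y)=\langle \mathrm{r}^{-1}x,y\rangle$. The first thing I would record is that the skew-symmetry of the tensor $\mathrm{r}$ forces $\omega$ to be a skew-symmetric bilinear form: writing $x=\mathrm{r}(a)$, $y=\mathrm{r}(b)$ and using that $\sigma(\mathrm{r})=-\mathrm{r}$ is equivalent to $\langle \mathrm{r}(a),b\rangle=-\langle \mathrm{r}(b),a\rangle$, one obtains $\omega(x,y)=-\omega(y,x)$ for all $x,y\in A$.

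Next I would rewrite the defining identity \eqref{eq:of}. Because $\mathrm{r}$ is bijective, every pair $x,y\in A$ can be written uniquely as $x=\mathrm{r}(a)$, $y=\mathrm{r}(b)$ with $a=\mathrm{r}^{-1}(x)$, $b=\mathrm{r}^{-1}(y)$, and applying the isomorphism $\mathrm{r}^{-1}$ to both sides of \eqref{eq:of} (which is reversible since $\mathrm{r}$ is injective) shows that, by Proposition~\ref{pro:of}, $\mathrm{r}$ solves the anti-flexible Yang-Baxter equation if and only if
\[
\mathrm{r}^{-1}(x\cdot y)=R^*_{\cdot}(x)\mathrm{r}^{-1}(y)+L^*_{\cdot}(y)\mathrm{r}^{-1}(x),\qquad\forall x,y\in A.
\]

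The key step is then to pair this identity with an arbitrary $z\in A$ and translate each term into a value of $\omega$, using the defining relations of the dual operators $\langle R^*_{\cdot}(x)u^*,z\rangle=\langle u^*,z\cdot x\rangle$ and $\langle L^*_{\cdot}(y)u^*,z\rangle=\langle u^*,y\cdot z\rangle$. With $u^*=\mathrm{r}^{-1}(y)$ and $u^*=\mathrm{r}^{-1}(x)$ respectively, this yields
\[
\omega(x\cdot y,z)=\omega(y,z\cdot x)+\omega(x,y\cdot z),\qquad\forall x,y,z\in A.
\]
Finally I would invoke the skew-symmetry of $\omega$ to replace $\omega(y,z\cdot x)=-\omega(z\cdot x,y)$ and $\omega(x,y\cdot z)=-\omega(y\cdot z,x)$, after which the displayed identity rearranges exactly into $\omega(x\cdot y,z)+\omega(y\cdot z,x)+\omega(z\cdot x,y)=0$, which is \eqref{eq:Conn}. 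All implications along the way are equivalences, giving both directions at once.

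Because Proposition~\ref{pro:of} already packages the content of the anti-flexible Yang-Baxter equation into operator form, there is no genuine analytic or combinatorial obstacle here; the proof is the same computation as in the associative case. The only points requiring care are the bookkeeping of the dual operators together with the pairing conventions, and the verification that skew-symmetry of $\mathrm{r}$ transfers to skew-symmetry of $\omega$ — it is precisely this last fact that lets one fold the non-cyclic identity $\omega(x\cdot y,z)=\omega(y,z\cdot x)+\omega(x,y\cdot z)$ into the cyclically symmetric form \eqref{eq:Conn}.
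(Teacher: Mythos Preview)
Your argument is correct and is precisely the standard reduction via Proposition~\ref{pro:of} that the paper has in mind; in fact the paper omits the proof entirely, noting only that it is identical to the associative case, and your write-up is exactly that computation.
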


\section{$\mathcal O$-operators of anti-flexible algebras and pre-anti-flexible algebras}

In this section, we introduce the notions of $\mathcal O$-operators of 
anti-flexible algebras and pre-anti-flexible algebras to construct
skew-symmetric solutions of anti-flexible Yang-Baxter equation and hence to construct
anti-flexible bialgebras.

\begin{defi}
Let $(l,r, V)$ be a bimodule of  an anti-flexible algebra $(A, \cdot)$.
A linear map $T:V\rightarrow A$ is called an {\bf $\mathcal{O}$-operator associated to
$(l,r, V)$} if $T$ satisfies
\begin{equation}
T(u)\cdot T(v)=T(l(T(u))v+r(T(v))u),\;\;\forall u,v\in V.
\end{equation}
\end{defi}

\begin{ex}
Let $(A,\cdot)$ be an anti-flexible algebra. An $\mathcal O$-operator $R_B$ associated to the regular bimodule $(L,R,A)$ is
called a {\bf Rota-Baxter operator of weight zero}, that is, $R_B$
satisfies
\begin{equation}
R_B(x)\cdot R_B(y)=R_B(R_B(x)\cdot y+x\cdot R_B(y)),\;\;\forall
x,y\in A.
\end{equation}
\end{ex}

\begin{ex}
Let $(A,\cdot)$ be an anti-flexible algebra and $\mathrm{r}\in A\otimes A$.
If $\mathrm{r}$ is skew-symmetric, then by Proposition~\ref{pro:of}, $\mathrm{r}$ is
a solution of anti-flexible Yang-Baxter equation if and only if
$\mathrm{r}$ regarded as a linear map from $A^*$ to $A$ is an $\mathcal
O$-operator associated to the bimodule
$(R^*_\cdot,L^*_\cdot,A^*)$.
\end{ex}

There is the following construction of (skew-symmetric) solutions
of anti-flexible Yang-Baxter equation in a semi-direct product
anti-flexible algebra from an $\mathcal O$-operator of an
anti-flexible algebra which is similar as for associative algebras
(\cite[Theorem 2.5.5]{Bai_Double}, hence the proof is omitted).

\begin{thm}\label{thm:cfromO}
Let $(l,r, V)$ be a bimodule of  an anti-flexible algebra $(A, \cdot)$.
Let $T: V\rightarrow A$ be a linear map which is identified as an element in $(A\ltimes_{r^*, l^*} V^* )\oplus (A\ltimes_{r^*, l^*} V^*)$.
Then $\mathrm{r}=T-\sigma (T)$ is a skew-symmetric solution of anti-flexible Yang-Baxter equation in $A\ltimes_{r^*, l^*} V^*$ if only if $T$ is an
    $\mathcal{O}$-operator associated to the bimodule $(l, r, V)$.
\end{thm}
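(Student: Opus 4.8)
The statement to be proved (Theorem~\ref{thm:cfromO}) says that for a bimodule $(l,r,V)$ of an anti-flexible algebra $(A,\cdot)$ and a linear map $T:V\to A$, the skew-symmetric tensor $\mathrm{r}=T-\sigma(T)$ is a solution of the anti-flexible Yang-Baxter equation in the semi-direct product $A\ltimes_{r^*,l^*}V^*$ if and only if $T$ is an $\mathcal{O}$-operator associated to $(l,r,V)$. The natural strategy is to invoke Proposition~\ref{pro:of}, which characterizes skew-symmetric solutions of the AFYBE in operator form: $\mathrm{r}$ is a solution if and only if $\mathrm{r}(a)\cdot\mathrm{r}(b)=\mathrm{r}(R_\cdot^*(\mathrm{r}(a))b+L_\cdot^*(\mathrm{r}(b))a)$ for all $a,b$ in the dual. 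Here the ambient algebra is $A\ltimes_{r^*,l^*}V^*$ rather than $A$ itself, so I would first set up the identification carefully: the underlying space is $A\oplus V^*$, its dual is $A^*\oplus V$, and the tensor $\mathrm{r}=T-\sigma(T)$ lives in $(A\oplus V^*)\otimes(A\oplus V^*)$. The key computation is to determine what the linear map $\mathrm{r}:(A\oplus V^*)^*\to A\oplus V^*$ does on an element $u\in V\subset A^*\oplus V$.

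\textbf{Carrying it out.} First I would record that $T$, viewed as an element of $A\otimes V^*\subset(A\oplus V^*)^{\otimes 2}$, sends, under the pairing, a covector to its value; concretely $\mathrm{r}=T-\sigma(T)$ as a map sends $u\in V$ to $T(u)\in A$ and sends $a\in A^*$ to $-T^*(a)\in V^*$ (up to sign conventions fixed by the pairing). Thus on the subspace $V$ the map $\mathrm{r}$ is essentially $T$, which is exactly the piece that the $\mathcal{O}$-operator condition constrains. The second step is to unwind the operator-form equation from Proposition~\ref{pro:of} applied in the semi-direct product: I would evaluate both sides on a pair $u,v\in V$, using that the left and right multiplications in $A\ltimes_{r^*,l^*}V^*$ restrict on $A$ to $L_\cdot,R_\cdot$ and act on $V^*$ through $r^*,l^*$, and that the dual actions $L^*,R^*$ appearing in Proposition~\ref{pro:of} restrict on $V$ to $l,r$ respectively because the semi-direct product was built precisely with the bimodule $(r^*,l^*,V^*)$. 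Matching the two sides on $V\otimes V$ should collapse exactly to
\begin{equation*}
T(u)\cdot T(v)=T\big(l(T(u))v+r(T(v))u\big),
\end{equation*}
which is the $\mathcal{O}$-operator identity.

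\textbf{The crux.} The main obstacle is bookkeeping rather than conceptual: one must check that the operator-form equation of Proposition~\ref{pro:of}, when evaluated on arguments coming from the \emph{other} summands (namely $a\in A^*$, or mixed pairs $a\in A^*, v\in V$), is automatically satisfied and imposes no extra condition beyond the $V\otimes V$ component. I expect this to follow because $\mathrm{r}=T-\sigma(T)$ has image in $A$ when applied to $V$ and image in $V^*$ when applied to $A^*$, together with the compatibility axioms \eqref{eqbimodule1}--\eqref{eqbimodule2} of the bimodule and the defining relation of the dual bimodule $(r^*,l^*,V^*)$; the mixed components should reduce to identities guaranteed by these axioms. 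Since the entire argument parallels the associative case verbatim (\cite[Theorem 2.5.5]{Bai_Double}) and the anti-flexible and associative Yang-Baxter equations share the same form Eq.~\eqref{eq_YBE} as well as the same form of dual bimodule (Remark~\ref{rmk:same}), I would cite that parallel and omit the routine verification, exactly as the paper does, presenting only the reduction to Proposition~\ref{pro:of} and the identification of the $V\otimes V$ component with the $\mathcal{O}$-operator condition as the substantive content.
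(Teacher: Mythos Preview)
Your proposal is correct and matches the paper's approach exactly: the paper omits the proof entirely, noting that it is identical to \cite[Theorem 2.5.5]{Bai_Double} for the associative case, and your outline via Proposition~\ref{pro:of} together with the identification of $\mathrm{r}$ on $V$ with $T$ (and on $A^*$ with $-T^*$) is precisely that argument. Your observation that the mixed components impose no new conditions beyond the bimodule axioms is the expected routine verification, and your closing remark that one may simply cite the associative parallel is literally what the paper does.
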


\begin{defi}
Let $A$ be a vector space with two bilinear products $\prec, \succ: A\otimes A \rightarrow A$.
We call it a {\bf pre-anti-flexible algebra} denoted by $(A, \prec, \succ)$ if for any $x,y,z\in A$, the following
equations are satisfied
 \begin{equation}
(x,y,z)_{_m}=(z,y,x){_m},
\end{equation}
\begin{equation}
(x,y,z)_{_l}=(z,y,x)_{_r},
\end{equation}
where
\begin{equation}\label{eq_dendri_m}
(x,y,z)_{_m}:=(x \succ y) \prec z-x \succ (y \prec z),
\end{equation}
\begin{equation}\label{eq_dendri_l}
(x,y,z)_{_l}:=(x\ast y)\succ z-x\succ (y\succ z),
\end{equation}
\begin{equation}\label{eq_dendri_r}
(x,y,z)_{_r}:=(x\prec y)\prec z-x\prec (y\ast z),
\end{equation}
here $x\ast y=x\prec y+x\succ y$.
\end{defi}

\begin{rmk}
Note that if both hand sides in Eqs.~\eqref{eq_dendri_m}, \eqref{eq_dendri_l} and \eqref{eq_dendri_r} are zero, that is,
\begin{equation}
(x,y,z)_{_m}=0, \quad (z,y,x)_{_l}=0, \quad (x,y,z)_{_r}=0,
\end{equation}
then it exactly gives the definition of a {\bf dendriform algebra} which was introduced by Loday in \cite{Loday}. Hence
any dendriform algebra is a pre-anti-flexible algebra, that is, pre-anti-flexible algebras can be regarded as a natural generalization of dendriform algebras.
On the other hand, from the point of view of operads, like dendriform algebras being the splitting of associative algebras,
pre-anti-flexible algebras are the splitting of anti-flexible algebras (\cite{BBGN,PBG}).
\end{rmk}

\begin{pro}\label{pro:asso}
Let $(A, \prec,\succ)$ be a pre-anti-flexible algebra. Define a bilinear product $\ast:A\otimes A\rightarrow A$ by
\begin{equation}\label{eq_pre_anti}
    x\ast y= x\prec y+ x\succ y,\;\;\forall x,y\in A.
    \end{equation}
Then $(A,\ast)$ is an anti-flexible algebra, which is called the {\bf associated anti-flexible algebra of $(A,\prec,\succ)$}.
\end{pro}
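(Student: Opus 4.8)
The plan is to compute the associator of $\ast$ directly and show that it decomposes into the three ``partial associators'' appearing in the definition of a pre-anti-flexible algebra. Writing $(x,y,z)_\ast := (x\ast y)\ast z - x\ast(y\ast z)$ for the associator of the product $\ast$ defined by Eq.~\eqref{eq_pre_anti}, the goal is precisely to verify the anti-flexible identity $(x,y,z)_\ast = (z,y,x)_\ast$ for all $x,y,z\in A$.

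First I would expand $\ast$ in both terms using $x\ast y = x\prec y + x\succ y$ and sort the resulting products according to whether the outermost operation is $\prec$ or $\succ$. Grouping the $\prec$-outermost terms and inserting $(x\ast y)\prec z = (x\prec y)\prec z + (x\succ y)\prec z$, the combination $(x\prec y)\prec z - x\prec(y\ast z)$ is exactly $(x,y,z)_r$; the leftover $(x\succ y)\prec z$ pairs with the term $-x\succ(y\prec z)$ coming from the split $x\succ(y\ast z)=x\succ(y\prec z)+x\succ(y\succ z)$ to give $(x,y,z)_m$; and the $\succ$-outermost combination $(x\ast y)\succ z - x\succ(y\succ z)$ is exactly $(x,y,z)_l$. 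This yields the key identity
\[
(x,y,z)_\ast = (x,y,z)_l + (x,y,z)_m + (x,y,z)_r,
\]
which holds for any pair of bilinear products, before imposing any axioms.

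With this decomposition in hand, the anti-flexible identity follows immediately from the two defining relations of a pre-anti-flexible algebra. Indeed, the relation $(x,y,z)_m = (z,y,x)_m$ handles the middle term; the relation $(x,y,z)_l = (z,y,x)_r$ converts the $l$-term of $(x,y,z)_\ast$ into the $r$-term of $(z,y,x)_\ast$; and the same relation read with $x$ and $z$ interchanged, namely $(x,y,z)_r = (z,y,x)_l$, converts the $r$-term into the $l$-term. Summing the three converted terms reassembles exactly $(z,y,x)_l + (z,y,x)_m + (z,y,x)_r = (z,y,x)_\ast$, which is the desired identity.

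There is essentially no conceptual obstacle here; the only thing to watch is the bookkeeping in the expansion step, in particular correctly splitting $x\succ(y\ast z)$ and matching the single ``mixed'' term $(x\succ y)\prec z - x\succ(y\prec z)$ to $(x,y,z)_m$ rather than dropping it. Once the decomposition into the $l$-, $m$-, and $r$-associators is established cleanly, the remainder is a one-line application of the two pre-anti-flexible axioms.
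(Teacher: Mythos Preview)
Your proof is correct and follows exactly the same approach as the paper: you establish the decomposition $(x,y,z)_\ast = (x,y,z)_l + (x,y,z)_m + (x,y,z)_r$ and then apply the two pre-anti-flexible axioms to swap $x$ and $z$. The paper's proof is in fact terser, writing this as a single displayed chain of equalities without spelling out the bookkeeping you (correctly) describe.
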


\begin{proof}
 Set
$$(x,y,z)_\ast=(x\ast y)\ast z-x\ast (y\ast z),\;\;\forall x,y,z\in A.$$
Then for any $x,y,z\in A$, we have
$$
(x,y,z)_\ast=(x,y,z)_{_m}+(x,y,z)_{_l}+(x,y,z)_{_r}
=
(z,y,x)_{_m}+(z,y,x)_{_l}+(z,y,x)_{_r}=(z, y, x)_\ast.$$
Hence $(A,\ast)$ is an anti-flexible algebra.
\end{proof}

Let $(A, \prec, \succ)$ be a pre-anti-flexible algebra. For any $x\in A$,
let $L_\succ(x), R_\prec(x)$ denote
the left multiplication operator of $(A,\prec)$ and the right multiplication operator of
$(A,\succ)$ respectively, that is, $L_\succ (x)(y)=x\succ y,
\;\;
R_\prec(x)(y)=y\prec x,\;\;\forall\;x, y\in A$. Moreover, let
$L_\succ, R_\prec: A\rightarrow \frak{gl}(A)$ be two
linear maps with $x\rightarrow L_\succ(x)$ and $x\rightarrow R_\prec (x)$
respectively.

\begin{pro}
 Let $(A, \prec, \succ)$ be a pre-anti-flexible algebra. Then  $(L_{_\succ}, R_{_\prec}, A)$
 is a bimodule of the associated anti-flexible algebra $(A,\ast)$, where $\ast$ is defined by Eq.~\eqref{eq_pre_anti}.
\end{pro}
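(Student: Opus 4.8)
The plan is to verify directly the two defining identities of a bimodule given in Definition~\ref{bimodule}, namely Eqs.~\eqref{eqbimodule1} and \eqref{eqbimodule2}, for the pair $(l,r)=(L_\succ,R_\prec)$ relative to the associated anti-flexible algebra $(A,\ast)$ from Proposition~\ref{pro:asso}. Since both identities are equalities of operators in $\End(A)$, I would evaluate each side on an arbitrary $z\in A$, rewrite everything via $L_\succ(x)(z)=x\succ z$ and $R_\prec(x)(z)=z\prec x$, and then recognize the resulting relations as the associators $(\cdot,\cdot,\cdot)_m,(\cdot,\cdot,\cdot)_l,(\cdot,\cdot,\cdot)_r$ of Eqs.~\eqref{eq_dendri_m}--\eqref{eq_dendri_r}.

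For the first bimodule axiom \eqref{eqbimodule1}, applying both sides to $z$ turns the left-hand side $L_\succ(x\ast y)-L_\succ(x)L_\succ(y)$ into $(x\ast y)\succ z-x\succ(y\succ z)$, which is exactly $(x,y,z)_{_l}$, and turns the right-hand side $R_\prec(x)R_\prec(y)-R_\prec(y\ast x)$ into $(z\prec y)\prec x-z\prec(y\ast x)$, which is exactly $(z,y,x)_{_r}$. Hence \eqref{eqbimodule1} is literally the second defining identity of a pre-anti-flexible algebra, $(x,y,z)_{_l}=(z,y,x)_{_r}$, and so holds by hypothesis.

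For the second bimodule axiom \eqref{eqbimodule2}, evaluating at $z$ gives on the left $x\succ(z\prec y)-(x\succ z)\prec y=-(x,z,y)_{_m}$ and on the right $y\succ(z\prec x)-(y\succ z)\prec x=-(y,z,x)_{_m}$. Thus \eqref{eqbimodule2} is equivalent to $(x,z,y)_{_m}=(y,z,x)_{_m}$, which is precisely the first defining identity $(x,y,z)_{_m}=(z,y,x)_{_m}$ (the symmetry of the $m$-associator in its first and third arguments) applied with middle argument $z$ and outer arguments $x,y$; so it too holds by hypothesis.

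I expect no serious obstacle: once the operator identities are unwound on a test vector, each bimodule axiom coincides with one of the two pre-anti-flexible axioms. The only point requiring care is the bookkeeping of which variable occupies which slot—in particular noticing that \eqref{eqbimodule2} matches the $m$-associator identity only after placing $z$ in the middle, and correctly tracking the sign when factoring each difference as an associator. Having confirmed both \eqref{eqbimodule1} and \eqref{eqbimodule2}, I would conclude that $(L_\succ,R_\prec,A)$ is a bimodule of $(A,\ast)$.
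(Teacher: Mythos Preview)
Your proposal is correct and follows essentially the same route as the paper: evaluate each bimodule axiom on a test vector $z$ and identify the resulting equalities with the pre-anti-flexible identities $(x,y,z)_{_l}=(z,y,x)_{_r}$ and $(x,z,y)_{_m}=(y,z,x)_{_m}$. Your sign bookkeeping for the second axiom is in fact more careful than the paper's displayed computation (where the minus signs are tacitly absorbed), but the argument is otherwise identical.
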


\begin{proof}
For any $x,y,z\in A$, we have
\begin{eqnarray*}
&&(L_\succ (x\ast y)-L_\succ(x)L_\succ(y))(z)
=(x\ast y) \succ z- x \succ (y \succ z)=(x,y,z)_{_l},\\
&&(R_\prec(x)R_\prec(y)-R_\prec(y\ast x))(z)
(z\prec y)\prec x-z\prec (y\ast x)=(z,y,x)_r,\\
&&(L_\succ(x)R_\prec(y)-R_\prec(y)L_\succ(x))(z)=x\succ(z\prec y)-(x\succ z )\prec y
=(x,z,y)_m,\\
&&(L_\succ(y)R_\prec(x)-R_\prec(x)L_\succ(y))(z)=y\succ(z\prec x)-(y\succ z )\prec x
=(y,z,x)_m.
\end{eqnarray*}
Hence $(L_{_\succ}, R_{_\prec}, A)$ is a bimodule of $(A,\ast)$.
\end{proof}

A direct consequence is given as follows.

\begin{cor} \label{cor:id}
Let $(A, \prec, \succ)$ be a pre-anti-flexible algebra. Then the identity map ${\rm id}$ is an $\mathcal O$-operator of the associated
anti-flexible algebra $(A,\ast)$ associated to the bimodule $(L_\succ,R_\prec,A)$.
\end{cor}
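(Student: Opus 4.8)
The plan is to verify directly that the identity map satisfies the defining equation of an $\mathcal{O}$-operator. Recall that a linear map $T:V\rightarrow A$ is an $\mathcal{O}$-operator associated to a bimodule $(l,r,V)$ precisely when $T(u)\cdot T(v)=T(l(T(u))v+r(T(v))u)$ for all $u,v\in V$. Here the relevant bimodule is $(L_\succ,R_\prec,A)$ of the associated anti-flexible algebra $(A,\ast)$, established in the preceding proposition, so I take $V=A$, $T=\mathrm{id}$, $l=L_\succ$, $r=R_\prec$, and the product to be $\ast$.

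First I would substitute $T=\mathrm{id}$ into the defining equation. Since $\mathrm{id}(x)=x$, the left-hand side reduces to $x\ast y$ and the right-hand side reduces to $\mathrm{id}\bigl(L_\succ(x)y+R_\prec(y)x\bigr)=L_\succ(x)y+R_\prec(y)x$ for arbitrary $x,y\in A$. Next I would unfold the two multiplication operators by their definitions, namely $L_\succ(x)(y)=x\succ y$ and $R_\prec(y)(x)=x\prec y$, which rewrites the right-hand side as $x\succ y+x\prec y$.

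Finally I would compare this with the definition of the product $\ast$ in Eq.~\eqref{eq_pre_anti}, namely $x\ast y=x\prec y+x\succ y$, and observe that the two expressions coincide. Hence $\mathrm{id}$ satisfies the $\mathcal{O}$-operator identity, which is the desired conclusion. There is essentially no obstacle to overcome: the statement is an immediate unwinding of the definitions of $L_\succ$, $R_\prec$, and $\ast$, and the only substantive input beyond these definitions is the previously verified fact that $(L_\succ,R_\prec,A)$ is genuinely a bimodule of $(A,\ast)$, which is what makes the $\mathcal{O}$-operator condition meaningful in the first place.
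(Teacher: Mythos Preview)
Your proof is correct and matches the paper's approach: the paper states this corollary as ``a direct consequence'' with no proof given, and your direct unwinding of the definitions of $L_\succ$, $R_\prec$, and $\ast$ is exactly the intended one-line verification.
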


\begin{thm} Let $(l,r, V)$ be a bimodule of  an anti-flexible algebra $(A, \cdot)$. Let $T:V\rightarrow A$ be an ${\mathcal O}$-operator
associated to $(l,r,V)$. Then there exists a pre-anti-flexible algebra
structure on $V$ given by
\begin{equation}\label{eq:Vp}
u\succ v=l(T(u))v,\;\;u\prec v=r(T(v))u,\;\;\forall\; u,v\in
V.\end{equation} So there is an associated anti-flexible algebra
structure on $V$ given by Eq.~\eqref{eq_pre_anti} and $T$ is a
homomorphism of anti-flexible algebras. Moreover,
$T(V)=\{T(v)|v\in V\}\subset A$ is an anti-flexible subalgebra of
$(A,\cdot)$ and there is an induced pre-anti-flexible algebra
structure on $T(V)$ given by
\begin{equation}\label{eq:Ap}T(u)\succ T(v)=T(u\succ v),\;\;T(u)\prec T(v)=T(u\prec v),\;\;\forall\; u,v\in V.\end{equation}
Its corresponding associated anti-flexible algebra structure on
$T(V)$ given by Eq.~\eqref{eq_pre_anti} is just the anti-flexible
subalgebra structure of $(A,\cdot)$ and $T$ is a homomorphism of
pre-anti-flexible algebras.\end{thm}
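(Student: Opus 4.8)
The plan is to verify the two defining identities of a pre-anti-flexible algebra directly for the products $\succ,\prec$ of Eq.~\eqref{eq:Vp}, reducing everything to operator identities on $\mathrm{End}(V)$ by means of the two bimodule axioms~\eqref{eqbimodule1} and \eqref{eqbimodule2} together with the single relation that defines an $\mathcal O$-operator, namely $T(u\ast v)=T(u)\cdot T(v)$ where $u\ast v=l(T(u))v+r(T(v))u$. First I would compute the middle associator. Since $u\succ v=l(T(u))v$ and $u\prec v=r(T(v))u$, one gets $(u,v,w)_{_m}=r(T(w))l(T(u))v-l(T(u))r(T(w))v$, so the identity $(u,v,w)_{_m}=(w,v,u)_{_m}$ amounts exactly to $r(T(w))l(T(u))-l(T(u))r(T(w))=r(T(u))l(T(w))-l(T(w))r(T(u))$, which is~\eqref{eqbimodule2} evaluated at $x=T(u)$, $y=T(w)$.

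For the second identity I would compute $(u,v,w)_{_l}=\bigl[l(T(u)\cdot T(v))-l(T(u))l(T(v))\bigr]w$, using $T(u\ast v)=T(u)\cdot T(v)$ to rewrite the term $(u\ast v)\succ w$. By~\eqref{eqbimodule1} the bracketed operator equals $r(T(u))r(T(v))-r(T(v)\cdot T(u))$. A parallel computation gives $(w,v,u)_{_r}=\bigl[r(T(u))r(T(v))-r(T(v)\cdot T(u))\bigr]w$, again invoking $T(v\ast u)=T(v)\cdot T(u)$ for the term $w\prec(v\ast u)$. The two expressions coincide, whence $(u,v,w)_{_l}=(w,v,u)_{_r}$. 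This shows $(V,\prec,\succ)$ is pre-anti-flexible; Proposition~\ref{pro:asso} then supplies the associated anti-flexible product $\ast$ of Eq.~\eqref{eq_pre_anti} on $V$, and the relation $T(u\ast v)=T(u)\cdot T(v)$ says precisely that $T\colon(V,\ast)\to(A,\cdot)$ is a homomorphism of anti-flexible algebras.

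For the remaining assertions, $T(V)$ is closed under $\cdot$ because $T(u)\cdot T(v)=T(u\ast v)\in T(V)$, so it is an anti-flexible subalgebra whose product is the $T$-image of $\ast$. The substantive point, and the step I expect to be the main obstacle, is the well-definedness of the induced products in Eq.~\eqref{eq:Ap} when $T$ is not injective, since $T(u)\succ T(v)=T(l(T(u))v)$ depends a priori on the chosen preimages $u,v$ rather than merely on $T(u),T(v)$. I would resolve this by proving that $\ker T$ is a pre-anti-flexible ideal of $(V,\prec,\succ)$: for $k\in\ker T$ and $v\in V$ one has $k\succ v=l(T(k))v=0$ and $v\prec k=r(T(k))v=0$ at once, while $v\succ k$ and $k\prec v$ lie in $\ker T$ because $T(v\succ k)=T(v\ast k)-T(v\prec k)=T(v)\cdot 0-0=0$ and likewise $T(k\prec v)=T(k\ast v)-T(k\succ v)=0\cdot T(v)-0=0$. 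Consequently $\succ$ and $\prec$ descend to the quotient $V/\ker T\cong T(V)$, yielding the well-defined structure of Eq.~\eqref{eq:Ap}; its associated anti-flexible product is the restriction of $\cdot$ by the homomorphism property established above, and $T$ is a homomorphism of pre-anti-flexible algebras by construction. Apart from this quotient argument, the entire proof is a mechanical translation of~\eqref{eqbimodule1}, \eqref{eqbimodule2} and the $\mathcal O$-operator identity into operator form.
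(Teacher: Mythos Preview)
Your proof is correct and follows the same route as the paper's: you verify $(u,v,w)_{_m}=(w,v,u)_{_m}$ by reducing to the bimodule identity~\eqref{eqbimodule2} at $x=T(u),\,y=T(w)$, and $(u,v,w)_{_l}=(w,v,u)_{_r}$ by first using the $\mathcal O$-operator relation $T(u\ast v)=T(u)\cdot T(v)$ and then invoking~\eqref{eqbimodule1}, exactly as the paper does. Where you go beyond the paper is in the quotient argument: the paper simply writes ``The rest is straightforward'' and does not address the well-definedness of Eq.~\eqref{eq:Ap} when $T$ fails to be injective, whereas your verification that $\ker T$ is a two-sided ideal of $(V,\prec,\succ)$ (using $k\succ v=0$, $v\prec k=0$, and $T(v\succ k)=T(k\prec v)=0$ via the $\mathcal O$-operator identity) fills this gap cleanly and justifies transporting the pre-anti-flexible structure to $T(V)\cong V/\ker T$.
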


\begin{proof}
For all $u,v,w\in V$, we have
{\small\begin{eqnarray*}
(u,v, w)_{_m}&=&(u\succ v)\prec w-u\succ (v\prec w)=r(T(w))l(T(u))v-l(T(u))r(T(w))v\cr
&=& r(T(u))l(T(w))v-l(T(u))r(T(w))v=(w,v,u)_{_m},\\
(u,v, w)_{_l}&=&(u\succ v+u\prec v)\succ w-u\succ (v\succ w)=(l(T( l(T(u))v +r(T(v))u))-l(T(u))l(T(v)))w\cr
&=& (l(T(u)\cdot T(v))-l(T(u))l(T(v)))w=(r(T(u))r(T(v))-r(T(v)\cdot T(u))w\\
& = & (r(T(u))r(T(v))-r(T(u\succ v+u\prec v)))w =(w\prec v)\prec
u-w\prec(u\succ v+u\prec v )\cr &=&(w,v,u){_r}
\end{eqnarray*}}
Therefore, $(V,\prec, \succ)$ is a pre-anti-flexible algebra. For $T(V)$, we have
$$T(u)*T(v)=T(u\succ v+u\prec v)=T(u*v)=T(u)\cdot T(v),\;\;\forall u,v\in V.$$
The rest is straightforward.
\end{proof}

\begin{cor}\label{proposition_existence}
Let $(A,\cdot)$ be an anti-flexible algebra. Then there exists
a pre-anti-flexible algebras structure on $A$ such that its associated
anti-flexible algebra is $(A,\cdot)$ if and only if there
exists an invertible $\mathcal{O}$-operator.
\end{cor}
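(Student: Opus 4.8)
The plan is to read off both directions of the equivalence directly from the preceding theorem on $\mathcal{O}$-operators and from Corollary~\ref{cor:id}, so that essentially no fresh computation is required; the content lies entirely in tracking how invertibility lets one pass between a bimodule $V$ and the algebra $A$ itself. Throughout I take ``invertible $\mathcal{O}$-operator'' to mean an $\mathcal{O}$-operator $T:V\to A$ associated to some bimodule $(l,r,V)$ of $(A,\cdot)$ which is a linear isomorphism.

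For the backward direction I would start from such an invertible $T:V\to A$ and apply the previous theorem. That theorem equips $V$ with a pre-anti-flexible structure $(\prec,\succ)$, and it further asserts that the image $T(V)$ carries an induced pre-anti-flexible structure whose associated anti-flexible algebra is exactly the subalgebra structure that $T(V)$ inherits from $(A,\cdot)$. Because $T$ is invertible it is in particular surjective, so $T(V)=A$ and this induced structure already lives on all of $A$. Explicitly, for $x,y\in A$ one has $x\succ y=T\bigl(l(x)T^{-1}(y)\bigr)$ and $x\prec y=T\bigl(r(y)T^{-1}(x)\bigr)$, and the last assertion of the theorem guarantees that the associated anti-flexible algebra $(A,\ast)$ of this pre-anti-flexible structure coincides with $(A,\cdot)$ on the nose. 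This is precisely the pre-anti-flexible structure required.

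For the forward direction I would argue in the reverse order. Suppose $A$ already admits a pre-anti-flexible structure $(\prec,\succ)$ whose associated anti-flexible algebra (via Proposition~\ref{pro:asso}) is $(A,\cdot)$. Then Corollary~\ref{cor:id} states exactly that the identity map ${\rm id}:A\to A$ is an $\mathcal{O}$-operator of $(A,\cdot)$ associated to the bimodule $(L_\succ,R_\prec,A)$. Since ${\rm id}$ is trivially a linear isomorphism, it is the desired invertible $\mathcal{O}$-operator, and this direction is immediate.

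The only point needing a little care, rather than a genuine obstacle, is in the backward direction: one must verify that invertibility of $T$ transports the structure onto the \emph{whole} of $A$ with the \emph{correct} associated product, not merely onto a subalgebra or up to isomorphism. This is settled by combining the surjectivity of $T$ (giving $T(V)=A$) with the final clause of the previous theorem identifying the associated anti-flexible algebra on $T(V)$ with the inherited subalgebra structure of $(A,\cdot)$; when $T(V)=A$ that structure is literally $(A,\cdot)$.
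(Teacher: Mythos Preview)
Your proof is correct and follows essentially the same approach as the paper's own proof: both directions invoke the preceding theorem and Corollary~\ref{cor:id} exactly as you do, with the identity map serving as the invertible $\mathcal{O}$-operator in the forward direction and the surjectivity $T(V)=A$ used in the backward direction. The only cosmetic difference is that the paper explicitly writes out the one-line computation $x\succ y+x\prec y=T(l(x)T^{-1}(y)+r(y)T^{-1}(x))=x\cdot y$ to verify the associated product, whereas you appeal directly to the final clause of the preceding theorem; both are entirely valid.
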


\begin{proof}
Suppose that there exists an invertible $\mathcal{O}$-operator $T:V\rightarrow A$ associated to a bimodule $(l,r, V)$.
Then the products ``$\succ, \prec$" given by Eq.~(\ref{eq:Vp}) defines a pre-anti-flexible algebra structure on $V$. Moreover,
there is a pre-anti-flexible algebra structure on $T(V)=A$ given by Eq.~(\ref{eq:Ap}), that is,
$$x\succ y=T(l(x)T^{-1}(y)),\;\; x\prec y=T(r(y)T^{-1}(x)),\;\;\forall x,y\in A.$$
Moreover, for any $x,y\in A$, we have
$$x\succ y+x\prec y=T(l(x) T^{-1}(y)+r(y)T^{-1}(x))=T(T^{-1}(x))\cdot T(T^{-1}(y))=x\cdot y.$$
Hence the associated anti-flexible algebra of $(A,\succ,\prec)$ is $(A,\cdot)$.

Conversely, let $(A,\succ, \prec)$ be pre-anti-flexible algebra
such that its associated anti-flexible is $(A,\cdot)$. Then by
Corollary~\ref{cor:id}, the identity map ${\rm id}$ is an
$\mathcal O$-operator of $(A,\cdot)$ associated to the bimodule
$(L_\succ, R_\prec, A)$.
\end{proof}

\begin{cor}  Let $(A,\cdot)$ be an anti-flexible algebra and $\omega$ be a
nondegenerate skew-symmetric bilinear form satisfying
Eq.~\eqref{eq:Conn}. Then there exists a pre-anti-flexible algebra
structure $\succ,\prec$ on $A$ given by
\begin{equation}\label{eq:Conn-pre}\omega(x\succ y,z)=\omega(y, z\cdot x),\;\; \omega(x\prec
y,z)=\omega(x,y\cdot z),\;\; \forall x,y,z\in A,\end{equation}
such that the associated anti-flexible algebra is $(A,\cdot)$.
\end{cor}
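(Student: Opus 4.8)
The plan is to recognize this corollary as the bilinear-form incarnation of the invertible $\mathcal{O}$-operator construction, and to assemble it from three earlier results rather than verifying the pre-anti-flexible axioms by hand. Since $\omega$ is nondegenerate, it induces an isomorphism $A\to A^*$, $x\mapsto\omega(x,-)$, whose inverse I regard as an element $\mathrm{r}\in A\otimes A$ (finite-dimensionality is used here) acting as a map $\mathrm{r}:A^*\to A$ with $\omega(x,y)=\langle\mathrm{r}^{-1}x,y\rangle$. Skew-symmetry of $\omega$ translates into skew-symmetry of $\mathrm{r}$, so by the last theorem of Section~4 --- which characterizes nondegenerate skew-symmetric solutions of the anti-flexible Yang--Baxter equation exactly as those whose associated bilinear form satisfies Eq.~\eqref{eq:Conn} --- the hypothesis on $\omega$ forces $\mathrm{r}$ to be a skew-symmetric solution of Eq.~\eqref{eq_YBE}.

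Next I would feed this $\mathrm{r}$ into the operator picture. By the Example identifying skew-symmetric solutions of the anti-flexible Yang--Baxter equation with $\mathcal{O}$-operators, $\mathrm{r}:A^*\to A$ is an $\mathcal{O}$-operator associated to the bimodule $(R_\cdot^*,L_\cdot^*,A^*)$, and it is invertible because $\omega$ is nondegenerate. Corollary~\ref{proposition_existence} then applies verbatim: an invertible $\mathcal{O}$-operator $T=\mathrm{r}$ associated to $(l,r,V)=(R_\cdot^*,L_\cdot^*,A^*)$ yields a pre-anti-flexible algebra structure on $T(V)=A$, given (by the formula in the proof of that corollary) by
\[
x\succ y=\mathrm{r}\bigl(R_\cdot^*(x)\,\mathrm{r}^{-1}(y)\bigr),\qquad x\prec y=\mathrm{r}\bigl(L_\cdot^*(y)\,\mathrm{r}^{-1}(x)\bigr),
\]
whose associated anti-flexible algebra is precisely $(A,\cdot)$.

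It remains only to check that this transported structure agrees with the one defined by Eq.~\eqref{eq:Conn-pre}. Pairing against an arbitrary $z$ and using $\omega(x,y)=\langle\mathrm{r}^{-1}x,y\rangle$ together with the defining relations $\langle R_\cdot^*(x)u^*,z\rangle=\langle u^*,z\cdot x\rangle$ and $\langle L_\cdot^*(y)u^*,z\rangle=\langle u^*,y\cdot z\rangle$, I expect to find $\omega(x\succ y,z)=\langle R_\cdot^*(x)\mathrm{r}^{-1}(y),z\rangle=\omega(y,z\cdot x)$ and $\omega(x\prec y,z)=\langle L_\cdot^*(y)\mathrm{r}^{-1}(x),z\rangle=\omega(x,y\cdot z)$, which are exactly Eq.~\eqref{eq:Conn-pre}; nondegeneracy of $\omega$ then shows the two structures coincide. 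The only real subtlety --- and the step I would be most careful about --- is the bookkeeping in the bimodule $(R_\cdot^*,L_\cdot^*,A^*)$: one must correctly read off that the left action $l$ is $R_\cdot^*$ and the right action $r$ is $L_\cdot^*$, so that the $T(l(x)T^{-1}(y))$ / $T(r(y)T^{-1}(x))$ template from Corollary~\ref{proposition_existence} produces $\succ$ from $R_\cdot^*$ and $\prec$ from $L_\cdot^*$ and not the reverse. Everything else is a routine application of the dual-pairing definitions.
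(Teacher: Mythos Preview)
Your proposal is correct and follows essentially the same route as the paper: both define the isomorphism $T:A\to A^*$ via $\omega$, recognize $T^{-1}$ as an invertible $\mathcal{O}$-operator associated to the bimodule $(R_\cdot^*,L_\cdot^*,A^*)$, invoke Corollary~\ref{proposition_existence}, and then unwind the formulas $x\succ y=T^{-1}R_\cdot^*(x)T(y)$, $x\prec y=T^{-1}L_\cdot^*(y)T(x)$ to recover Eq.~\eqref{eq:Conn-pre}. The only difference is cosmetic: the paper simply asserts that $T^{-1}$ is an $\mathcal{O}$-operator, whereas you supply the justification by passing through the last theorem of Section~4 and the Example identifying skew-symmetric AFYBE solutions with $\mathcal{O}$-operators for $(R_\cdot^*,L_\cdot^*,A^*)$---a legitimate (if slightly roundabout) way to fill that gap.
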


\begin{proof} Define a linear map $T:A\rightarrow A^*$ by
$$\langle T(x),y\rangle=\omega (x,y),\;\;\forall x,y\in A.$$ Then $T$
is invertible and $T^{-1}$ is an $\mathcal O$-operator of the
anti-flexible  algebra $(A,\cdot)$ associated to the bimodule
$(R^*_{\cdot},L^*_{\cdot}, A^*)$. By Corollary~
\ref{proposition_existence}, there is a pre-anti-flexible
 algebra structure $\succ,\prec$ on $(A,*)$ given by
$$x\succ y=T^{-1}R^*(x)T(y),\;\;x\prec y=T^{-1}L^*(y)T(x),\;\;\forall
x,y\in A,$$ which gives exactly Eq.~(\ref{eq:Conn-pre}) such that the associated anti-flexible algebra is $(A,\cdot)$.
\end{proof}

Finally we give the following construction of skew-symmetric solutions of
anti-flexible Yang-Baxter equation (hence anti-flexible bialgebras) from a pre-anti-flexible algebra.

\begin{pro}
 Let $(A,\succ,\prec)$ be a pre-anti-flexible algebra.
Then
\begin{equation}
\mathrm{r}=\sum_{i}^n (e_i\otimes e_i^*-e_i^*\otimes e_i)
\end{equation}
is a solution of anti-flexible Yang-Baxter equation in $A
\ltimes_{R_\prec^*,L_\succ^*} A^*$, where $\{e_1,\cdots, e_n\}$ is a
basis of $A$ and $\{e_1^*,\cdots, e_n^*\}$ is its dual basis.
\end{pro}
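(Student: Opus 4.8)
The plan is to obtain $\mathrm{r}$ directly from Theorem~\ref{thm:cfromO}, applied to the $\mathcal{O}$-operator furnished by Corollary~\ref{cor:id}, so that the proposition is essentially a corollary of the two preceding results. First I would pass from the pre-anti-flexible algebra $(A,\succ,\prec)$ to its associated anti-flexible algebra $(A,\ast)$ and to the bimodule $(L_\succ,R_\prec,A)$ of $(A,\ast)$ established just above. By Corollary~\ref{cor:id}, the identity map $\id:A\rightarrow A$ is an $\mathcal{O}$-operator of $(A,\ast)$ associated to this bimodule, which is the input required by Theorem~\ref{thm:cfromO}.

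Next I would make explicit the identification of a linear map with a tensor that Theorem~\ref{thm:cfromO} invokes. Under the canonical isomorphism $\Hom(A,A)\cong A\otimes A^*$, the identity map corresponds to $T=\sum_{i=1}^n e_i\otimes e_i^*$, which we regard as an element of $(A\ltimes A^*)\otimes(A\ltimes A^*)$. Applying the flip gives $\sigma(T)=\sum_{i=1}^n e_i^*\otimes e_i$, so $T-\sigma(T)=\sum_{i=1}^n(e_i\otimes e_i^*-e_i^*\otimes e_i)$ is precisely the element $\mathrm{r}$ in the statement.

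Finally I would invoke Theorem~\ref{thm:cfromO} with $(l,r,V)=(L_\succ,R_\prec,A)$ and $T=\id$. Since $T$ is an $\mathcal{O}$-operator, the theorem yields that $\mathrm{r}=T-\sigma(T)$ is a skew-symmetric solution of the anti-flexible Yang--Baxter equation in $A\ltimes_{r^*,l^*}V^*=A\ltimes_{R_\prec^*,L_\succ^*}A^*$, which is exactly the asserted conclusion. The argument involves no nontrivial computation; the only points requiring care are bookkeeping with the conventions, namely confirming that with $l=L_\succ$ and $r=R_\prec$ the theorem produces the semi-direct product with subscripts in the order $A\ltimes_{R_\prec^*,L_\succ^*}A^*$ (rather than the reversed $A\ltimes_{L_\succ^*,R_\prec^*}A^*$), and that the tensor representing $\id$ is $\sum_i e_i\otimes e_i^*$ and not its flip. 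I expect no genuine obstacle beyond this matching of orderings.
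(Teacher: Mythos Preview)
Your proposal is correct and follows exactly the paper's approach: the paper's proof simply notes that $\id=\sum_{i=1}^n e_i\otimes e_i^*$ and then invokes Theorem~\ref{thm:cfromO} together with Corollary~\ref{cor:id}. Your write-up is just a more detailed unpacking of the same two-line argument, including the bookkeeping checks on the subscript order that the paper leaves implicit.
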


\begin{proof} Note that the identity map ${\rm id}=\sum\limits_{i=1}^n e_i\otimes e_i^*$. Hence the conclusion follows from
Theorem~\ref{thm:cfromO} and Corollary~\ref{cor:id}.
\end{proof}

\bigskip

\noindent
{\bf Acknowledgements.}  This work is supported by
NSFC (11931009).  C. Bai is also
supported by the Fundamental Research Funds for the Central
Universities and Nankai ZhiDe Foundation.

\end{document}